\newdimen\bibspace
\renewenvironment{thebibliography}[1]{%
 \section*{\refname 
       \@mkboth{\MakeUppercase\refname}{\MakeUppercase\refname}}%
     \list{\@biblabel{\@arabic\c@enumiv}}%
          {\settowidth\labelwidth{\@biblabel{#1}}%
           \leftmargin\labelwidth
           \advance\leftmargin\labelsep
           \itemsep\bibspace
           \parsep\z@skip     %
           \@openbib@code
           \usecounter{enumiv}%
           \let\p@enumiv\@empty
           \renewcommand\theenumiv{\@arabic\c@enumiv}}%
     \sloppy\clubpenalty4000\widowpenalty4000%
     \sfcode`\.\@m}
    {\def\@noitemerr
      {\@latex@warning{Empty `thebibliography' environment}}%
     \endlist}
\newtheorem{thm}{Theorem}[section]
\newtheorem{lem}{Lemma}[section]
\newtheorem{prop}{Proposition}[section]
\newtheorem{cor}{Corollary}[section]
\newtheorem{rem}{Remark}[section]
\def\XXint#1#2#3{{\setbox0=\hbox{$#1{#2#3}{\int}$}
  \vcenter{\hbox{$#2#3$}}\kern-.5\wd0}}
\newcommand{\al}{\alpha}                \newcommand{\lda}{\lambda}
\newcommand{\om}{\Omega}                \newcommand{\pa}{\partial}
\newcommand{\va}{\varepsilon}           \newcommand{\ud}{\mathrm{d}}
\newcommand{\be}{\begin{equation}}      \newcommand{\ee}{\end{equation}}
\newcommand{\R}{\mathbb{R}}
\begin{document}

\title{\textbf{Solutions of some Monge-Amp\`ere equations with \\isolated and line singularities}
\bigskip}

\author{\medskip Tianling Jin \  and \
Jingang Xiong}

\date{\today}

\maketitle

\begin{abstract}
In this paper, we study existence, regularity, classification, and asymptotical behaviors of solutions of some Monge-Amp\`ere equations with isolated and line singularities. We classify all solutions of $\det \nabla^2 u=1$ in $\R^n$ with one puncture point. This can be applied to characterize ellipsoids, in the same spirit of Serrin's overdetermined
problem for the Laplace operator. In the case of having $k$ non-removable singular points for $k>1$, modulo affine equivalence the set of all generalized solutions can be identified as an explicit orbifold of finite dimension. We also establish existence of global solutions with general singular sets, regularity properties, and optimal estimates of the second order derivatives of generalized solutions near the singularity consisting of a point or a straight line. The geometric motivation comes from singular semi-flat Calabi-Yau metrics.
\end{abstract}

\tableofcontents

\section{Introduction}

We say two Lebesgue measurable functions $u_1, u_2:\R^n\to \R$ are \emph{affine equivalent} if there exists
an $n\times n$ matrix $A$ with $\det A\neq 0$, $b=(b_1,\cdots, b_n)^{t}$ and a linear function $\ell(x)$ such that
$u_1(x)=(\det A)^{-2/n}u_2(Ax+b)-\ell(x)$ a.e. in $\R^n$; If $\det A=1$,  we say $u_1, u_2$ are \emph{unimodular affine equivalent}.

A celebrated theorem in the Monge-Amp\`ere equation theory asserts: \emph{Modulo the unimodular affine equivalence, $\frac{1}{2}|x|^2$ is the unique convex solution of}
\[
\det \nabla^2 u=1\quad\mbox{in }\R^n.
\]
The theorem was first proved by J\"orgens \cite{J} in dimension two using complex analysis methods.
An elementary and simpler proof, which also uses complex analysis, was later
given by Nitsche \cite{N}. J\"orgens' theorem was extended to smooth convex solutions in higher dimensions by Calabi \cite{C} for $n\le 5$ and by Pogorelov \cite{P} for all dimensions.  Another proof was given by Cheng and Yau \cite{CY}  along the lines of affine
geometry. Note that any local
generalized (or Alexandrov) solution of $\det\nabla^2u=1$ in dimension two is smooth, but this is
false in dimension $n\geq 3$. Caffarelli \cite{Caffarelli}
(see also Caffarelli-Li \cite{CL}) established J\"orgens-Calabi-Pogorelov
theorem for generalized solutions (or viscosity solutions). 
Trudinger-Wang \cite{TW2} proved that the only convex open subset $\om$ of $\R^n$ which admits a convex $C^2$ solution of $\det\nabla^2 u=1$ in $\om$ with $\lim_{x\to\pa\om}u(x)=\infty$ is $\om=\R^n$. Caffarelli-Li \cite{CL} established the asymptotical behaviors of viscosity solutions of $\det\nabla^2 u=1$ outside of a bounded convex subset of $\R^n$ for $n\ge 2$ (the case $n=2$ was studied before in Ferrer-Mart\'inez-Mil\'an \cite{FMM2, FMM1} using complex analysis), from which the J\"orgens-Calabi-Pogorelov theorem follows. Recently, we also gave a proof of J\"orgens' theorem (for $n=2$) in \cite{JX1} without using complex analysis, which allows us to obtain such Liouville type theorems for solutions of some degenerate Monge-Amp\`ere equations. 

In the language of affine differential geometry, the above theorem asserts that every convex improper affine hypersurface is an elliptic paraboloid. It is of interest to study affine hypersurfaces with singularities, from which part of this work is motivated.

In the paper \cite{J2},  J\"orgens showed that, modulo the unimodular affine equivalence, every smooth locally convex solution
of
\[
\det \nabla^2 u=1\quad\mbox{in }\R^2\setminus\{0\}
\]
has to be
\[
u_c=\int_0^{|x|}(\tau^2+c)^{\frac{1}{2}}\,\ud \tau
\]
for some $c\geq 0$.  One can check that $0$ is non-removable singular point of $u_c$ if and only if $c>0$.

In this paper, we would like to extend this J\"orgens' theorem to higher dimensions, explore the space of solutions in the case of containing multiple singular points, discuss the existence of global solutions with measure data, and study regularity properties and asymptotical behaviors of solutions of Dirichlet problems with isolated and line singularities. 

Recall that (see, e.g., \cite{G} and \cite{TW1}) for an open subset $\om$ of $\R^n$ and a Borel measure $\nu$ defined in $\om$, we say $u$ is a generalized solution, or Alexandrov solution, of the Monge-Amp\`ere equation
\[
\det \nabla^2 u=\nu \quad \mbox{in }\om,
\]
if $u$ is a locally convex function in $\om$ and the Monge-Amp\`ere measure associated with $u$ equals to $\nu$. Throughout the paper, we assume the dimension $n\ge 3$ without otherwise stated.

\begin{thm}\label{thm a}
Let $u$ be a  generalized solution of
\be\label{eq:global}
\det\nabla^2 u=1\quad\mbox{in }\R^n\setminus\{0\}.
\ee
Then $u$ is unimodular affine equivalent to
\[
\int_0^{|x|}(\tau^n+c)^{\frac{1}{n}}\,\ud \tau
\]
for some $c\geq 0$.
\end{thm}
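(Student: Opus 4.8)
The plan is to normalize $u$ at infinity, prove that it is radially symmetric about the origin, and then integrate an ordinary differential equation.

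First, by Caffarelli's regularity theory $u$ is smooth and strictly convex in $\R^n\setminus\{0\}$ (an affine piece in the graph over a point $x\neq0$ would force $\det\nabla^2u(x)=0$). By the Caffarelli--Li asymptotic expansion for solutions of $\det\nabla^2u=1$ outside a compact set --- which is where $n\ge3$ is used --- there are a symmetric positive definite $A$ with $\det A=1$, a vector $b$ and a constant such that $u(x)=\tfrac12 x^{t}Ax+b\cdot x+\mathrm{const}+O(\abs{x}^{2-n})$ as $\abs{x}\to\infty$. Writing $A=B^{t}B$ with $\det B=1$ and replacing $u$ by $u(B^{-1}x)$ minus the corresponding affine function --- a unimodular affine change fixing the singular point --- we may assume
\[
u(x)=\tfrac12\abs{x}^2+w(x),\qquad w(x)=O(\abs{x}^{2-n})\ \text{ as }\abs{x}\to\infty .
\]
Since $u$ is bounded near $0$, it extends to a convex function on $\R^n$, still called $u$, with $\det\nabla^2u=\ud x+m\,\delta_0$ for some $m=\abs{\pa u(0)}\ge0$ (the only measure carried by $\{0\}$ being a multiple of $\delta_0$). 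If $m=0$ then $u$ is a global smooth convex solution in $\R^n$, so $u=\tfrac12\abs{x}^2$ by the J\"orgens--Calabi--Pogorelov theorem (the case $c=0$); hence assume $m>0$.

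The heart of the argument is radial symmetry, which I would obtain by the method of moving planes. Fix a unit vector $e$; for $\lambda>0$ let $T_\lambda=\{x\cdot e=\lambda\}$, $\Sigma_\lambda=\{x\cdot e>\lambda\}$, let $\sigma_\lambda$ be the reflection across $T_\lambda$, and set $u_\lambda=u\circ\sigma_\lambda$, again convex. The crucial observation is that for $\lambda>0$ the singular point $0$ lies outside $\Sigma_\lambda$ while $\sigma_\lambda(0)$ lies inside it, so $\det\nabla^2u_\lambda=\ud x+m\,\delta_{\sigma_\lambda(0)}\ge\ud x=\det\nabla^2u$ in $\Sigma_\lambda$. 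Also $u_\lambda=u$ on $T_\lambda$, and $u_\lambda-u=O(R^{2-n})$ on $\pa B_R$ because $\abs{\sigma_\lambda x}\le\abs{x}$ for $x\in\Sigma_\lambda$ and $w=O(\abs{x}^{2-n})$. By Alexandrov's comparison principle applied to $u+\va_R$ and $u_\lambda$ on $\Sigma_\lambda\cap B_R$, with $\va_R=O(R^{2-n})$, and letting $R\to\infty$, one gets $u_\lambda\le u$ in $\Sigma_\lambda$ for every $\lambda>0$. Letting $\lambda\downarrow0$, and running the same argument from $\{x\cdot e<0\}$, gives $u\circ\sigma_0\le u$ on $\R^n$ with $\sigma_0$ the reflection across $\{x\cdot e=0\}$; applied to $\sigma_0(x)$ this forces $u=u\circ\sigma_0$. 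Since $e$ is arbitrary, $u$ is radial.

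I expect this symmetrization to be the main obstacle: one must use the $O(\abs{x}^{2-n})$ decay to run the comparison "from infinity" (hence $n\ge3$) and apply the comparison principle in a region whose closure meets the singular point --- what makes it work is that the extra mass $m\,\delta_{\sigma_\lambda(0)}$ inside $\Sigma_\lambda$ renders the comparison one-sided, so no iteration on the plane position is needed. (In Legendre-dual variables this symmetry is exactly the statement that an overdetermined exterior Monge--Amp\`ere problem on the complement of the convex body $\pa u(0)$ admits only balls, in the spirit of Serrin's theorem.) Finally, with $u=f(\abs{x})$ radial, the eigenvalues of $\nabla^2u$ at $x\neq0$ are $f''(r)$ (simple) and $f'(r)/r$ (multiplicity $n-1$), so $\det\nabla^2u=1$ reads $f''(r)\,(f'(r)/r)^{n-1}=1$, i.e. $\tfrac1n\big((f')^{n}\big)'=r^{n-1}$ for $r>0$; hence $f'(r)=(r^{n}+c)^{1/n}$ with $c\ge0$ (since $f'\ge0$ near $0$ by convexity, with $f'(0^+)=c^{1/n}$ and $m=\abs{B_1}\,c$), so $u(x)=\int_0^{\abs{x}}(\tau^{n}+c)^{1/n}\,\ud\tau+\mathrm{const}$. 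Absorbing the constant into the affine equivalence gives the asserted form, consistent with the normalization since $\int_0^{\abs{x}}(\tau^{n}+c)^{1/n}\,\ud\tau=\tfrac12\abs{x}^2+\mathrm{const}+O(\abs{x}^{2-n})$.
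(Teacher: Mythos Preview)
Your proposal is correct, but it takes a genuinely different route from the paper's proof.

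The paper does \emph{not} use moving planes. After the same normalization $u(x)=\tfrac12|x|^2+O(|x|^{2-n})$, it first observes (by comparison with $\tfrac12|x|^2$) that $u(0)\le 0$, then chooses $c\ge 0$ so that the explicit radial function
\[
v(x)=\int_0^{|x|}(\tau^n+c)^{1/n}\,\ud\tau + u(0)
\]
matches both the value at the origin and the constant at infinity, i.e.\ $v(0)=u(0)$ and $|x|^{n-2}|u-v|$ stays bounded. Since $\det\nabla^2 u=\det\nabla^2 v=1$ in $\R^n\setminus\{0\}$, the difference $w=u-v$ satisfies a linear uniformly elliptic equation $a_{ij}w_{ij}=0$ there, with $w(0)=0$ and $w(x)\to 0$ as $|x|\to\infty$; the maximum principle forces $w\equiv 0$. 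Thus the radial form is obtained by direct comparison with the candidate solution, not by first proving symmetry.

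Your approach establishes radial symmetry via reflections and then solves the ODE. The argument is sound; the key point---that for each $\lambda>0$ the reflected function carries the extra mass $m\,\delta_{\sigma_\lambda(0)}$ inside $\Sigma_\lambda$, so $u_\lambda\le u$ follows from a single comparison with no sliding---is a pleasant simplification of the usual moving-planes machinery. One small imprecision: on $\partial B_R\cap\Sigma_\lambda$ the difference $u_\lambda-u$ is not $O(R^{2-n})$ two-sidedly, because the quadratic contribution $\tfrac12(|\sigma_\lambda x|^2-|x|^2)=2\lambda(\lambda-x\cdot e)$ is of order $R$ (and nonpositive); what you actually have, and all you need, is the one-sided bound $u_\lambda-u\le CR^{2-n}$.

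In short, the paper's method is shorter and avoids any symmetry argument by writing down the answer and comparing; your method is more structural, explains \emph{why} the solution is radial, and would adapt more readily to situations where an explicit comparison solution is not at hand. Your closing remark about the Legendre transform and Serrin-type overdetermined problems is exactly the content of the paper's Theorem~\ref{thm:e}.
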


From the proof of Theorem \ref{thm a}, $u$ in fact belongs to $C^{0,1}_{loc}(\R^n)$, and the constant $c=\frac{1}{\omega_n}|\pa u(0)|_{\mathcal{H}^n}$, where $\omega_n$ is the volume of $n$-dimensional unit ball, $\pa u(0)$ is the set  of the subgradients of $u$ at $0$ (see \cite{G}) and $|\cdot|_{\mathcal{H}^n}$ is the $n$-dimensional Lebesgue measure. Modulo the scaling $\bar{u}(x)=c^{-2/n}u(c^{1/n}x)$ for $c>0$, it follows from Theorem \ref{thm a} that in fact we have only two solutions of \eqref{eq:global}:
\[
\frac12 |x|^2 \quad \mbox{and} \quad \int_0^{|x|}(\tau^n+1)^{\frac{1}{n}}\,\ud \tau.
\]

Let us consider the case of $k$ puncture points for $k>1$. Let $u$ be a generalized solution of
\be\label{eq:many singularity1}
\det\nabla^2 u=1\quad \mbox{in } \R^n\setminus\{P_1,\cdots, P_k \}
\ee
for some distinct points $P_1,\cdots, P_k$ in $\R^n$. We will see from Proposition \ref{prop:across} in the next section that $u$ can be uniquely extended to be a convex function (still denoted as $u$) in $\R^n$, and thus $u$ is a generalized solution of
\be\label{eq:a1}
\det\nabla^2 u=1+\sum_{i=1}^k a_i \delta_{P_i}\quad \mbox{in } \R^n,
\ee
where $a_i=|\pa u(P_i)|_{\mathcal{H}^n}$, $\delta_{P_i}$ is the delta measure centered at $P_i$, $1\le i\le k$. We say that $P_i$ is a non-removable singular point of \eqref{eq:many singularity1} or \eqref{eq:a1} if $|\pa u(P_i)|_{\mathcal{H}^n}\neq 0$. If $|\pa u(P_i)|_{\mathcal{H}^n}= 0$, then $P_i$ is a removable singular point in the Alexandrov sense. For removable singularities of classical solutions of Monge-Amp\`ere equations, we refer to \cite{J2, B, SW}.

\begin{thm} \label{them:elementary1} Modulo the affine equivalence, the set of all generalized solutions of \eqref{eq:many singularity1} with $k$ distinct non-removable singular points can be identified as an orbifold of dimension $d_{n,k}$ , where
\be\label{eq:orbifold dim} 
d_{n,k}=
\begin{cases}
\frac{(k-1)(k+2)}{2},\quad\mbox{if}\quad k-1\le n,\\
(k-1)(n+1)-\frac{n(n-1)}{2},\quad\mbox{if}\quad k-1> n.
\end{cases}
\ee
Moreover, when $n=3$ or $4$, every generalized solutions of \eqref{eq:many singularity1} is smooth away from the set of line segments each of which connects two singular points.
\end{thm}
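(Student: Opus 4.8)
\emph{Reduction to a normal form at infinity.} By Proposition~\ref{prop:across}, a generalized solution $u$ of \eqref{eq:many singularity1} with $k$ non-removable singularities extends to a convex generalized solution of \eqref{eq:a1} in $\R^n$ with all $a_i>0$, and conversely every such function restricts to a solution of \eqref{eq:many singularity1}; so it suffices to describe convex $u$ on $\R^n$ with $\det\nabla^2 u=1+\sum_{i=1}^k a_i\delta_{P_i}$, $a_i>0$, $P_i$ distinct. Since $\det\nabla^2 u=1$ in the Alexandrov sense outside a large ball, the Caffarelli--Li classification of exterior solutions (valid for $n\ge3$) furnishes $A$ symmetric positive definite with $\det A=1$, $b\in\R^n$ and $c\in\R$ with $u(x)=\frac12(x-b)^{t}A(x-b)+c+O(|x|^{2-n})$ as $|x|\to\infty$. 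Replacing $u$ by the affine-equivalent function built from $A^{-1/2}$, $b$, and the subtraction of the resulting linear function, I may assume $u(x)=\frac12|x|^2+O(|x|^{2-n})$. The affine equivalences preserving this normalization are precisely $x\mapsto sOx+\xi$, $s>0$, $O\in O(n)$, $\xi\in\R^n$, each followed by subtraction of a uniquely determined linear function; this group $\mathcal G$, of dimension $\frac{n(n+1)}2+1$, acts on the singular datum by $P_i\mapsto s^{-1}O^{t}(P_i-\xi)$ and $a_i\mapsto s^{-n}a_i$.

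\emph{Bijection with the singular datum.} The key analytic input is that a normalized $u$ is determined by its measure $\sum a_i\delta_{P_i}$. Indeed, if $u_1,u_2$ are both normalized with the same measure, then on every ball $B_R$ they share the same Monge--Amp\`ere measure while $\sup_{\partial B_R}|u_1-u_2|\to0$, so the comparison principle forces $u_1\equiv u_2$. Conversely, for each choice of distinct $P_1,\dots,P_k$ and $a_1,\dots,a_k>0$ a normalized solution exists; I would obtain it by solving the Dirichlet problems $\det\nabla^2 u_R=1+\sum a_i\delta_{P_i}$ in $B_R$ with boundary value $\frac12|x|^2$, using that $u_R$ decreases in $R$ and comparing with suitable barriers to pass to the limit and to verify the expansion at infinity (this existence is among the results of the paper). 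Consequently the set of solutions of \eqref{eq:many singularity1} modulo affine equivalence is in bijection with the quotient of $\mathcal F_k(\R^n)\times(0,\infty)^k$ by $\mathcal G\times S_k$, where $\mathcal F_k(\R^n)$ is the space of ordered $k$-tuples of distinct points of $\R^n$ and $S_k$ permutes labels.

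\emph{Dimension and orbifold structure.} The action of $\mathcal G\times S_k$ is proper (for $k\ge2$ the diameter of the configuration controls the scaling factor, and $O(n)$ and $S_k$ are compact). A configuration of $k$ distinct points spans an affine subspace of dimension $\min(k-1,n)$, so the stabilizer in $\mathcal G$ of a generic configuration is the orthogonal group $O(n-k+1)$ of rotations of the orthogonal complement of its span. If $k-1>n$ this complement is trivial, so $\mathcal G$ acts freely, the quotient by $\mathcal G$ is a manifold, and dividing by $S_k$ gives an orbifold of dimension $nk+k-\dim\mathcal G=(k-1)(n+1)-\frac{n(n-1)}2$. If $k-1\le n$, the subgroup $O(n-k+1)$ fixes both the configuration and the weights, hence by the uniqueness above acts trivially on the associated solutions; after discarding this ineffective kernel one works with configurations inside $\R^{k-1}$ modulo $O(k-1)\ltimes\R^{k-1}$ together with the scaling and $S_k$ actions, whose generic stabilizers are finite, and a dimension count gives $nk+k-\dim\mathcal G+\dim O(n-k+1)=\frac{(k-1)(k+2)}2$. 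The main obstacle is exactly the construction of honest orbifold charts: one must invoke the slice theorem and check finiteness of the effective isotropy groups, with special care near degenerate configurations whose points lie in a lower-dimensional affine subspace and near symmetric configurations with equal weights; this span dichotomy is what produces the two cases in $d_{n,k}$.

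\emph{Smoothness for $n=3,4$.} For the last assertion I again pass to the extension of $u$ to $\R^n$. Away from $\{P_1,\dots,P_k\}$ the equation is $\det\nabla^2 u=1$, bounded above and below by positive constants, so Caffarelli's localization theorem applies: any maximal convex set $S$ on which $u$ agrees with a supporting linear function and with $\dim S\ge1$ satisfies $\dim S<n/2$, and, since $u$ is strictly convex near infinity (its Hessian tending to the identity by the normalization) while the $P_i$ are finitely many, $S$ is a bounded segment with both endpoints in $\{P_1,\dots,P_k\}$. For $n=3$ or $4$ the bound $\dim S<n/2$ forces $\dim S\le1$, so every such $S$ is a segment joining two of the $P_i$. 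Thus $u$ is strictly convex on the complement of $\bigcup_{i<j}\overline{P_iP_j}$, where $\det\nabla^2 u=1$ is smooth and positive, and Caffarelli's regularity theory (interior $C^{1,\alpha}$ from strict convexity and a right-hand side bounded between positive constants, then $C^{\infty}$ by Schauder bootstrapping) gives $u\in C^{\infty}$ there.
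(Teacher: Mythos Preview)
Your proof is correct and follows essentially the same strategy as the paper: normalize at infinity via Caffarelli--Li, establish a bijection between normalized solutions and the data $(P_i,a_i)$ (existence from Proposition~\ref{prop:to the main result}, uniqueness from comparison), and then count dimensions of the quotient; for regularity, invoke Caffarelli's results \cite{Caffarelli90,Caffarelli93} to force any nontrivial contact set to be a segment with endpoints among the $P_i$ when $n\le 4$.

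The presentational differences are minor but worth noting. For the orbifold part, the paper normalizes further---translating one singular point to the origin and scaling its weight to $1$---so that the residual symmetry is exactly $O(n)\times S_{k-1}$ acting on $(\R^+)^{k-1}\times\mathrm{conf}(k-1,n)$; your version keeps all $k$ points and weights and quotients by the full similarity group $\mathcal G\times S_k$, which requires you to identify and discard the ineffective $O(n-k+1)$ kernel by hand. The dimension count is the same either way. For the regularity part, the paper first restricts to a large ball (where $u$ is smooth on the boundary by \cite{CL}) and then applies its Theorem~\ref{thm:strict convexity}, ruling out contact sets that reach $\partial\Omega$ via the boundary barrier Lemma~\ref{lem:strict convex}; you instead use strict convexity at infinity directly to bound the contact set, and then apply \cite{Caffarelli90} locally to force its endpoints into $\{P_i\}$. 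Both routes are valid; yours is marginally more direct for the global problem, while the paper's has the advantage of simultaneously yielding the bounded-domain statement Theorem~\ref{thm:strict convexity}.
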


The orbifold in Theorem \ref{them:elementary1} will be given explicitly in the proof of Corollary \ref{cor:multip point}. When $n=2$, Theorem \ref{them:elementary1} was proved by G\'alvez, Mart\'inez and Mira \cite{GMM} using one complex variable methods, and it follows from two dimensional Monge-Amp\`ere equation theory that the solutions are smooth away from the set of the singular points. In general, we know from \cite{CL} that every solution of \eqref{eq:many singularity1} for $n\ge 3$ is strictly convex (and thus, smooth) outside the convex hull of $\{P_1,\cdots, P_k\}$. 

We are also interested in seeking global solutions of Monge-Amp\`ere equation with more general singular sets than isolated points.
The existence of such solutions follows from the next theorem, which shows existence of global solutions of Monge-Amp\`ere equations with measure data.  In the rest of the paper, we denote $\mathcal{A}$ as the set of real $n\times n$ positive definite matrices with determinant $1$, and $B_r$ as the ball in $\R^n$ centered at $0$ of radius $r$. 

\begin{thm}\label{existenceY}
Let $\mu$ be a locally finite Borel measure such that the support of $(\mu-1)$ is bounded. Then for every $c\in \R, b\in\R^n, A\in\mathcal{A}$, there exists a unique convex Alexandrov solution of
\be\label{eq:1+mu}
\det \nabla^2 u=\mu\quad\mbox{in }\R^n
\ee
satisfying
\[\lim_{|x|\to +\infty}|E(x)|=0,\]
where $E(x)=u(x)-(\frac 12 x^T Ax+b\cdot x+c)$.
\end{thm}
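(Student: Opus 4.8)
The plan is to solve the problem by an exhaustion argument, constructing the solution on larger and larger balls with the prescribed second-order asymptotic behavior imposed on the boundary, and then passing to the limit. First I would reduce to the normalized case $A = I$, $b = 0$, $c = 0$: replacing $u(x)$ by $(\det A)^{-2/n} u(A^{1/2}x) - b\cdot x - c$ turns the model quadratic into $\frac12|x|^2$ while only changing $\mu$ by a pushforward that is still locally finite and still equal to the Lebesgue measure outside a bounded set (after the affine change of variables; note $\det A = 1$ so the Lebesgue part is preserved exactly). So it suffices to find a unique convex Alexandrov solution of $\det\nabla^2 u = \mu$ in $\R^n$ with $u(x) - \frac12|x|^2 \to 0$ as $|x|\to\infty$.

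For existence, fix $R$ large enough that $\mathrm{supp}(\mu - 1) \subset B_R$. For each $\rho > R$, consider the Dirichlet problem $\det\nabla^2 u_\rho = \mu$ in $B_\rho$ with boundary data $u_\rho = \frac12 \rho^2$ on $\partial B_\rho$; by the classical solvability of the Dirichlet problem for the Monge--Amp\`ere equation with a nonnegative finite Borel measure on a bounded convex domain (Alexandrov's theorem, see \cite{G} or \cite{TW1}), this has a unique convex Alexandrov solution $u_\rho$. The key is to get uniform estimates: since $\mu \geq 1$ in $B_\rho$, comparison with the solution of $\det\nabla^2 v = 1$ with the same boundary data, which is $v_\rho(x) = \frac12|x|^2$ itself, gives $u_\rho \le \frac12|x|^2$; and a barrier constructed from the radial solution of Theorem 1.1 with the appropriate mass $c$ (or simply from $\frac12|x|^2$ shifted down by a constant depending on the total mass $\mu(B_R) - \omega_n R^n$) gives a lower bound $u_\rho \geq \frac12|x|^2 - C_R$ on, say, $B_{2R}$, and more precisely a lower bound that matches $\frac12|x|^2$ in the limit $\rho\to\infty$. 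Monotonicity in $\rho$ (again by comparison, since enlarging the domain with the quadratic boundary data only lowers the solution) then shows $u_\rho$ decreases as $\rho$ increases and is bounded below, so $u := \lim_{\rho\to\infty} u_\rho$ exists, is convex, and by weak continuity of Monge--Amp\`ere measures under local uniform convergence solves $\det\nabla^2 u = \mu$ in $\R^n$. The asymptotic decay $u(x) - \frac12|x|^2 \to 0$ follows by squeezing: from above $u \le \frac12|x|^2$ everywhere, and from below one shows, for $|x| = r$ large, that $u(x) \ge \frac12 r^2 - o(1)$ by comparing on the annulus $B_{2r}\setminus B_R$ with the solution of $\det\nabla^2 w = 1$ there having boundary values $\frac12|x|^2$ on the outer sphere and the known uniform lower bound on the inner sphere, and using that this comparison function converges to $\frac12|x|^2$; equivalently, invoke the Caffarelli--Li asymptotic expansion \cite{CL} applied to $u$ outside $B_R$.

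For uniqueness, suppose $u_1, u_2$ are two convex Alexandrov solutions with $E_i(x) = u_i(x) - \frac12|x|^2 \to 0$. Both have the same Monge--Amp\`ere measure $\mu$ in $\R^n$, so the standard comparison principle (\cite{G}, Theorem 1.4.6, or \cite{TW1}) applied on each ball $B_\rho$ gives $\max_{\overline{B_\rho}}(u_1 - u_2) = \max_{\partial B_\rho}(u_1 - u_2) = \max_{\partial B_\rho}(E_1 - E_2) \to 0$ as $\rho \to \infty$, hence $u_1 \le u_2$; by symmetry $u_1 = u_2$.

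I expect the main obstacle to be the lower barrier and the sharp asymptotic decay rather than the soft compactness: one must produce an explicit subsolution whose Monge--Amp\`ere mass dominates $\mu$ on $B_\rho$ (so that it lies below $u_\rho$ by comparison) yet which converges to $\frac12|x|^2$ as $\rho\to\infty$, and this requires carefully tracking how the finite point-mass-type perturbation $\mu - 1$ propagates — here the radial solutions $\int_0^{|x|}(\tau^n + c)^{1/n}\,\ud\tau$ from Theorem 1.1, with $c$ chosen so that $\omega_n c \ge \mu(B_R) - \omega_n R^n + (\text{slack})$, serve as the natural candidate since outside $B_R$ they are exact solutions of $\det\nabla^2 u = 1$ behaving like $\frac12|x|^2 + O(|x|^{2-n})$. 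The case $n=2$ would need the logarithmic correction, but since the paper assumes $n\ge 3$ throughout, the $O(|x|^{2-n})\to 0$ decay is automatic and makes the squeeze clean.
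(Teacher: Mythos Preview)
Your overall strategy---solve on balls $B_\rho$ with quadratic boundary data, trap $u_\rho$ between barriers independent of $\rho$, pass to the limit, and invoke the Caffarelli--Li expansion \cite{CL} for the sharp decay---is exactly the paper's. However, both of your barriers have gaps. For the upper bound you assume $\mu\ge 1$, but the hypothesis only says $\mathrm{supp}(\mu-1)$ is bounded; for instance $\mu$ could vanish on $B_1$, and then $u_\rho\le\tfrac12|x|^2$ simply fails. The paper instead takes $\bar u=0$ in $B_1$ and $\bar u(x)=\int_1^{|x|}(\tau^n-1)^{1/n}\,\ud\tau$ outside, which has Monge--Amp\`ere measure $0$ in $B_1$ and is therefore a supersolution for any $\mu\ge 0$. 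Without a correct upper bound your monotonicity-in-$\rho$ argument also collapses, since the comparison on $\partial B_{\rho_1}$ is no longer available.

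The lower barrier is the more serious issue, and your own candidate does not work: the radial function $\int_0^{|x|}(\tau^n+c)^{1/n}\,\ud\tau$ has Monge--Amp\`ere measure $1+c\omega_n\delta_0$, which equals Lebesgue \emph{away from the origin}, so it does not dominate $\mu$ pointwise as soon as $\mu$ carries any extra mass off $0$. Matching total masses is not enough for the comparison principle; you need $\det\nabla^2\underline u\ge\mu$ as measures. The paper's construction is two-tiered. First (Theorem~\ref{existence1}) it treats the case $\mu=f\,\ud x$ with $f\in L^1_{loc}$: on $B_1$ it solves an auxiliary Dirichlet problem $\det\nabla^2 v_1=f+a\eta$ with zero boundary data, uses Alexandrov's maximum principle to bound $v_1$ from below by a constant depending only on $\int_{B_1}f$, and then patches $v_1$ with a radial piece $\int_1^{|x|}(\tau^n+K_2)^{1/n}\,\ud\tau$ outside $B_1$, choosing $K_2$ so that the one-sided normal-derivative jump at $|x|=1$ has the right sign and the patched function is a genuine global subsolution. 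Only after this is the general measure $\mu$ handled, by approximating it weakly by $L^1$ densities $f_i$ with uniformly bounded $\int f_i$ and passing to the limit. Your proposal skips both the patching and the approximation layer, and as written the subsolution step does not go through.
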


If $\ud\mu=f(x)\ud x$ for some $f\in C(\R^n)$ satisfying supp$(f-1)$ is bounded and $\inf_{\R^n} f>0$, then Theorem \ref{existenceY}  was proved in \cite{CL}. We also have decay rates of $E$ and all of its derivatives in Theorem \ref{existenceY} and Theorem \ref{existence1} (in Section \ref{sec:measure data}), which follows from \cite{CL}.

The following theorem discusses some strictly convex properties of solutions of Monge-Amp\`ere equations with singularities, from which the regularity part of Theorem \ref{them:elementary1} follows. For a subset $\Gamma$ of $\Omega\subset\R^n$, we denote $\Gamma\subset\subset\om$ if $\Gamma\subset\overline\Gamma\subset \om$.

\begin{thm} \label{thm:strict convexity}
Let $\om$ be a bounded convex domain of $\R^n$, $0<\lambda\le\Lambda<\infty$, $\varphi\in C^{1,\beta}(\pa \om)$ for some $\beta>1-\frac 2n$ and $\Gamma \subset\subset\om$.  Let $u\in C(\overline \om)$ be a generalized convex solution of the Dirichlet problem
\[
\begin{split}
\begin{aligned}
\lambda\le\det &\nabla^2 u\le \Lambda&\quad& \mbox{in }\om\setminus\Gamma,\\
u&=\varphi& \quad & \mbox{on } \pa \om.
\end{aligned}
\end{split}
\]
Then $u$ is locally strictly convex in $\om\setminus\mathcal{C}(\Gamma)$, where $\mathcal{C}(\Gamma)$ is the convex hull of $\Gamma$. Moreover, when $n=3$ or $4$, and $\Gamma$ is a set consisting of finitely many points and line segments, then $u$ is locally strictly convex in $\om\setminus\mathcal{L}(\Gamma)$, where $\mathcal{L}(\Gamma)$ is the set of line segments each of which connects two points of $\Gamma$.
\end{thm}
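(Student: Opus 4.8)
Let me think about how to prove local strict convexity of a generalized solution to a Monge-Ampère equation with bounded right-hand side, away from the convex hull of the singular set $\Gamma$.

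The standard tool is Caffarelli's theory. If $u$ were not strictly convex at some point $x_0 \in \om \setminus \mathcal{C}(\Gamma)$, then $u$ would fail to be strictly convex there, meaning there is a supporting hyperplane $\ell$ touching $\{u = \ell\}$ in a set $W$ that is more than a single point. By Caffarelli's results on the structure of the contact set, $W$ has no extremal point in the interior of $\om$ — i.e., $W$ must be a convex set whose extremal points lie on $\partial\om$. But we also have $C^{1,\beta}$ boundary data with $\beta > 1 - 2/n$, so by the boundary regularity / localization results (again Caffarelli, and Trudinger-Wang, Savin), such a flat piece cannot reach the boundary either — the exponent $\beta > 1-2/n$ is precisely what rules out a segment in the contact set touching $\partial\om$. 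Hence $W$ would have to be a bounded convex set with extremal points... somewhere in $\overline\om$, contradiction, unless $W$ meets $\Gamma$.

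So, the plan is as follows. First I would recall the dichotomy from Caffarelli's work: if $u$ is a generalized solution of $\lambda \le \det\nabla^2 u \le \Lambda$ in an open set $U$, then $u$ is strictly convex in $U$ except possibly along flat pieces; and any maximal flat piece $W = \{u = \ell\}$ (a convex set of dimension $\ge 1$) has the property that its relative interior, if it contained an extremal point interior to $U$, would force a contradiction with the Monge-Ampère measure bounds. This gives: either $W$ is a point, or every extremal point of $W$ lies outside $U$. Apply this with $U = \om \setminus \Gamma$. Second, I would bring in the boundary condition: since $\varphi \in C^{1,\beta}(\partial\om)$ with $\beta > 1 - 2/n$, the localization theorem of Savin (building on Caffarelli) shows $u$ is strictly convex up to the boundary, so no flat piece can have an extremal point on $\partial\om$. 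Combining, any non-trivial flat piece $W$ must have an extremal point in $\Gamma$ — hence, being convex, $W \subset \mathcal{C}(\Gamma)$. This proves strict convexity in $\om \setminus \mathcal{C}(\Gamma)$.

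For the improved statement when $n = 3$ or $4$ and $\Gamma$ is a finite union of points and segments, I would use a dimension count. A flat piece $W$ is a convex set of dimension $m$ with $1 \le m \le n-1$; by the above, all extremal points of $W$ lie in $\Gamma$, so $W$ is the convex hull of a subset of $\Gamma$. Now I invoke the sharp bound on the dimension of contact sets: by Caffarelli's result (see also Caffarelli-Li \cite{CL}), for $\lambda \le \det\nabla^2 u \le \Lambda$ the contact set $W$ cannot contain an $m$-dimensional piece if $m \ge n/2$ — more precisely, a flat piece of dimension $m$ would contradict the measure lower bound when $2m > n$, since the Monge-Ampère measure cannot vanish to that order; this forces $m \le n/2$, i.e. $m = 1$ when $n = 3$ or $n = 4$ (as $m$ is an integer and $m \ge n/2$ is excluded, $m \le 1$). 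A one-dimensional flat piece whose extremal points lie in $\Gamma$ is a segment joining two points of $\Gamma$, i.e. $W \subset \mathcal{L}(\Gamma)$. Hence $u$ is strictly convex in $\om \setminus \mathcal{L}(\Gamma)$, and by Caffarelli's regularity for strictly convex generalized solutions with bounded right-hand side, $u$ is (locally) smooth there, which is what feeds into Theorem \ref{them:elementary1}.

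The main obstacle I expect is making the boundary step fully rigorous: one must ensure that the $C^{1,\beta}$ regularity of $\varphi$ with the borderline-sharp exponent $\beta > 1 - 2/n$ genuinely prevents a flat segment of the contact set from abutting $\partial\om$ — this requires the localization and boundary strict-convexity machinery (Savin's boundary regularity, or the Trudinger-Wang global regularity results), carefully quoting the hypotheses. The interior dichotomy, by contrast, is essentially a direct citation of Caffarelli's structural theorem on sections, and the dimension count for $n=3,4$ is a short integer-arithmetic argument once the $2m \le n$ bound on flat pieces is in hand. I would also need to confirm that "extremal points lie in $\Gamma$" plus convexity of $W$ really yields $W \subset \mathcal{C}(\Gamma)$ (immediate by Krein–Milman in finite dimensions), and in the refined case that a segment with both endpoints extremal and in $\Gamma$ is exactly an element of the family defining $\mathcal{L}(\Gamma)$.
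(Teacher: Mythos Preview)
Your strategy matches the paper's: use Caffarelli's localization theorem \cite{Caffarelli90} to force extremal points of a flat piece to lie in $\Gamma \cup \partial\om$, rule out $\partial\om$ using the $C^{1,\beta}$ assumption with $\beta>1-\frac{2}{n}$, and for $n=3,4$ invoke \cite{Caffarelli93} to force the flat piece to be one-dimensional.

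The one substantive point is the boundary step, which you correctly flag as the main obstacle. Your proposal to cite Savin \cite{Savin2} or Trudinger--Wang \cite{TW3} does not quite work as stated: those boundary-regularity results assume at least $C^{2}$ or $C^{2,\alpha}$ boundary data, strictly stronger than the $C^{1,\beta}$ with $\beta>1-\frac{2}{n}$ hypothesized here. The paper instead proves the needed statement directly (its Lemma~\ref{lem:strict convex}): if $u\ge 0$ is a generalized solution of $\det\nabla^2 u\ge\lambda$ in $\om$ with $u(0)=0$ at a boundary point $0\in\partial\om$ and $\varphi(x)\le c|x|^{1+\beta}$ near $0$, then $u>0$ in $\om$. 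The proof is an elementary barrier: if $u$ vanished along a segment into the interior, convexity would force $u(x',t)\le C|x'|^{1+\beta}$ in a cone, and comparing with a paraboloid on a thin ellipsoid inside the cone yields $r^{2-2/n}\lesssim r^{1+\beta}$ as $r\to 0$, a contradiction precisely when $\beta>1-\frac{2}{n}$. This is the missing ingredient that replaces your citation. A minor remark on the dimension count: the sharp conclusion from \cite{Caffarelli93} is $m<n/2$ (strict), which is what gives $m\le 1$ for $n=4$; your first phrasing ``$2m>n$ excluded'' would only give $m\le 2$ there.
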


Some strengthened strict convexity results will be discussed in Section \ref{sec:Regularity of local solutions}. 

Next, we move to discuss asymptotical behaviors of solutions of Monge-Amp\`ere equations with isolated and line singularity in bounded domains. 
\begin{thm} \label{thm:asymptotical behavior}
Let $\om$ be a bounded convex domain of $\R^n$ with $n\ge 2$, $\Gamma\subset\subset\om$ be either a point or a straight line segment.  Let $u$ be a convex function in $\om$ and $u\in C^2(\om\setminus\Gamma)$ satisfying
\be \label{eq:mae-singulary-line}
\begin{split}
\begin{aligned}
\det \nabla^2 u&=1& \mbox{in }\om\setminus\Gamma.\\
\end{aligned}
\end{split}
\ee
Then
\be\label{eq:mae-sigular-line-estimate}
|\nabla^2 u(x)|\leq \frac{C}{dist(x,\Gamma)},
\ee
where $C>0$ is independent of $x$.
\end{thm}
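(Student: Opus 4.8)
The plan is to argue by a rescaling/compactness contradiction. Suppose the estimate fails; then there is a sequence $x_j \in \om \setminus \Gamma$ with $r_j := \mathrm{dist}(x_j,\Gamma) \to 0$ (the interior estimates of Caffarelli–Pogorelov already control $|\nabla^2 u|$ away from $\Gamma$, so the blow-up must occur at $\Gamma$) and $r_j^2 |\nabla^2 u(x_j)| \to \infty$. Recenter at the nearest point $p_j \in \Gamma$ and consider the parabolic rescalings $u_j(y) := r_j^{-2}\bigl(u(p_j + r_j y) - \ell_j(y)\bigr)$, where $\ell_j$ is an affine function chosen to normalize $u_j$ (subtract the value and a supporting plane at, say, $y=0$). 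Each $u_j$ is convex and solves $\det \nabla^2 u_j = 1$ in $(r_j^{-1}(\om - p_j)) \setminus \Gamma_j$, where $\Gamma_j = r_j^{-1}(\Gamma - p_j)$; as $j\to\infty$ the domain exhausts $\R^n$ and $\Gamma_j$ converges (after passing to a subsequence and rotating) either to $\{0\}$ (point case, or line case when the singularity is "seen at unit scale as a point") or to a full line $L$ through the origin, while the marked point $\xi_j := r_j^{-1}(x_j - p_j)$ has $|\xi_j| = 1$ so, up to a subsequence, $\xi_j \to \xi_\infty$ with $|\xi_\infty| = 1$.

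The next step is to pass to the limit. Since the singular set $\Gamma_j$ collapses to a set of measure zero and the Monge–Ampère measures converge, one wants a locally uniform limit $u_\infty$ that is a generalized solution of $\det \nabla^2 u_\infty = 1$ in $\R^n \setminus \{0\}$ (point case) or in $\R^n \setminus L$ (line case). For this I need: (a) uniform local Lipschitz / $C^{1,\alpha}$ bounds on $u_j$ away from $\Gamma_j$, which follow from the Pogorelov–Caffarelli estimates once $u_j$ is normalized so that its sections are comparable to balls — this normalization is where the convexity and the fixed Monge–Ampère measure are used; and (b) a nondegeneracy statement ensuring the limit is not affine near $\xi_\infty$. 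In the line case, note that a solution on $\R^n \setminus L$ that is invariant in the $L$-direction reduces to a solution on $\R^{n-1}\setminus\{0\}$, so Theorem 1.1 (the $n$-dimensional Jörgens classification with one puncture, applied in dimension $n-1\ge 2$, using the $n=2$ classical result of Jörgens for the base case) pins down the limit; in the point case Theorem 1.1 applies directly. In either case the classified limit $u_\infty$ is $C^2$ and locally bounded in $C^2$ on a neighborhood of $\xi_\infty$ (since $|\xi_\infty|=1$ is at unit distance from the singularity). But $|\nabla^2 u_j(\xi_j)| = r_j^2|\nabla^2 u(x_j)| \to \infty$, and $C^2$-convergence of $u_j$ to $u_\infty$ near $\xi_\infty$ forces $|\nabla^2 u_j(\xi_j)| \to |\nabla^2 u_\infty(\xi_\infty)| < \infty$, a contradiction.

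To upgrade the limit from mere Alexandrov/locally-uniform convergence to $C^2$ convergence near $\xi_\infty$, I would invoke the interior a priori estimates for the Monge–Ampère equation: once $u_\infty$ is known to be strictly convex and $C^2$ near $\xi_\infty$ (guaranteed by the explicit classified form), Caffarelli's $C^{1,\alpha}$ estimates plus Pogorelov's $C^2$ (and then Evans–Krylov / Schauder) estimates give uniform $C^{2,\gamma}$ bounds for $u_j$ on a fixed small ball around $\xi_\infty$ for $j$ large, hence $C^2$ subconvergence to $u_\infty$ there.

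The main obstacle I anticipate is the normalization and compactness in step one: along the sequence, the rescaled functions could a priori degenerate (sections becoming long and thin) so that no uniform estimates are available, or the singular set $\Gamma_j$ might not converge to a clean limit (a line could, under rescaling centered at an endpoint, converge to a half-line rather than a full line). Handling the half-line case requires knowing the classification — or at least a Liouville-type rigidity — for solutions of $\det\nabla^2 u=1$ on $\R^n$ minus a ray, which is not literally Theorem 1.1; I would address this by choosing $p_j$ to be the point of $\Gamma$ nearest to $x_j$ and splitting into the case $\mathrm{dist}(x_j,\partial\Gamma)/r_j \to \infty$ (limit singular set is a full line, Theorem 1.1 in dimension $n-1$ applies after noting translation-invariance) versus the case $\mathrm{dist}(x_j,\partial\Gamma)/r_j$ bounded (limit singular set is a point at an endpoint, i.e. effectively an isolated singularity, and Theorem 1.1 applies directly). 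Ensuring the affine normalizations $\ell_j$ are chosen compatibly with the collapsing singular set — so that the limit is genuinely one of the classified solutions and the marked point stays at unit distance — is the delicate bookkeeping that makes the argument work.
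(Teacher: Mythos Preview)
Your approach is a blow-up/compactness argument culminating in the classification Theorem~1.1, whereas the paper proceeds by a direct Pogorelov-type maximum-principle estimate: one applies the auxiliary function $(x_n-\varepsilon)\,u_{ii}\,e^{\frac12 u_i^2}$ on the half-domain $\{x_n\ge\varepsilon\}$, runs the standard Pogorelov computation at an interior maximum, and obtains $(x_n-\varepsilon)u_{ii}\le C$ with $C$ depending only on $\|\nabla u\|_{L^\infty}$, $\mathrm{diam}(\Omega)$, and boundary $C^2$ data; sending $\varepsilon\to0$ and repeating in each direction orthogonal to $\Gamma$ gives the result. No rescaling or classification is needed.

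Your argument has a genuine gap at the compactness step, and it is precisely the obstacle you flag but do not resolve. Under the parabolic rescaling $u_j(y)=r_j^{-2}\bigl(u(p_j+r_jy)-\ell_j(y)\bigr)$ the Hessian does not scale, so $|\nabla^2 u_j(\xi_j)|=|\nabla^2 u(x_j)|$ (your formula $r_j^2|\nabla^2 u(x_j)|$ is incorrect, though the conclusion $\to\infty$ survives). The fatal point is that the $u_j$ are \emph{not} locally uniformly bounded: at a non-removable singularity the model solution behaves conically, $u_c(x)\sim c^{1/n}|x|$ near $0$, so $r_j^{-2}u_c(r_jy)\sim c^{1/n}|y|/r_j\to\infty$. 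Thus no subsequence converges, and you cannot produce a limit to which Theorem~1.1 applies. Replacing the parabolic rescaling by the Lipschitz rescaling $r_j^{-1}u(r_jy)$ restores compactness but destroys the equation (the right-hand side becomes $r_j^{\,n-2}\to0$ for $n\ge3$), so the limit carries no information. Normalizing instead by a John-type affine map to round the sections would in general shear $\Gamma$ into an arbitrary affine subspace, so the limiting singular set is no longer a point or a line and the classification is unavailable. A second, independent gap in the line case: even if a limit existed, there is no mechanism forcing it to be translation-invariant along $L$, so the reduction to an $(n-1)$-dimensional isolated singularity is unjustified. These difficulties are exactly why the paper opts for the direct barrier-free Pogorelov argument, which handles the conical behavior automatically through the weight $x_n-\varepsilon$.
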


We remark that the rate $O(1/dist (x, \Gamma))$ in \eqref{eq:mae-sigular-line-estimate} is the best we can have since the solution in Theorem \ref{thm a} is indeed of this rate, and an application of \eqref{eq:mae-sigular-line-estimate} can be found in Corollary \ref{cor:metric compl}. The assumption on the regularity on $u$ will be satisfied if some mild boundary condition is given as in Theorem \ref{thm:strict convexity}. Our proof also works for general set $\Gamma$ other than a point or a straight line with the estimate \eqref{eq:mae-sigular-line-estimate} replaced by $C/dist(x,\mathcal{C}(\Gamma))$ (see Remark \ref{rem:convex set}). Some explicit dependence of the constant $C$ will be given in Theorem \ref{thm:c} and Theorem \ref{thm:line}.

We shall end the introduction with an application of Theorem \ref{thm a}. In \cite{S}, Serrin proved that whenever $\Omega$ is a bounded smooth domain in $\R^n$, and $\nu$ is the outer normal of $\partial \Omega$, if $u\in C^2(\overline \Omega)$ is a solution of
\be\label{eq:serrin}
\begin{cases}
\begin{aligned}
\Delta u=n &\quad\mbox{in }\Omega,\\
u=0 &\quad\mbox{on }\partial\Omega,\\
\pa u/\pa\nu=1 &\quad\mbox{on }\partial\Omega,
\end{aligned}
\end{cases}
\ee
then after some translation $\Omega$ has to the unit ball and $u=\frac{|x|^2-1}{2}$. The proof of Serrin used the method of moving planes. Later,  Brandolini, Nitsch, Salani and Trombetti \cite{BNST} extended Serrin's result to $\sigma_k(\nabla^2 u)$, the $k$-th elementary symmetric function of $\nabla^2 u$, via an alternative approach. Namely, they showed that whenever $\Omega$ is a bounded smooth domain, and $\nu$ is the outer normal of $\partial \Omega$, if $u\in C^2(\overline \Omega)$ is a solution of
\[
\begin{cases}
\begin{aligned}
\sigma_k(\nabla^2 u)&= \binom{n}{k} &\quad&\mbox{in }\Omega,\\
u&=0 &\quad&\mbox{on }\partial\Omega,\\
\pa u/\pa\nu&=1 &\quad&\mbox{on }\partial\Omega
\end{aligned}
\end{cases}
\]
with $k=1,2,\cdots, n$, then after some translation $\Omega$ has to the unit ball and $u=\frac{|x|^2-1}{2}$. In this spirit, we show that

\begin{thm}\label{thm:e}
Let $\Omega$ be a bounded smooth domain in $\R^n$ with $n\ge 2$. If there exists a locally convex function $u\in C^1(\R^n\setminus \Omega)\cap C^2(\R^n\setminus \overline\Omega) $ satisfying
\[
\begin{cases}
\begin{aligned}
\det\nabla^2 u=1 &\quad\mbox{in }\R^n\setminus\overline\Omega,\\
u=0 &\quad\mbox{on }\partial\Omega,\\
\pa u/\pa\nu=0 &\quad\mbox{on }\partial\Omega,
\end{aligned}
\end{cases}
\]
where $\nu$ is the unit outer normal of $\pa\om$, then $\Omega$ has to be an ellipsoid.
\end{thm}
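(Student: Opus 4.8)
\emph{Proof plan.} The idea is to extend $u$ by zero across $\overline\Omega$, pass to the Legendre transform — which converts the flat region $\overline\Omega$ into an isolated point singularity — apply Theorem \ref{thm a}, and then read off that $\overline\Omega$ is an ellipsoid.

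Since $u\in C^1(\R^n\setminus\Omega)$ with $u\equiv 0$ on $\partial\Omega$ and $\partial u/\partial\nu=0$ on $\partial\Omega$, the full gradient $\nabla u$ vanishes on $\partial\Omega$ (its tangential components because $u$ is constant on $\partial\Omega$). Hence the function $\tilde u$ defined by $\tilde u=u$ on $\R^n\setminus\Omega$ and $\tilde u\equiv 0$ on $\overline\Omega$ belongs to $C^1(\R^n)$. The first step is to show that $\tilde u$ is convex on $\R^n$; this combines the local convexity of $u$, the $C^1$ matching of value and gradient along $\partial\Omega$, and the paraboloid asymptotics of $u$ at infinity from Caffarelli--Li \cite{CL} (in the spirit of Proposition \ref{prop:across}), and it yields in particular $u\ge 0$ outside $\Omega$. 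Once $\tilde u$ is convex, the sublevel set $\{\tilde u=0\}$ is convex and cannot contain an open set lying outside $\overline\Omega$ (there $\det\nabla^2 u=1\neq 0$), so $\{\tilde u=0\}=\overline\Omega$; in particular $\overline\Omega$ is convex, and since $\tilde u\in C^1$ with $\tilde u\equiv 0$ on $\overline\Omega$ we have $\partial\tilde u(\overline\Omega)=\{0\}$.

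Now let $v:=\tilde u^{*}$ be the Legendre transform of $\tilde u$, a finite convex function on $\R^n$ (as $\tilde u$ grows quadratically at infinity). The Monge--Amp\`ere measure of $\tilde u$ equals Lebesgue measure on $\R^n\setminus\overline\Omega$ and vanishes on $\overline\Omega$. Using $\partial v=(\partial\tilde u)^{-1}$ together with $\det\nabla^2 u=1$ in $\R^n\setminus\overline\Omega$ — so that $\nabla u$ is a measure preserving bijection of $\R^n\setminus\overline\Omega$ onto $\R^n\setminus\{0\}$ (injectivity because convexity and $\det\nabla^2 u=1$ preclude affine pieces outside $\overline\Omega$, and it vanishes nowhere there) — one computes that $v$ is a generalized solution of $\det\nabla^2 v=1$ in $\R^n\setminus\{0\}$ with Monge--Amp\`ere mass $|\Omega|$ at the origin, and that $\partial v(0)=\{\tilde u=0\}=\overline\Omega$. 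By Theorem \ref{thm a} (and by J\"orgens \cite{J2} when $n=2$) there are $c\ge 0$, a matrix $A$ with $\det A=1$, $b\in\R^n$, and an affine $\ell$ with $v(x)=w_c(Ax+b)-\ell(x)$, where $w_c(x)=\int_0^{|x|}(\tau^n+c)^{1/n}\,\ud\tau$. Then $\overline\Omega=\partial v(0)=A^{t}\,\partial w_c(b)-\nabla\ell$ has positive Lebesgue measure, while $w_c$ is differentiable away from the origin; this forces $b=0$ and $c>0$, and since $\partial w_c(0)=\overline{B_{c^{1/n}}}$ we conclude that $\overline\Omega=A^{t}\,\overline{B_{c^{1/n}}}-\nabla\ell$ is an ellipsoid, hence $\Omega$ is an ellipsoid.

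The main obstacle is the global convexity of $\tilde u$ claimed in the second paragraph: mere local convexity of $u$ on the non-convex set $\R^n\setminus\Omega$, even with $u$ and $\nabla u$ vanishing on $\partial\Omega$, does not by itself give global convexity, and one must feed in the global structure of $u$ — its strict convexity and paraboloid behavior outside the convex hull of $\Omega$ from \cite{CL} — to rule out that the convex envelope of $\tilde u$ drops below $\tilde u$. The remaining steps are routine bookkeeping with subdifferentials and the Legendre duality for Monge--Amp\`ere measures.
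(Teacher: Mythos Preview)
Your proposal is correct and follows essentially the same route as the paper: extend $u$ by zero to a $C^1$ convex function on $\R^n$, pass to the Legendre transform $u^*$, verify that $u^*$ solves $\det\nabla^2 u^*=1$ on $\R^n\setminus\{0\}$, apply Theorem~\ref{thm a} (J\"orgens \cite{J2} for $n=2$), and read off $\overline\Omega=\partial u^*(0)$ as an ellipsoid. The one place you diverge from the paper is in overestimating the difficulty of the convexity step: the paper dispatches it in one line from strict convexity ($\nabla^2 u>0$, since $\det\nabla^2 u=1$) together with the $C^1$ boundary data, and indeed no Caffarelli--Li asymptotics are needed there --- on any line $\gamma$, a bounded component $(a,b)$ of $\{t:\gamma(t)\notin\overline\Omega\}$ would give $f(t)=\tilde u(\gamma(t))$ strictly convex on $(a,b)$ with $f(a)=f(b)=f'(a)=f'(b)=0$, which is impossible, so $\overline\Omega$ is automatically convex and then convexity of $\tilde u$ (and $u>0$ outside $\overline\Omega$) follows line by line; the asymptotics from \cite{CL} enter only later, to show that $\nabla u$ surjects onto $\R^n\setminus\{0\}$.
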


As mentioned in the recent paper Shahgholian \cite{Sh} that little is known about \eqref{eq:serrin} in unbounded domains even with quadratic growth condition on $u$ near infinity.
We refer to \cite{Sh} and references therein for more discussions and open problems in this direction. It is also interesting to ask similar questions for $\sigma_k(\nabla^2 u)$ instead of $\det\nabla^2 u$ in Theorem \ref{thm:e}.

This paper is organized as follows. In Section \ref{sec2}, we consider the case of one singularity and prove Theorems \ref{thm a} and \ref{thm:e}. In Section \ref{sec3}, we study the case of multiple singularities and show Theorems \ref{them:elementary1} and \ref{thm:strict convexity}. Section \ref{sec:line and Y} is devoted to the case of line singularity and proving Theorems \ref{existenceY} and \ref{thm:asymptotical behavior}.
\bigskip

\noindent\textbf{Acknowledgements:} Both authors thank Professor YanYan Li
for valuable suggestions and constant encouragement. We also thank Tian Yang for discussions.
The second author is grateful to Professor Gang Tian for his suggestions to study Monge-Amp\`ere equations with singularities. He was supported in part by the First Class Postdoctoral Science Foundation of China (No. 2012M520002).

\section{One singular point}\label{sec2}

\subsection{Classification of global solutions and an application}
\begin{prop}\label{prop:across}
Let $u$ be a locally convex function in $B_1\setminus \{0\}$, where $B_1$ is the unit ball in $\R^n$ with $n\ge 2$. Then $u$ can be extended to be a convex function in $B_1$.
\end{prop}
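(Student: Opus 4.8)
The plan is to show that $u$ is bounded below near $0$ and that its limit inferior at $0$ yields the right value for a convex extension. First I would fix a small sphere $\partial B_r \subset B_1 \setminus \{0\}$ on which $u$ is continuous (convexity on the open annulus gives continuity away from $0$), and use the convexity of $u$ along line segments. The key observation is: for any $x$ with $0 < |x| < r$, the point $0$ lies on the segment from $x$ to $-\tfrac{r}{|x|}x \in \partial B_r$ only in the degenerate sense; instead, take any unit vector $e$ and consider the segment from $re$ to $-re$; convexity along it, together with the fact that $0$ is its midpoint, shows $u(0^{\pm} \text{ along } e)$ is controlled. More usefully, for $x$ near $0$, writing $x$ as a convex combination involving a point of $\partial B_r$ and a point near $0$ on the opposite ray, one gets a uniform lower bound $u(x) \ge m$ on $B_{r/2}\setminus\{0\}$ for some finite $m$, since $u|_{\partial B_r}$ is bounded and convexity forbids $u$ from dropping to $-\infty$ at an interior point of segments with endpoints where $u$ is finite.

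Next I would define $L = \liminf_{x\to 0} u(x)$, which by the previous step is a finite real number (it is also $\le \limsup$, which is finite since $u$ is bounded above on $B_{r/2}$ by convexity from the values on $\partial B_r$ — actually convexity gives $u(x) \le \max_{\partial B_r} u$ on all of $B_r$, so both limsup and liminf are finite). Set $\tilde u(0) = L$ and $\tilde u = u$ elsewhere. I then need to check $\tilde u$ is convex on $B_1$, i.e. that for every segment $[p,q]\subset B_1$ and $t\in[0,1]$, $\tilde u(tp+(1-t)q) \le t\tilde u(p)+(1-t)\tilde u(q)$. The only new cases are segments passing through $0$ or having $0$ as an endpoint. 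For $0$ an endpoint: take $x_j\to 0$ along the segment with $u(x_j)\to L$; since $u$ is convex on $B_1\setminus\{0\}$ and continuous along the open segment, pass to the limit in the convexity inequality on sub-segments to get the inequality at $0$. For $0$ an interior point of $[p,q]$: write $0 = s p + (1-s) q$; approximate by $0_j = s p + (1-s) q$ shifted slightly, or rather pick sequences on each side converging to $0$ and use lower semicontinuity of $\liminf$ together with the one-sided inequalities just established. The clean way: show $\tilde u$ restricted to each line through $0$ is convex (a one-variable statement), and convexity of a function whose restriction to every line is convex, plus local boundedness, gives convexity — this last fact is standard.

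The main obstacle, and the step deserving the most care, is ruling out that $u\to -\infty$ along some sequence approaching $0$ while staying bounded (or going to $+\infty$) along another — i.e. establishing the honest lower bound near $0$. This is where one genuinely uses that $0$ is a single point and not, say, a segment: any $x$ near $0$ can be caught strictly between a point of $\partial B_r$ and a point on the opposite side also near $0$, and iterating (or a direct argument: $u(x) \ge 2u(0\text{-ish midpoint}) - u(\text{antipodal near-}0\text{ point})$ is circular, so instead use that the midpoint of $[re, -re]$ is exactly $0$ and $u(re), u(-re)$ are finite, giving $\liminf$ along each diameter is $\le \tfrac12(u(re)+u(-re)) < \infty$ from above, while the lower bound comes from: $u$ convex on the punctured ball is locally Lipschitz there, hence on $\partial B_{r/2}$ bounded, and for $|x| < r/2$ convexity along the segment from $x$ through $0$-side... ). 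Concretely I expect to argue: for $0<|x|<r/2$, the origin is not needed — take $y = \tfrac{r}{2|x|}x$ and $z = -\tfrac{r}{2|x|}x$ on $\partial B_{r/2}$; then $x$ is a convex combination of $y$ and... no. The right move is simply: $u$ is bounded on $\partial B_{r/2}$, say $|u|\le M$ there; for $0<|x|<r/2$ pick the point $x^* = \tfrac{r}{2|x|}x \in \partial B_{r/2}$ and note $x \in [x^*, z]$ where $z$ is the diametrically opposite point, $z = -x^*$, and $0 \in [x,z]$; convexity on the punctured ball applied to the sub-segment $[x^*, x'] $ for $x'$ close to $0$ gives $u(x) \le \max(M, u(x'))$, and separately a support-line argument at a fixed point of $\partial B_{r/4}$ gives the lower bound $u(x)\ge -M - (\text{Lip const})\cdot r$ — I will write this out carefully as it is the crux. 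Everything after that — finiteness of $L$, convexity of the extension via the line-restriction criterion — is routine.
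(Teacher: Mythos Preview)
Your route is workable in principle but is harder than necessary and, as written, has two genuine gaps. First, you never actually establish the lower bound near $0$: each attempt you sketch either passes through the origin (so local convexity on $B_1\setminus\{0\}$ does not apply), is admitted to be circular, or trails off into an unjustified support-plane or Lipschitz claim. A clean direct argument does exist---for $|x|$ small pick $p\in\partial B_{r/4}$ not parallel to $x$, set $q=2p-x$, note the segment $[x,q]$ avoids $0$, and read off $u(x)\ge 2u(p)-u(q)$ from convexity at the midpoint $p$---but nothing close to this appears in your proposal. Second, in the ``$0$ as endpoint'' case you write ``take $x_j\to 0$ along the segment with $u(x_j)\to L$''; there is no reason the global $\liminf$ is realized along that particular ray. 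The fix is to take $x_j\to 0$ from \emph{any} direction realizing $L$ and use continuity of $u$ at the point $(1-\lambda)y\ne 0$, but as stated the step is wrong.

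The paper sidesteps both difficulties with one change: define $u(0)=\limsup_{x\to 0}u(x)$ rather than $\liminf$. Finiteness of $\limsup$ needs only the trivial upper bound (write $x$ near $0$ as the midpoint of two points bounded away from $0$ and $\partial B_1$), so no lower bound is required up front. Convexity of the extension is then verified by approximating any segment through $0$ by nearby segments that miss $0$ (this is where $n\ge 2$ enters) and passing to the limit; the $\limsup$ choice makes the limit go the right way in every case. Once convexity is established, continuity on the open ball is automatic, and only then does one learn a posteriori that $\liminf=\limsup$ and the lower bound holds.
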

\begin{proof} This proposition should be known, but we still provide a proof here for completeness.

Step 1: We show that $\limsup_{|x|\to 0}u(x)<\infty$. For any $x$ close to $0$, we can choose $x_1, x_2$ away from $0$ and $\pa B_1$ such that $2x=x_1+x_2$. Since $u$ is locally convex in $B_1\setminus \{0\}$, $u$ is convex on the line segment from $x_1$ to $x_2$, i.e.,
\[
u(x)\leq \frac{u(x_1)+u(x_2)}{2}.
\]
Since $u$ is convex near $x_1$ and $x_2$,  $u$ is continuous and hence bounded near $x_1$ and $x_2$. Thus, we have $\limsup_{|x|\to 0}u(x)<\infty$.

\medskip

Step 2: Define $u(0)= \limsup_{|x|\to 0}u(x)<\infty$. We will show that for any $x, y\in B_1$ and $\lambda\in [0,1]$,
\[
u(\lambda x+(1-\lambda)y)\leq \lambda u(x)+(1-\lambda)u(y).
\]
We only need to show the above inequality when $0$ is on the line segment from $x$ to $y$. If $x\neq 0$ and $y\neq 0$, we can choose $x_i, y_i, z_i\in B_1$ such that $x_i\to x,\ y_i\to y, z_i\to 0\ \mbox{as}\  i\to\infty$,
\[
\lim_{z_i\to 0}u(z_i)=u(0),
\]
\[
z_i=\lambda x_i+(1-\lambda)y_i,
\]
and $0$ is not on the line segment from $x_i$ to $y_i$ for each $i$.
Since
\[
u(z_i)\leq \lambda u(x_i)+(1-\lambda)u(y_i),
\]
we have
\[
u(0)\leq \lambda u(x)+(1-\lambda)u(y)
\]
 by sending $i\to\infty$.

If $x=0$, and $y\neq 0$, we choose $x_i\to 0, \ y_i\to y$ as $i\to\infty$ such that $0$ is not on the line segment from $x_i$ to $y_i$ for each $i$. For every $\lambda\in [0,1)$, we have
\[
u(\lda x_i+(1-\lambda)y_i)\leq \lambda u(x_i)+(1-\lambda)u(y_i).
\]
Since $u$ is continuous near $y$ and $(1-\lambda)y$, we have
\[
u((1-\lambda)y)\leq\lambda \limsup_{i\to\infty}u(x_i)+(1-\lambda)u(y) \leq \lambda u(0)+(1-\lambda)u(y).
\]
by sending $i\to\infty$.

Therefore, we can conclude that $u$ is convex in $B_1$ from the fact that a locally convex function in a convex domain is convex. In particular, $u$ is continuous in $B_1$.
\end{proof}
\begin{prop}\label{prop:lineconvex}
Let $\Gamma\subset\subset B_1\subset\R^n$ be a straight line segment with $n\ge 3$. Let $u$ be locally convex in $B_1\setminus\Gamma$. Then $u$ can be uniquely extended to be a convex function in $B_1$.
\end{prop}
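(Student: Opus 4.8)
The plan is to reduce the line-segment case to the single-point case handled in Proposition \ref{prop:across} by slicing $B_1$ with hyperplanes transverse to $\Gamma$. After an affine change of coordinates (which preserves local convexity), I may assume $\Gamma = \{(t,0,\dots,0): |t|\le a\}$ for some $a<1$ lies on the $x_1$-axis. For each fixed $t$ with $|t|<1$, consider the slice $\om_t = \{x' \in \R^{n-1}: (t,x')\in B_1\}$, which is an $(n-1)$-dimensional ball (here $n-1\ge 2$, which is where the hypothesis $n\ge 3$ is used). If $|t|\le a$ the slice of $B_1\setminus\Gamma$ is $\om_t\setminus\{0\}$; if $|t|>a$ it is all of $\om_t$. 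The restriction $x'\mapsto u(t,x')$ is locally convex on $\om_t\setminus\{0\}$ (or on $\om_t$), so by Proposition \ref{prop:across} applied on each such slice it extends to a convex function on $\om_t$; this defines a value $u(t,0)$ for $|t|\le a$, namely $u(t,0) = \limsup_{x'\to 0} u(t,x')$.

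Next I would show the extended function is convex on all of $B_1$. As in the proof of Proposition \ref{prop:across}, since a locally convex function on a convex domain is convex, it suffices to verify the convexity inequality $u(\lambda x + (1-\lambda)y) \le \lambda u(x) + (1-\lambda)y)$ along segments $[x,y]$ that meet $\Gamma$, and the only genuinely new case is when an endpoint, say $x$, lies on $\Gamma$ and $y\notin\Gamma$. I would approximate $x$ by points $x_i \notin \Gamma$ and $y$ by $y_i$ with $y_i\to y$, chosen so that the open segment from $x_i$ to $y_i$ misses $\Gamma$ (possible since $\Gamma$ is one-dimensional and $n\ge 2$, so a generic nearby segment avoids it), apply local convexity of $u$ on $B_1\setminus\Gamma$ along $[x_i,y_i]$, and pass to the limit; the key point needed is that $\limsup_{i\to\infty} u(x_i) \le u(x)$, i.e. upper semicontinuity of the extension from outside $\Gamma$ toward points of $\Gamma$. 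This last fact follows because $u(x) = u(t_0,0)$ was defined as a $\limsup$ within the slice $\om_{t_0}$, and one must upgrade this to a $\limsup$ over all approach directions; this can be done using the Step-1 style bound, writing any nearby point as a midpoint of two points at controlled distance from $\Gamma$ and $\partial B_1$, which gives a uniform local upper bound for $u$ near $\Gamma$, hence control of the relevant $\limsup$.

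The main obstacle I anticipate is precisely this coordination between the slice-by-slice extension and the full $n$-dimensional convexity: a priori the slice extensions only give convexity in the $x'$ directions for each fixed $t$, and one must argue that the resulting function of all $n$ variables is jointly convex and that its values on $\Gamma$ agree with the intrinsic $\limsup$ from $B_1\setminus\Gamma$. The cleanest way around this is to not define $u$ on $\Gamma$ slice-by-slice at all, but rather to first establish a uniform local upper bound $\limsup_{x\to p} u(x) < \infty$ for every $p\in\Gamma$ by the midpoint trick of Step 1 (choosing $x_1,x_2$ with $2p = x_1+x_2$ and both $x_1,x_2$ away from $\Gamma\cup\partial B_1$ — available since $n\ge 2$), then set $u(p) := \limsup_{x\to p} u(x)$ directly, and finally run the Step-2 argument of Proposition \ref{prop:across} verbatim, the role of the single point $0$ there being played by an arbitrary point of $\Gamma$ and the only structural input being that segments can be perturbed off $\Gamma$, which holds because $\dim\Gamma = 1 \le n-1$. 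Uniqueness of the extension is immediate since a convex function is continuous on the interior of its domain, so its values on $\Gamma$ are forced by continuity.
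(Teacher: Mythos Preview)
Your proposal is essentially correct, but it diverges from the paper's argument and has a couple of small wrinkles worth flagging.

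The paper actually follows your \emph{first} approach: it slices $B_1$ by the hyperplanes $H_s=\{x_1=s\}$, applies Proposition~\ref{prop:across} on each slice to define $u(s,0)$, and then handles the ``coordination obstacle'' you identified not via a full-limsup redefinition but by proving continuity of the extended $u$ along any segment $\tilde\Gamma$ meeting $\Gamma$ at a single point $P_s$. The key trick there is to compare $u(x)$ with $u(x')$, where $x'$ is the projection of $x$ into $H_s$ along the $x_1$-direction; since the segment from $x$ to $x'$ is parallel to $\Gamma$ and avoids it, convexity gives a Lipschitz bound $|u(x)-u(x')|\le C|x-x'|$ with $C=\sup_{B_{0.8}\setminus B_{0.6}}|\nabla u|$, and continuity of $u(s,\cdot)$ on the slice finishes the job.

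Your preferred second approach---define $u(p):=\limsup_{x\to p}u(x)$ directly and rerun the Step~2 perturbation argument---is a legitimate alternative and arguably more streamlined. Two small corrections: first, the dimension count for ``a generic nearby segment avoids $\Gamma$'' requires $\dim\Gamma+1<n$, i.e.\ $n\ge 3$, not merely $n\ge 2$ as you wrote (for $n=2$ a segment crossing $\Gamma$ transversally cannot be perturbed off it); this is exactly the hypothesis, so no harm done. Second, ``Step~2 verbatim'' glosses over the cases where the segment $[x,y]$ meets $\Gamma$ in more than one point---in particular when both endpoints lie on $\Gamma$, or when $[x,y]$ lies along the $x_1$-axis---which have no analogue in Proposition~\ref{prop:across}. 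These cases are easily handled by the same perturbation idea (translate the whole segment off the axis), but they should be mentioned. What your approach buys is that one never needs the Lipschitz/projection estimate; what the paper's approach buys is that continuity of the extension (not just convexity) is established directly, and the argument localizes cleanly to one slice at a time.
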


\begin{proof}
Our proof works for $\Gamma$ to be an open, or closed, or half open half closed line segment. Without loss of generality, we may assume that 
\[
\Gamma=\{x=(x_1, \cdots, x_n)\in B_1: x_1\in [-1/2, 1/2], x_j=0\mbox{ for } 2\le j\le n\}.
\]
Let $H_s=\{x\in B_1: x_1=s\}$. Then $u(s,\cdot)$ is a locally convex function in $H_s\setminus\Gamma$. By Proposition \ref{prop:across}, $u(s,\cdot)$ can be extended to be a convex function, which is still denoted as $u(s,\cdot)$, on $H_s$. Moreover, it is clear that $u(\cdot, 0)$ is convex on $\Gamma$. We will show that $u$ is convex on any line segment $\tilde\Gamma$ in $B_1$. Since $n\ge 3$, by approximations as in the proof of Proposition \ref{prop:across}, we only need to show that for any line segment $\tilde\Gamma$ satisfying $\tilde\Gamma\cap\Gamma=P_s=(s,0,\cdots,0)$,
\be\label{eq:continuous}
\lim_{x\in\tilde\Gamma, x\to P_s}u(x)=u(P_s).
\ee
Suppose first that $\tilde\Gamma$ does not lie on the $x_1$-axis. For $x\in \tilde\Gamma$, let $x'$ be the projection point of $x$ from $\tilde\Gamma$ to $H_s$. Then
\[
\begin{split}
|u(P_s)-u(x)|&\le |u(P_s)-u(x')|+|u(x')-u(x)|\\
&\le |u(P_s)-u(x')|+C(u)|x'-x|,
\end{split}
\]
where $C(u)=\sup_{B_{0.8}\setminus B_{0.6}}|\nabla u|<\infty$. Since $u$ in continuous on $H_s$, \eqref{eq:continuous} holds.

If $\tilde\Gamma$ lies on the $x_1$-axis, for $x\in\tilde\Gamma$, we choose $x'\in H_s$ such that $|x-P_s|=|x'-P_s|$. Then \eqref{eq:continuous} follows in the same way as above.
\end{proof}

\begin{cor}\label{cor:Yconvex}
Let $\Gamma\subset \subset B_1\subset \R^n$ be a union of finite many line segments, where $n\ge 3$. Let $u$ be locally convex in $B_1\setminus\Gamma$. Then $u$ can be uniquely extended to be a convex function in $B_1$.
\end{cor}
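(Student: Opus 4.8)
The plan is to reduce Corollary~\ref{cor:Yconvex} to Proposition~\ref{prop:lineconvex} by an induction on the number $m$ of line segments comprising $\Gamma$. Write $\Gamma=\Gamma_1\cup\cdots\cup\Gamma_m$ where each $\Gamma_i$ is a (possibly open, closed, or half-open) line segment with $\Gamma\subset\subset B_1$. The base case $m=1$ is exactly Proposition~\ref{prop:lineconvex}. For the inductive step, suppose the statement holds for any union of at most $m-1$ segments and let $u$ be locally convex in $B_1\setminus\Gamma$. Set $\Gamma'=\Gamma_1\cup\cdots\cup\Gamma_{m-1}$. The difficulty is that $u$ is not a priori locally convex in the larger open set $B_1\setminus\Gamma_m$, since it may fail convexity along $\Gamma'$; so one cannot directly apply Proposition~\ref{prop:lineconvex} with singular set $\Gamma_m$.

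The first step I would take is to work on a smaller scale to separate the segments. Because $\Gamma$ consists of finitely many compact segments, one can cover $\Gamma$ by finitely many balls $B^{(1)},\dots,B^{(N)}$, each compactly contained in $B_1$, such that each $B^{(\ell)}$ meets at most \emph{one} of a chosen collection of relatively open sub-segments, after possibly decomposing each $\Gamma_i$ into finitely many pieces; the only places this fails are near the finitely many crossing/endpoint configurations. An alternative and cleaner route, which I would actually pursue, is to avoid the covering and argue line-by-line exactly as in the proofs of Propositions~\ref{prop:across} and~\ref{prop:lineconvex}: first show $\limsup_{x\to p}u(x)<\infty$ for every $p\in\Gamma$ (midpoint convexity from two points off $\Gamma$, using $n\ge3$ so that the complement of finitely many segments is connected and ``most'' line segments avoid $\Gamma$), then define $u$ on $\Gamma$ by that $\limsup$, and finally verify the convexity inequality $u(\lambda x+(1-\lambda)y)\le\lambda u(x)+(1-\lambda)y)$ — wait, $\le\lambda u(x)+(1-\lambda)u(y)$ — on every segment $\tilde\Gamma\subset B_1$.

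For that last verification, the key observation is that since $n\ge3$ and $\Gamma$ is a finite union of $1$-dimensional segments, a generic perturbation of any segment $\tilde\Gamma$ avoids $\Gamma$ entirely, so the convexity inequality on such perturbed segments passes to the limit provided $u$ is continuous up to $\Gamma$ along the relevant directions. Continuity of the extension is the crux: for a point $p\in\Gamma$ lying on only one segment $\Gamma_i$ (away from crossings and endpoints), local strict convexity transverse to $\Gamma_i$ gives, just as in Proposition~\ref{prop:lineconvex}, the bound $|u(p)-u(x)|\le|u(p)-u(x')|+C(u)|x-x'|$ where $x'$ is a suitable projection into a hyperplane transverse to $\Gamma_i$ and $C(u)=\sup_{\text{annulus}}|\nabla u|$; applying the already-known one-segment result in a small ball gives continuity at $p$. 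The finitely many exceptional points (endpoints of segments, intersection points) form a set of points, and at such a point one can either invoke Proposition~\ref{prop:across} directly in a small ball (since near an isolated crossing the configuration, intersected with a small enough ball and sliced by an appropriate hyperplane, reduces to finitely many points, handled by iterating Proposition~\ref{prop:across}), or simply note that a convex function defined and locally bounded off a closed set of Hausdorff dimension $\le1$ in $\R^n$, $n\ge3$, extends by taking $\limsup$, with the inequality recovered by density of good segments.

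The main obstacle I anticipate is bookkeeping at the singular locus of $\Gamma$ itself — crossings of two or more segments and coincident endpoints — where the reduction to a single transverse slice (as used so cleanly in Proposition~\ref{prop:lineconvex}) breaks down. I would handle this by an inner induction or a localization: around any such exceptional point $q$, choose a ball $B_\rho(q)\subset\subset B_1$ so small that $\Gamma\cap B_\rho(q)$ consists only of the finitely many segments through $q$ (no other segments, no other crossings), then apply the already-established continuity at all non-exceptional points of $\Gamma\cap B_\rho(q)$ to get $u$ continuous on $\overline{B_\rho(q)}\setminus\{q\}$, and finally apply the $\limsup$-extension argument of Proposition~\ref{prop:across} (which only needs that the complement of the singular set is ``path-rich'' enough, true here since $n\ge3$) to extend across $q$. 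Uniqueness of the extension in all cases is immediate: any convex — hence continuous — extension must agree with $\limsup_{x\to p}u(x)$ at each $p\in\Gamma$ by density of $B_1\setminus\Gamma$ in $B_1$, and two continuous functions agreeing on a dense set coincide. Once convexity on every segment is established, the cited fact that a locally convex function on a convex domain is convex finishes the proof.
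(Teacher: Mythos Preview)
Your approach is correct and ultimately rests on the same two ingredients as the paper's, but the paper's proof is dramatically shorter. The paper observes that local convexity is a local property: around any point of $\Gamma$ lying on exactly one segment $\Gamma_i$, a small ball meets $\Gamma$ in a single segment, so Proposition~\ref{prop:lineconvex} (applied in that ball) extends $u$ to be locally convex there. This yields a locally convex extension of $u$ to $B_1\setminus\Gamma_p$, where $\Gamma_p$ is the finite set of crossing points; then Proposition~\ref{prop:across} (applied locally at each point of $\Gamma_p$) finishes. That is the entire proof.

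You reach the same two-step reduction --- generic points on segments via the line-segment proposition, exceptional crossing points via the point proposition --- but only after discarding an induction on $m$ and a covering scheme, and you embed the invocations of the two propositions inside a from-scratch $\limsup$-extension and segment-by-segment continuity check that largely reproves them. The redundancy is that once Propositions~\ref{prop:lineconvex} and~\ref{prop:across} hand you \emph{local convexity} (not merely continuity) on all of $B_1$, the cited fact that a locally convex function on a convex domain is convex ends the argument immediately; your separate verification of the convexity inequality on every segment $\tilde\Gamma$ via generic perturbation is then unnecessary. Your route works, but it does not exploit the modularity the two propositions provide.
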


\begin{proof}
By Proposition \ref{prop:lineconvex}, $u$ can be extended to be a locally convex function in $B_1\setminus \Gamma_p$, where $\Gamma_p$ is set of finitely many points in $B_1$. Then Corollary \ref{cor:Yconvex} follows from Proposition \ref{prop:across}.
\end{proof}

\begin{prop}\label{prop:sub}
Suppose $u\in C(B_1)$ is locally convex in $B_1\setminus\{0\}\subset \R^n$ with $n\ge 2$, and is a generalized solution of
\[
\det\nabla^2 u=1\quad\mbox{in }B_1\setminus\{0\},
\]
then
\[
\det\nabla^2 u= 1+|\pa u(0)|_{\mathcal{H}^n}\delta_0\quad\mbox{in }B_1.
\]
\end{prop}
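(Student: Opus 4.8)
The plan is to promote $u$ to a globally convex function on $B_1$ by Proposition~\ref{prop:across}, and then compute its Monge-Amp\`ere measure by isolating the mass carried by the point $0$. Since $u$ is locally convex in $B_1\setminus\{0\}$ and continuous on all of $B_1$, the canonical extension from Proposition~\ref{prop:across} --- whose value at $0$ is $\limsup_{x\to0}u(x)$, and this equals the given $u(0)$ by continuity --- shows that $u$ is convex on $B_1$. Hence the Monge-Amp\`ere measure $\mu_u(E):=|\pa u(E)|_{\mathcal{H}^n}$ is a well-defined, locally finite Borel measure on the convex set $B_1$ (see \cite{G}), and it is this $\mu_u$ that we must identify.

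The one genuine point is that the puncture does not affect the subdifferential at nonzero points: for $x\in B_1\setminus\{0\}$, the supporting slopes of $u$ at $x$ are the same whether $u$ is viewed as a convex function on $B_1$ or as a locally convex function on $B_1\setminus\{0\}$. Indeed, if $p$ supports $u$ at $x$ on some small neighborhood of $x$ omitting $0$, then for any $y\in B_1$ one restricts $u$ to the segment from $x$ to $y$, which is a one-variable convex function (this is where the convexity of the extension on $B_1$ is used, for segments through $0$); the monotonicity of its difference quotients propagates $u(z)\ge u(x)+p\cdot(z-x)$ from a short initial piece of the segment to its endpoint $y$. Thus $\pa u(E)$ is the same set for every Borel $E\subset\subset B_1\setminus\{0\}$, so the hypothesis that $u$ solves $\det\nabla^2 u=1$ in $B_1\setminus\{0\}$ gives that $\mu_u$ coincides with Lebesgue measure on $B_1\setminus\{0\}$.

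Finally, since $\mu_u$ is a Borel measure and $\{0\}$ is a single point, $\mu_u=\mu_u\,\lfloor\,(B_1\setminus\{0\})+\mu_u(\{0\})\,\delta_0$; absorbing the Lebesgue-null set $\{0\}$ into the first term gives $\mu_u=1+\mu_u(\{0\})\,\delta_0$ on $B_1$. By definition of the Monge-Amp\`ere measure, $\mu_u(\{0\})=|\pa u(\{0\})|_{\mathcal{H}^n}=|\pa u(0)|_{\mathcal{H}^n}$, which is the claimed identity. The main (and only non-bookkeeping) obstacle is the subdifferential localization of the middle paragraph; everything else follows from Proposition~\ref{prop:across} and the definition of $\det\nabla^2 u$ as a measure.
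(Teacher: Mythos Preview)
Your proof is correct and follows essentially the same approach as the paper: invoke Proposition~\ref{prop:across} to make $u$ convex on $B_1$, then read off the Monge--Amp\`ere measure from the definition. The paper's own proof is the one-line ``follows directly from Proposition~\ref{prop:across} and the definition of generalized solutions''; you have simply unpacked the bookkeeping, and your subdifferential localization paragraph is exactly the detail the paper is silently using.
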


\begin{proof}
The proposition follows directly from Proposition \ref{prop:across} and the definition of generalized solutions.
\end{proof}

Clearly, Proposition \ref{prop:sub} still holds when $1$ is replaced by any nonnegative bounded function.

Next, we recall the asymptotical behaviors of solutions of $\det \nabla^2 u=1$ in exterior domains near the infinity established in \cite{CL},
which will play a crucial role in our proofs.

\begin{thm}[Corollary 1.3 in \cite{CL}] \label{lem:*}Let $O$ be a bounded open convex subset of $\R^n$, and let $u\in C(\R^n\setminus \overline O)$ be
a generalized solution of
\[
\det \nabla^2 u=1\quad \mbox{in }\R^n\setminus \overline O.
\]
Then $u\in C^\infty(\R^n\setminus \overline O)$, and we have the following:
\begin{itemize}
\item[(i)] For $n\geq 3$, there exists some linear function $\ell(x)$, and $A\in\mathcal{A}$ such that
\[
\limsup_{|x|\to\infty}|x|^{n-2}|u(x)-(\frac 12 x^TAx+\ell(x))|<\infty,
\]
where $\mathcal{A}$ is the set of real $n\times n$ positive definite matrices with determinant $1$.

\item[(ii)] For $n=2$,  there exists some linear function $\ell(x)$, $d\in \R$, and $A\in\mathcal{A}$ such that
\[
\limsup_{|x|\to\infty}|x||u(x)-(\frac 12 x^TAx+d\log\sqrt{x^TAx}+\ell(x))|<\infty.
\]
\end{itemize}
\end{thm}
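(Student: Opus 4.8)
This is the Caffarelli--Li asymptotic theorem, so, following \cite{CL}, I would reconstruct its proof in four stages: (i) two-sided quadratic bounds, (ii) identification of the leading quadratic term by a blow-down, (iii) an iteration on dyadic annuli producing the sharp decay rate, and (iv) smoothness from Caffarelli's regularity theory. I use freely that the conclusion is unchanged under unimodular affine changes of the variable $x$ (which preserve $\det\nabla^2 u=1$) and under subtracting an affine function from $u$ (since $\ell$ is at our disposal), and I normalize $\overline O\subset B_{r_0}$. Stage (i) is already substantial: one must show that after a unimodular affine normalization there are $0<c_1\le c_2$ and $C$ with $c_1(|x|^2-C)\le u(x)\le c_2(|x|^2+C)$ for $|x|\ge 2r_0$; in particular $u$ is proper and bounded below, and after subtracting an affine function $u\ge 0$ near infinity. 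The upper bound is obtained by comparison of $u$ on large annuli against radial barriers $\tfrac12|x|^2+b$ solving $\det\nabla^2(\cdot)=1$; the lower bound from an Aleksandrov-type analysis of the sections $S_t=\{u<t\}$ (localized away from $O$), giving $|S_t|\le Ct^{n/2}$ once the John ellipsoid of a fixed section has been normalized to a ball by a unimodular map, then upgraded via convexity. Keeping these normalizing maps from degenerating as $t\to\infty$, i.e.\ bounding the eccentricity of the sections uniformly, is the delicate point here and is where the convexity of $O$ is genuinely used.

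For stage (ii) I would blow down: the rescalings $u_R(x):=R^{-2}u(Rx)$ are locally uniformly bounded convex solutions of $\det\nabla^2 u_R=1$ on $\R^n\setminus B_{r_0/R}$, hence precompact in $C^{0}_{loc}$, and a subsequential limit $u_\infty$ is convex on $\R^n\setminus\{0\}$ with $\det\nabla^2 u_\infty=1$ there, by weak continuity of the Monge--Amp\`ere measure. By Proposition~\ref{prop:across}, $u_\infty$ extends convexly across the origin; and since the Monge--Amp\`ere mass that $u_R$ puts on a small ball $B_\rho$ equals $\omega_n\rho^n+O(R^{-n})\to\omega_n\rho^n$ (the error being the fixed mass $u$ places on the hole), no atom forms at $0$, so $u_\infty$ is a global generalized solution of $\det\nabla^2 u_\infty=1$ in $\R^n$. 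By Caffarelli's extension \cite{Caffarelli} of the J\"orgens--Calabi--Pogorelov theorem (valid for all $n\ge 2$), $u_\infty$ is an elliptic paraboloid, so its quadratic part is $\tfrac12 x^TAx$ for some $A\in\mathcal A$; comparing consecutive scales and using convexity shows the whole family $u_R$ converges, so $A$ is well defined and $u(x)=\tfrac12 x^TAx+\ell(x)+o(|x|^2)$. A further unimodular change of variables lets us take $A=I$.

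Stage (iii) is the heart. Put $w=u-\tfrac12|x|^2$, so $w=o(|x|^2)$ and $\det(I+\nabla^2 w)=1$; the latter can be written $a^{ij}(x)\,\partial_{ij}w=0$ with $(a^{ij})=\int_0^1\operatorname{cof}(I+t\nabla^2 w)\,dt$, and interior $C^{2}$ estimates (Pogorelov, and Caffarelli's $C^{2,\alpha}$ theory) applied to the $u_R$ give $|\nabla^2 w(x)|\to 0$ as $|x|\to\infty$, so near infinity $w$ solves a uniformly elliptic equation whose coefficients tend to $\delta^{ij}$. I would then iterate over dyadic annuli $D_k=\{2^k\le|x|\le 2^{k+2}\}$: on $D_k$ replace $w$ by the $(a^{ij})$-harmonic and then the genuinely harmonic function with the same boundary values, the errors being controlled by the oscillation of $(a^{ij})$ on $D_k$ times $\operatorname{osc}_{D_k}w$; then expand the harmonic approximant and use the Monge--Amp\`ere equation to see that its Hessian is nearly trace-free and nearly zero, so that the quadratic parts peeled off at successive scales tend to $0$ geometrically while the affine parts converge to some $\ell_\infty$, the remaining pieces being $O(r^{2-n})$ for $n\ge 3$ (respectively $d\log r+O(r^{-1})$ for $n=2$, since the radial harmonic in the plane is $\log r$). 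Summing yields $|w(x)-\ell_\infty(x)|\le C|x|^{2-n}$ (resp.\ $|w(x)-d\log|x|-\ell_\infty(x)|\le C|x|^{-1}$); undoing the two unimodular normalizations and absorbing $\ell_\infty$ into $\ell$ gives the stated estimates, with $\log|x|$ becoming $\log\sqrt{x^TAx}$ for the appropriate $A\in\mathcal A$.

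Finally, stage (iv): by Caffarelli's theorem on the structure of non-strictly-convex points of a generalized solution with $\det\nabla^2 u$ pinched between positive constants, combined with the quadratic growth just established (which forces strict convexity near infinity and in particular prevents any contact segment from escaping to infinity), $u$ is strictly convex in $\R^n\setminus\overline O$; hence $u\in C^{1,\alpha}$ by Caffarelli and then $u\in C^\infty$ by the Evans--Krylov and Schauder bootstrap. I expect the main obstacle to be stage (iii): it is a perturbative, Schauder-type iteration whose small parameter --- the closeness of $(a^{ij})$ to the identity --- is itself controlled only by the decay one is trying to prove, so the bookkeeping must be arranged to close that loop. The eccentricity control in stage (i) and the convergence (not merely precompactness) of the blow-down in stage (ii) are the other genuinely non-routine points.
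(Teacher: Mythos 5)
The paper does not prove Theorem \ref{lem:*} at all: it is quoted verbatim as Corollary 1.3 of \cite{CL} and used as a black box, so there is no internal argument of the authors to compare yours against. Your four-stage outline is a faithful reconstruction of the Caffarelli--Li strategy (quadratic growth bounds via normalized sections; blow-down together with the J\"orgens--Calabi--Pogorelov theorem for generalized solutions \cite{Caffarelli} to identify the quadratic part; linearization plus a perturbative iteration to extract the linear term and the $O(|x|^{2-n})$, resp.\ $d\log$, remainder; strict convexity plus Evans--Krylov and Schauder for smoothness), and nothing in it is wrong in principle. Be aware, though, that the points you yourself flag as ``non-routine'' --- uniform eccentricity control of the sections, convergence (not merely subsequential compactness) of the blow-down to a single $A\in\mathcal{A}$, and the fact that the smallness driving your dyadic iteration is precisely the decay being proved --- are not side issues but the content of most of \cite{CL}. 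There the loop is closed in a definite order: one first establishes a fixed polynomial rate $|\nabla^2u(x)-A|\le C|x|^{-\varepsilon}$ by applying Caffarelli's interior $C^{2,\alpha}$ estimates to affinely renormalized solutions on dyadic annuli (this simultaneously settles the uniqueness of $A$; without a rate, comparing consecutive annuli only shows successive approximating matrices differ by $o(1)$, which does not give convergence), and only afterwards treats $E=u-\frac12 x^TAx$ as a solution of a uniformly elliptic equation whose coefficients converge at the rate $|x|^{-\varepsilon}$, to which a linear asymptotic lemma yields $\ell(x)+O(|x|^{2-n})$ for $n\ge3$ and the extra $d\log\sqrt{x^TAx}$ term for $n=2$ (the planar case is also available by complex-analytic methods \cite{FMM2,FMM1}). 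So your sketch identifies the correct skeleton and the correct danger points, but it defers rather than supplies the estimates that make the argument close; as a substitute for the citation it is an outline of \cite{CL}, not an independent proof.
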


With the help of Theorem \ref{lem:*}, we are ready to show Theorem \ref{thm a}.
\begin{proof}[Proof of Theorem \ref{thm a}] We shall also show the case $n=2$, which provides another proof of a theorem of J\"orgens in \cite{J2} mentioned in the introduction.

 Case 1: $n\geq 3$.
By Theorem \ref{lem:*}, $u$ is smooth in $\R^n\setminus\{0\}$, and after a suitable affine transformation and subtracting a linear function we can assume that
\[
\limsup_{|x|\to\infty}|x|^{n-2}|u(x)-\frac 12 |x|^2|<\infty.
\]
By Proposition \ref{prop:across} and Proposition \ref{prop:sub}, $u$ is convex in $\R^n$ and satisfies
\be\label{eq:global singular}
\begin{split}
\det \nabla^2 u=1+|\pa u(0)|_{\mathcal{H}^n}\delta_0 \quad \mbox{in }\R^n
\end{split}
\ee
in Alexandrov sense.
By the comparison principle (see \cite{G}), we have
\[
u(x)\le \frac 12 |x|^2 \quad\mbox{in }\R^n.
\]
In particular, $u(0)\le 0$. Hence we can choose $c\ge 0$ such that
\[
\limsup_{|x|\to\infty}|x|^{n-2}|v(x)-\frac 12 |x|^2|<\infty,
\]
where
\[
v(x):=\int_0^{|x|}(\tau^n+ c)^{\frac{1}{n}}\,\ud \tau+u(0).
\]
Thus,
\[
\det\nabla^2 u=\det\nabla^2 v=1\quad\mbox{in }\R^n\setminus\{0\},
\]
\[
v(0)=u(0)\quad\mbox{and}\quad\limsup_{|x|\to\infty}|x|^{n-2}|u(x)-v(x)|<\infty.
\]
It follows that for some positive definite matrix function $(a_{ij}(x))$,
\[
a_{ij}\nabla_{ij}w=0\ \mbox{in}\ \R^n\setminus\{0\},\ w(0)=0,\ \mbox{and}\ \limsup_{|x|\to\infty}|w(x)|=0,
\]
where $w=u-v$. By the maximum principle, $w\equiv 0$, i.e., $u\equiv v$.

Case 2: $n=2$. For simplicity, we let $a=|\pa u(0)|_{\mathcal{H}^n}$. We may assume that
\be\label{asym beha 2}
\limsup_{|x|\to\infty}|x||u(x)-\frac 12 |x|^2-d\log|x||<\infty
\ee
for some $d$. It follows from the proof of (1.9) in \cite{CL} that $d=\frac{a}{2\pi}$ (see also \eqref{value of d} in the next section). Let
\[
w(x):=\int_0^{|x|}\sqrt{s^2+a/\pi}\ud s.
\]
It is clear that $w$ satisfies \eqref{eq:global singular} and \eqref{asym beha 2}. By the comparison principle, $u\equiv w$.

Indeed, the proof given in Case 2 can also be applied to show Case 1.
\end{proof}

Theorem \ref{thm:e} is a consequence of Theorem \ref{thm a}.
\begin{proof}[Proof of Theorem \ref{thm:e}] It is clear that $u$ locally strictly convex in $\R^n\setminus\overline\om$. Consequently, since $u=\pa u/\pa\nu=0$ on $\pa\om$, we hace $u>0$ in $\R^n\setminus\overline\om$. Thus, if we extend $u$ to be identically zero (which is still denoted as $u$) in $\om$, then $u$ is convex in $\R^n$ and hence $\om$ is convex. Let $u^*$ be the Legendre transformation of $u$ given by $u^*(y)=\sup\{x\cdot y-u(x): x\in\R^n\}$ for $y\in\R^n$. Then $u^*$ is $C^2$ and locally convex in $\Omega^*:=\pa u(\R^n\setminus\overline\om)$,
and satisfies
$\det \nabla^2 u^*=1$ in $\Omega^*$. We claim that $\Omega^*=\R^n\setminus \{0\}$. Indeed, since $u$ is locally strictly convex in $\R^n\setminus\overline\om$ and $\nabla u=0$ on $\pa\om$, we have that $0\not\in\om^*$. Secondly, for every $p\in \R^n\setminus\{0\}$, we can choose a positive constant $C$ large enough such that $u(x)>l(x):=p\cdot x - C$. This is can be done because of the asymptotical behavior of $u$ in Theorem \ref{lem:*}. Decrease $C$ such that $u(x)$ touches $l(x)$ at $x^*$. Since $p\neq 0$, $x^*\in \R^n\setminus\overline\om$. The claim is proved. By Theorem \ref{thm a} for $n\ge 3$ and the theorem of J\"orgens in \cite{J2} for $n=2$, $\overline\om=\pa u^*(0)$ is an ellipsoid.
\end{proof}

\subsection{Asymptotical behaviors of solutions near the isolated singularity}\label{sec4}

\begin{thm} \label{thm:c}
Let $\om$ be a bounded strictly convex domain in $\R^n$ with $\pa \om \in C^4$ and $n\ge 2$, $f\in C^{1,1}(\overline \om)$, $f>0$ in $\overline \om$ and $\varphi\in C^4(\pa \om)$. For any
$x_0\in \om$ and $a>0$,  let $u\in C(\overline \om)$ be the unique generalized convex solution of the Dirichlet problem
\be \label{mae1}
\begin{split}
\begin{aligned}
\det \nabla^2 u&=f+a \delta_{x_0}&\quad& \mbox{in }\om,\\
u&=\varphi& \quad & \mbox{on } \pa \om.
\end{aligned}
\end{split}
\ee
Then $u\in C^{0,1}(\om) \cap C^{3,\al}_{loc}(\overline \om \setminus \{x_0\})$ for any $\al\in (0,1)$. Moreover, we have
\be\label{important esti}
|\nabla^2 u(x)|\leq \frac{C}{|x-x_0|},
\ee
where $C>0$ depends only on $\om, n, a, \min_{\overline \om}f,\|f\|_{C^{1,1}(\overline \om)}, \|\varphi\|_{C^4(\pa \om)}$  and $ dist(x_0,\pa \om)$.
\end{thm}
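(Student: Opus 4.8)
The plan is to work on punctured balls around $x_0$ and transfer the interior $C/\operatorname{dist}$-type estimate to the whole punctured domain by an affine normalization argument, combined with interior Pogorelov/Caffarelli–Schauder estimates and a barrier construction that controls the behavior near $x_0$. First I would invoke the classical solvability theory for the Monge–Ampère Dirichlet problem (Guan, Trudinger–Wang, Caffarelli) to get existence and uniqueness of the convex Alexandrov solution $u\in C(\overline\om)$ of \eqref{mae1}; away from the boundary the measure $f+a\delta_{x_0}$ restricted to $\om\setminus\{x_0\}$ is a $C^{1,1}$ density bounded between positive constants, so by Caffarelli's $C^{1,\alpha}$ and Schauder theory $u\in C^{3,\alpha}_{loc}(\om\setminus\{x_0\})$, and the $C^4$ boundary data plus $C^4$ strictly convex boundary give $u\in C^{3,\alpha}_{loc}(\overline\om\setminus\{x_0\})$ by Caffarelli–Nirenberg–Spruck / Trudinger–Wang global estimates. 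The Lipschitz bound $u\in C^{0,1}(\om)$ follows from convexity together with a comparison argument: construct an explicit radial subsolution with an isolated Dirac mass at $x_0$ (of the form $\int_0^{|x-x_0|}(\tau^n+c)^{1/n}\,\ud\tau$ rescaled, as in Theorem \ref{thm a}) and a smooth supersolution, pinching $u$ between them near $x_0$ so that the subgradient $\pa u(x_0)$ is a bounded convex body and $u$ is Lipschitz up to $x_0$.

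For the key estimate \eqref{important esti}, fix $x$ with $r:=|x-x_0|$ small (the region away from $x_0$ is handled by the already-established interior/global $C^2$ bounds). The idea is to rescale: set $u_r(y)=r^{-2}\bigl(u(x_0+ry)-\ell_r(y)\bigr)$ where $\ell_r$ is an affine function chosen so that $u_r\ge 0$ and $u_r(0)=0$; then on the annulus $\{1/2<|y|<2\}$ the function $u_r$ solves $\det\nabla^2 u_r = f(x_0+ry)$, a density still pinched between positive constants, with no Dirac mass in this annulus. If one can show that $u_r$ has bounded oscillation on $B_2$ (i.e. the section of $u$ at $x_0$ of "height $r^2$" has diameter comparable to $r$, uniformly in $r$), then Caffarelli's interior estimates applied on the annulus give $|\nabla^2 u_r|\le C$ there, which unwinds precisely to $|\nabla^2 u(x)|\le C/r$. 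The oscillation/non-degeneracy control is exactly where the Dirac mass helps: the lower bound $\det\nabla^2 u\ge f+a\delta_{x_0}$ forces the sections $S_h(x_0)=\{u<u(x_0)+h+\ell\}$ to shrink no faster than comparably to $h^{1/2}\times\cdots$ balanced by the extra Monge–Ampère mass $a$ concentrated at the center, which prevents the sections from becoming too flat in any direction; the upper bound $\det\nabla^2 u\le \Lambda$ together with convexity and the Lipschitz bound prevents them from being too round/large. Quantitatively this is where John's lemma on the normalized sections and the volume estimate $|S_h|^2 \asymp h^n \cdot (\text{mass in } S_h)$ (Caffarelli's density theorems) enter, yielding that the normalizing affine maps $T_h$ of the sections have determinant $\asymp h^{n/2}$ and, crucially, that their eccentricity stays bounded because of the point mass — this is the analogue, near an isolated Monge–Ampère singularity, of strict convexity.

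The main obstacle I expect is precisely this uniform control of the shape (eccentricity) of the sections of $u$ centered at $x_0$ down to scale $0$: without the Dirac mass one only has $\lambda\le\det\nabla^2 u\le\Lambda$ and sections can degenerate (the Pogorelov/Alexandrov examples), so the proof must genuinely exploit the concentrated mass $a>0$. Concretely I would: (1) show $\pa u(x_0)$ has nonempty interior and in fact contains a ball of radius $\asymp a^{1/n}$ (by inserting a cone/paraboloid-cap test function below $u$), (2) use this to bound sections from one side, (3) combine with the comparison against the explicit solution $\int_0^{|x-x_0|}(\tau^n+c)^{1/n}\,\ud\tau$ — which realizes the extremal rate $1/\operatorname{dist}$ — as the model to compare with via the Alexandrov/Caffarelli comparison principle, obtaining both $u\le$ (smooth supersolution) and $u\ge$ (this explicit subsolution with appropriate $c\asymp a$) on small balls, hence two-sided control of oscillation of the rescalings $u_r$. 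Once the normalized sections have bounded eccentricity, the rest is the standard Caffarelli interior $C^{2,\alpha}$ machinery applied on annular regions plus scaling, and tracking the dependence of constants on $\om,n,a,\min_{\overline\om}f,\|f\|_{C^{1,1}},\|\varphi\|_{C^4}$ and $\operatorname{dist}(x_0,\pa\om)$ is routine bookkeeping through each step.
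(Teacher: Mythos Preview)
Your overall strategy for existence, uniqueness, boundary regularity, and the Lipschitz bound is fine and matches the paper in spirit. The gap is in the heart of the argument for \eqref{important esti}: the parabolic rescaling $u_r(y)=r^{-2}\bigl(u(x_0+ry)-\ell_r(y)\bigr)$ does \emph{not} have bounded oscillation on $B_2$, so Caffarelli's interior estimates cannot be applied on the annulus as you propose. The reason is that the Dirac mass forces $u$ to have a genuine \emph{conical} (Lipschitz but not $C^{1,1}$) singularity at $x_0$: for the model solution $v(x)=\int_0^{|x|}(\tau^n+c)^{1/n}\,\ud\tau$ one has $v(x)-v(0)\sim c^{1/n}|x|$ near $0$, so $v_r(y)\sim c^{1/n}r^{-1}|y|$ on $B_2$, which blows up. Equivalently, the section of $u$ at $x_0$ of height $h$ has diameter comparable to $h$, not to $h^{1/2}$; your claimed relation ``section of height $r^2$ has diameter $\sim r$'' is exactly the opposite of what the Dirac mass produces. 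So the step where you say the point mass ``prevents the sections from becoming too flat'' is aimed at the wrong difficulty: eccentricity is not the obstruction, the scaling exponent is.

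The paper avoids this scaling mismatch entirely by a different, more direct route. It first regularizes $a\delta_{x_0}$ by smooth bumps $\eta_\va$ and obtains solutions $u_\va$ with uniform $C^{0,1}(\overline\om)$ and boundary $C^2$ bounds (via barriers and Guan's boundary estimates). Then, on each half-space $\{x_n>\va'\}$ (where the equation is just $\det\nabla^2 u_\va=f$ with smooth positive $f$), it runs a Pogorelov maximum-principle argument on the auxiliary quantity $(x_n-\va')\,u_{ii}\,e^{\frac12 u_i^2}$; the linear weight $(x_n-\va')$ vanishing on the bounding hyperplane is exactly what produces the factor $1/\operatorname{dist}$ after letting $\va'\to 0$ and $\va\to 0$. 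No section-shape analysis near the singular point is needed. If you want to rescue a rescaling approach, you would have to center the blow-up at the smooth point $x$ (not at $x_0$) and prove a quantitative lower bound on the height $h$ for which $S_h(x)\subset B_{|x-x_0|/2}(x)$; but obtaining $h\gtrsim |x-x_0|$ uniformly is essentially equivalent to the estimate you are trying to prove.
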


The proof of Theorem \ref{thm:c} will be postponed to be shown in Section \ref{asymp-line} (see Theorem \ref{thm:line}). We remark that $\pa u(x_0)$ is indeed a compact convex set in $\R^n$. Moreover, the Lebesgue measure $|\pa u(x_0)|_{\mathcal{H}^n}=a$. Since $a>0$, $u$ has a tangent cone at $x_0$ whose level sets are convex.  In \cite{Savin}, Savin proved that those level sets are $C^{1,1}$ regular.

\begin{cor}\label{cor:metric compl}
Let $u$ be a smooth convex solution of $\det \nabla^2 u=1$ in $B_2\setminus \{0\}\subset \R^n$ with $n\ge 2$, and $g=u_{ij}\ud x_i\ud x_j$ be a Riemannian metric. Then
the completion of $(\overline{B_1}\setminus \{0\}, d_g)$ is equal to $\overline{B_1}$, where $d_g$ is the induced distance of $g$.
\end{cor}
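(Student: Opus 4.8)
The goal is to show that if we equip the punctured ball $\overline{B_1}\setminus\{0\}$ with the Riemannian metric $g = u_{ij}\,dx_i\,dx_j$ coming from a smooth convex solution of $\det\nabla^2 u = 1$ on $B_2\setminus\{0\}$, then the metric completion is just $\overline{B_1}$, i.e. adding back the single point $0$ (with its Euclidean neighborhood structure) completes the space. Since $g$ is a genuine Riemannian metric on $\overline{B_1}\setminus\{0\}$ (the Hessian of a strictly convex function is positive definite), away from $0$ the induced distance $d_g$ is locally equivalent to the Euclidean distance, so the only issue is the behavior near the puncture. Concretely, the plan is to prove two things: (a) any sequence $x_k \to 0$ in the Euclidean sense is Cauchy in $d_g$, so $0$ is in the completion; and (b) $d_g(x,0) \to 0$ as $x\to 0$, and more generally $d_g$ restricted to $\overline{B_1}\setminus\{0\}$ extends continuously to $\overline{B_1}$ inducing the Euclidean topology, so nothing more than the single point is added.

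The key input is the estimate \eqref{eq:mae-sigular-line-estimate} of Theorem \ref{thm:asymptotical behavior} (equivalently \eqref{important esti} of Theorem \ref{thm:c}) applied with $\Gamma = \{0\}$: it gives $|\nabla^2 u(x)| \le C/|x|$ on, say, $B_1\setminus\{0\}$. First I would use this to estimate the $g$-length of a radial segment from a point $x$ with $|x| = r$ to a point at distance $\rho \to 0$ along the ray through $x$. Along such a segment the $g$-speed is $\sqrt{u_{ij}\dot\gamma^i\dot\gamma^j} \le \sqrt{|\nabla^2 u|}\,|\dot\gamma| \le \sqrt{C/t}$ where $t$ is the Euclidean distance to $0$, so the $g$-length of the segment from radius $\rho$ to radius $r$ is at most $\int_\rho^r \sqrt{C/t}\,dt = 2\sqrt{C}(\sqrt{r}-\sqrt{\rho}) \le 2\sqrt{C}\,\sqrt r$. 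This bound is uniform in $\rho$ and tends to $0$ as $r\to 0$. Hence for any $x$ with $|x|$ small, $d_g(x, y) \le 2\sqrt{C}\sqrt{|x|}$ for all $y$ on the ray between $x$ and $0$, which immediately shows: (i) a Euclidean-Cauchy sequence approaching $0$ is $d_g$-Cauchy (connect consecutive terms through $0$, or compare each to the origin), so the completion contains a point "$0$"; and (ii) the function $x \mapsto d_g(x,\bar 0)$, defined as the limit along such sequences, is well-defined, continuous, and vanishes at $0$.

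Next I would verify that the completion is exactly $\overline{B_1}$ and not larger — that is, that distinct Euclidean limit points give distinct points of the completion, and that the $d_g$-topology agrees with the Euclidean topology. Away from $0$ this is standard since $g$ is smooth and nondegenerate on the compact set $\overline{B_1}\setminus B_\varepsilon$ for each $\varepsilon$, giving two-sided bounds $c_\varepsilon|x-y| \le d_g(x,y) \le C_\varepsilon|x-y|$ there. Near $0$, the upper bound $d_g(x,\bar 0)\le 2\sqrt C\sqrt{|x|}$ handles one direction; for the lower bound one needs $d_g(x,\bar 0) \ge \omega(|x|)$ for some modulus $\omega > 0$, which would follow if $g$ is bounded below near $0$, but in general $\det g = \det\nabla^2 u = 1$ only controls the product of eigenvalues, not each one, so $g$ could degenerate in some directions. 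However, for the completion statement we do not actually need a quantitative lower bound: it suffices that for fixed $x \ne 0$ and any sequence $y_k \to 0$ Euclideanly, $\liminf_k d_g(x,y_k) > 0$. This follows because such a path must exit a fixed small $g$-ball around $x$ on which, by smoothness and nondegeneracy of $g$ away from $0$, $g$-length controls Euclidean length from below; so the path has $g$-length bounded below by a positive constant depending on $x$. Therefore the completion is set-theoretically $(\overline{B_1}\setminus\{0\}) \cup \{\bar 0\}$, and the map to $\overline{B_1}$ is a continuous bijection; combined with the upper bound near $0$ it is a homeomorphism.

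The main obstacle, and the step requiring the most care, is the passage from the length estimate to genuine \emph{distance} estimates and to the topological identification near the puncture: geodesics or near-minimizing paths need not be radial, so one must argue that the competitor radial path (or a short nearly-radial one) suffices for the upper bound, and that no path can "shortcut" to $0$ and back for the lower bound. The upper bound is easy since we exhibit an explicit path; the subtle direction is ensuring the completion does not collapse points or add spurious points, which I would handle via the local nondegeneracy of $g$ on compact subsets of $\overline{B_1}\setminus\{0\}$ as indicated above, without needing any lower control on $g$ at $0$ itself. Once both bounds are in hand, continuity of $d_g$ on $\overline{B_1}$ and the identification of the completion with $\overline{B_1}$ follow by routine metric-space arguments.
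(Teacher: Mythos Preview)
Your approach is essentially the same as the paper's: both use the Hessian bound \eqref{important esti} to show that a radial segment from any $P\in B_1$ to $0$ has $g$-length controlled by $\int_0^1 t^{-1/2}\,\ud t<\infty$, and then conclude that $d_g$ extends to $\overline{B_1}$. The paper's proof is much terser---it simply asserts that the extension of $d_g$ to $\overline{B_1}$ is then elementary---whereas you spell out more carefully why the completion adds exactly one point and why the $d_g$-topology agrees with the Euclidean one near $0$; this extra care is reasonable but not a different strategy.
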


\begin{proof}
It follows from \eqref{important esti} that for every $P\in B_{1}$ the length of the line segment connecting $P$ and $0$ under $g$ is less than $C\int_{0}^1x^{-1/2}\ud x$, which is finite. Thus, $d_g$ can be defined in the whole ball $\overline B_1$. Lastly, it is elementary to check that the extended distance function $d_g$ satisfies the triangle inequality.
\end{proof}

The geometric motivations of Theorem \ref{thm:c} and Corollary \ref{cor:metric compl} can be found in Gross-Wilson \cite{GrW}, Loftin \cite{L1}, Loftin-Yau-Zaslow \cite{LYZ} and references therein.

\section{Multiple isolated singularities}\label{sec3}

\subsection{The space of global solutions}

In this section, we study solutions of Monge-Amp\`ere equations with multiple isolated singularities. The solution counting result in Theorem \ref{them:elementary1} is restated in a more explicit way in Corollary \ref{cor:multip point}.

\begin{prop}\label{prop:to the main result}
Let $k\ge 1$. For any given positive numbers $a_1, \cdots, a_k$ and points $P_1,\cdots, P_k$ in $\R^n$,
there exists a unique generalized solution $u$ of 
\be\label{eq:more pts}
\det \nabla^2 u=1+\sum_{i=1}^k a_i \delta_{P_i}\quad \mbox{in }\R^n
\ee
 with prescribed asymptotical behavior
\be\label{eq:prescrib behavior}
\limsup_{|x|\to \infty}|x|^{n-2}|u(x)-\frac12 |x|^2|<\infty.
\ee
\end{prop}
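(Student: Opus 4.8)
The plan is to establish existence and uniqueness separately, using the asymptotic-behavior machinery of Caffarelli–Li (Theorem \ref{lem:*}) together with the comparison principle for Monge–Amp\`ere measures. For uniqueness, suppose $u_1$ and $u_2$ are two generalized solutions of \eqref{eq:more pts} with the prescribed behavior \eqref{eq:prescrib behavior}. Away from the points $P_1,\dots,P_k$ both are smooth by the interior regularity theory for $\det\nabla^2 u=1$ (Pogorelov/Caffarelli), and outside the convex hull of $\{P_1,\dots,P_k\}$ both are strictly convex, hence smooth, by \cite{CL}. Since $u_1$ and $u_2$ have the same Monge–Amp\`ere measure on $\R^n$ and the same quadratic asymptotics, the difference $w=u_1-u_2$ satisfies a linear uniformly elliptic equation $a_{ij}\nabla_{ij}w=0$ on $\R^n\setminus\{P_1,\dots,P_k\}$ with $\limsup_{|x|\to\infty}|w(x)|=0$ (after matching the sub-leading linear terms using part (i) of Theorem \ref{lem:*}, exactly as in the proof of Theorem \ref{thm a}). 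The isolated points $P_i$ are removable for bounded solutions of such an equation in dimension $n\ge 3$ (a capacity argument, or one notes that $w$ extends continuously across each $P_i$ by Proposition \ref{prop:across} applied to $u_1,u_2$ and the singularities have zero harmonic capacity), so $w$ attains its sup and inf, forcing $w\equiv 0$. This mirrors Case 1 of the proof of Theorem \ref{thm a} with $\{0\}$ replaced by $\{P_1,\dots,P_k\}$.

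For existence I would use the Perron method / exhaustion by large balls. Fix $R$ large so that $B_R$ contains all the $P_i$ and on $\partial B_R$ set the boundary data $\varphi_R(x)=\tfrac12|x|^2$. By the solvability theory for the Dirichlet problem with measure data (e.g. \cite{TW1}, or the approximation scheme smoothing each $a_i\delta_{P_i}$), there is a unique convex Alexandrov solution $u_R\in C(\overline{B_R})$ of $\det\nabla^2 u_R = 1+\sum_i a_i\delta_{P_i}$ in $B_R$ with $u_R=\tfrac12|x|^2$ on $\partial B_R$. The comparison principle gives a uniform lower bound: $u_R \ge \tfrac12|x|^2 - M$ for a constant $M$ independent of $R$ (compare with radial solutions centred at each $P_i$ of the type $\int_0^{|x-P_i|}(\tau^n+c_i)^{1/n}\,d\tau$ with $c_i$ chosen so the Monge–Amp\`ere mass at $P_i$ is $a_i$), and trivially $u_R\le \tfrac12|x|^2$ in $B_R$; monotonicity of $u_R$ in $R$ (again comparison) then yields a locally uniformly convergent limit $u=\lim_{R\to\infty}u_R$, a convex Alexandrov solution of \eqref{eq:more pts} on all of $\R^n$ sandwiched as $\tfrac12|x|^2-M\le u\le\tfrac12|x|^2$.

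It remains to upgrade the crude bound $|u-\tfrac12|x|^2|\le M$ to the precise asymptotics \eqref{eq:prescrib behavior}. Here I invoke Theorem \ref{lem:*}(i) directly: applied to $u$ on $\R^n\setminus\overline{O}$ with $O$ a large ball containing all $P_i$, it furnishes $A\in\mathcal A$ and a linear $\ell$ with $\limsup_{|x|\to\infty}|x|^{n-2}|u(x)-(\tfrac12 x^TAx+\ell(x))|<\infty$; the two-sided bound $\tfrac12|x|^2-M\le u\le\tfrac12|x|^2$ forces $A=I$ and $\ell\equiv 0$, which is exactly \eqref{eq:prescrib behavior}. I expect the main technical obstacle to be the bookkeeping in the existence step — producing a clean $R$-independent lower barrier and the monotonicity needed for the limit — together with making sure the limiting measure picks up exactly the atoms $a_i\delta_{P_i}$ and no defect mass escapes; once the crude sandwich bound is in hand, the refinement via Theorem \ref{lem:*} and the uniqueness argument are essentially the same as in Theorem \ref{thm a}.
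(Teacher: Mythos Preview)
Your overall architecture matches the paper's: solve the Dirichlet problem on $B_R$ with boundary data $\tfrac12 R^2$, trap $u_R$ between $\tfrac12|x|^2$ and a subsolution, extract a limit as $R\to\infty$. The uniqueness argument via linearization is more elaborate than necessary (the comparison principle for Alexandrov solutions on large balls suffices directly, which is why the paper simply says ``the uniqueness is clear''), but it is not wrong.

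The genuine gap is your lower barrier. A radial solution $w_i(x)=\int_0^{|x-P_i|}(\tau^n+c_i)^{1/n}\,d\tau$ with mass $a_i$ at $P_i$ satisfies $\det\nabla^2 w_i = 1 + a_i\delta_{P_i} \le 1+\sum_j a_j\delta_{P_j} = \det\nabla^2 u_R$, so after matching boundary data comparison gives $u_R \le w_i$, an \emph{upper} bound --- the opposite of what you need. You give no mechanism for combining the $w_i$ into a genuine subsolution of the full equation with all $k$ atoms, and this is exactly the non-trivial step. The paper's device is to take each $u_i$ with the \emph{inflated} mass $k^n a_i$ at $P_i$, set $\underline u = \tfrac{1}{k}\sum_i u_i$, use concavity of $(\det)^{1/n}$ to get $\det\nabla^2\underline u\ge 1$ away from the $P_i$, and observe that the scaling $\tfrac{1}{k}$ shrinks the subdifferential at $P_i$ by a factor $k^{-n}$, returning exactly the mass $a_i$. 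This $\underline u$ also inherits the precise behavior \eqref{eq:prescrib behavior} from each $u_i$, so the sandwich $\underline u\le u\le \tfrac12|x|^2$ already yields $|u-\tfrac12|x|^2|=O(|x|^{2-n})$ with no appeal to Theorem \ref{lem:*} needed.

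A smaller point: in your final refinement, the two-sided bound $|u-\tfrac12|x|^2|\le M$ together with Theorem \ref{lem:*} forces $A=I$ and the linear part of $\ell$ to vanish, but not the constant term; you would end up with $|u-\tfrac12|x|^2-\tilde c|=O(|x|^{2-n})$ and must replace $u$ by $u-\tilde c$. This is exactly what the paper does in the more general Theorem \ref{existence1}, so it is a fixable oversight rather than a real error.
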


\begin{proof} The uniqueness is clear, and we shall show the existence. The existence can actually follow from Theorem \ref{existenceY}, but we would like to provide a simpler proof for this particular case.

(i) We claim that there exists a unique solution $u_i$ of
\be\label{eq:i}
\det \nabla^2 u_i=1+k^n a_i \delta_{P_i} \quad \mbox{in }\R^n,
\ee
with the asymptotical behavior \eqref{eq:prescrib behavior}. We only need to show the existence. As in the previous section, one can find a radial symmetric (w.r.t. $P_i$) solution $v_i(x)=v_i(|x-P_i|)$ of \eqref{eq:i} satisfying
\[
\limsup_{|x|\to \infty}|x|^{n-2}|v_i(x)-\frac12 |x-P_i|^2|<\infty.
\]
Then $u_i=v_i+P_i\cdot x-\frac12 |P_i|^2$ is a desired solution.

(ii) Let $\underline{u}=\frac{1}{k}\sum_{i=1}^k u_i$. It is clear that $\underline{u}$ satisfies \eqref{eq:prescrib behavior}.
If $x\neq P_i$ for all $i$, then
\[
(\det\nabla^2 \underline{u}(x))^{\frac{1}{n}}\geq \frac{1}{k}\sum_{i=1}^k  (\det\nabla^2 u_i(x))^{\frac{1}{n}} =1.
\]
Hence $\det\nabla^2 \underline{u}(x) \geq 1 $ in $\R^n$. For any Borel set $E\subset \R^n$, let $I:=\{i: P_i\in E\}$. It follows that,  for sufficiently small $\va>0$,
\[
\begin{split}
|\pa \underline{u}(E)|_{\mathcal{H}^n}&= |\pa \underline{u}(E\setminus \cup_{i\in I} B_\va(P_i))|_{\mathcal{H}^n} +
\sum_{i\in I}|\pa \underline{u}(B_\va(P_i)\cap E)|_{\mathcal{H}^n} \\&
\geq|E\setminus \cup_{i\in I} B_\va(P_i)|_{\mathcal{H}^n}+ \sum_{i\in I}a_i.
\end{split}
\]
Sending $\va \to 0$, we have $|\pa \underline{u}(E)|_{\mathcal{H}^n}\geq |E|+\sum_{i\in I}a_i$. By the arbitrary choice of $E$ and the definition of Alexandrov solution, we verified
\[
\det \nabla^2 \underline{u}\geq 1+\sum_ia_i\delta_{P_i} \quad \mbox{in }\R^n.
\]
On the other hand, by the comparison principle, we have
\[
\underline{u}(x)\le \frac12 |x|^2.
\]

(iii) Choosing $R$ large such that $\{P_1,\cdots, P_k\}\subset B_R(0)$. Let $u_m$, $m=1,2,3,\cdots$, be the convex generalized solution of
\[
\begin{cases}
\begin{aligned}
\det\nabla^2 u_m&=1+\sum_ia_i\delta_{P_i}& \quad &\mbox{in }B_{R+m},\\
u_m&=\frac12 (R+m)^2& \quad &\mbox{on }\pa B_{R+m}.
\end{aligned}
\end{cases}
\]
By the comparison principle, we have
\[
\underline{u}(x)\le u_m\le \frac12 |x|^2 \quad \mbox{in } B_{R+m}.
\]
Since $u_m$ is convex, after passing a subsequence, $u_m$ locally uniformly converges to some convex function $u$ in $\R^n$. Thus, $u$ satisfies \eqref{eq:more pts} and \eqref{eq:prescrib behavior}.
\end{proof}

\begin{thm} \label{them:elementary} (i) Suppose that $P_1,\cdots, P_k\in \R^n$, $k\geq 1$, $n\ge 2$ and $u$ is a generalized solution of \eqref{eq:many singularity1}. Then $u$ can be uniquely extended to be a convex function in $\R^n$ and satisfies \eqref{eq:more pts},
where $a_i=|\pa u(P_i)|_{\mathcal{H}^n}$. Furthermore, there exists $A\in\mathcal{A}$ and a linear function $\ell(x)$ such that
\be \label{eq:b}
\begin{split}
\limsup_{|x|\to \infty}|x|^{n-2}|u(x)-(\frac12 x^TAx+\ell(x))|<\infty& \quad \mbox{if }n\geq 3,\\
\limsup_{|x|\to \infty}|x||u(x)-(\frac12 x^TAx+d\log\sqrt{x^TAx}+\ell(x))|<\infty& \quad \mbox{if }n=2,
\end{split}
\ee
where 
\be\label{value of d}
d=\frac{1}{2\pi}\sum_{i=1}^k a_i.
\ee

(ii) Conversely, given $a_1,\cdots a_k\in [0,\infty)$, $P_1,\cdots, P_k\in \R^n$, $A, \ell$ as above, there exists a unique generalized solution $u$ of \eqref{eq:more pts} with the asymptotical behavior \eqref{eq:b}.
\end{thm}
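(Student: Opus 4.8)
The plan is to split the proof along the two parts. For Part~(i), I combine the extension results of Section~\ref{sec2} with the Caffarelli--Li exterior expansion (Theorem~\ref{lem:*}) and, when $n=2$, a flux computation; for Part~(ii), I obtain existence from the normalized case (Proposition~\ref{prop:to the main result}) by an affine change of variables, and uniqueness from the comparison principle.

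For Part~(i): around each $P_i$ pick a small ball disjoint from the other punctures and apply Proposition~\ref{prop:across} (after translating and rescaling) to extend $u$ across $P_i$; since a locally convex function on the convex set $\R^n$ is convex, $u$ extends to a convex function on $\R^n$, uniquely, because a finite convex function is continuous. By the definition of the Monge--Amp\`ere measure, as in Proposition~\ref{prop:sub}, this extension satisfies \eqref{eq:more pts} with $a_i=|\pa u(P_i)|_{\mathcal H^n}$. Choosing $R$ with all $P_i\in B_R$ and applying Theorem~\ref{lem:*} to $u$ on $\R^n\setminus\overline{B_R}$ produces $A\in\mathcal A$, a linear $\ell$, and (for $n=2$) a constant $d$ realizing \eqref{eq:b}. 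It remains to identify $d$ when $n=2$, which I would do by balancing. For $R$ large, $\mathcal D_R:=\{x^TAx<R^2\}$ contains all $P_i$ and, since $\det A=1$, has area $\pi R^2$, so the Alexandrov property gives $|\pa u(\mathcal D_R)|_{\mathcal H^2}=\int_{\mathcal D_R}\det\nabla^2 u=\pi R^2+\sum_i a_i$. On the other hand, by \cite{CL} $u$ is smooth and strictly convex outside the convex hull of $\{P_1,\dots,P_k\}$, so for $R$ large $\nabla u(\pa\mathcal D_R)$ is a Jordan curve bounding $\pa u(\mathcal D_R)$ up to a null set, and differentiating \eqref{eq:b} gives $\nabla u(x)=Ax\,(1+\tfrac{d}{x^TAx})+\nabla\ell+O(|x|^{-2})$ on $\pa\mathcal D_R$; writing $x=A^{-1/2}y$ with $|y|=R$, this curve is, up to area error $o(1)$, the image under $A^{1/2}$ of the circle of radius $R+d/R$, enclosing area $\pi(R+d/R)^2=\pi R^2+2\pi d+o(1)$. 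Letting $R\to\infty$ in $\pi R^2+\sum_i a_i=\pi R^2+2\pi d+o(1)$ yields \eqref{value of d}; this is the content of the proof of (1.9) in \cite{CL}.

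For Part~(ii), existence (take $n\ge3$; $n=2$ below): discard the $P_i$ with $a_i=0$, which are removable in the Alexandrov sense, and if all $a_i=0$ take $u=\tfrac12 x^TAx+\ell$. Otherwise set $B=A^{1/2}\in\mathcal A$, let $\tilde u$ be the solution of \eqref{eq:more pts} with masses $a_i$ at the points $BP_i$ and asymptotics \eqref{eq:prescrib behavior} furnished by Proposition~\ref{prop:to the main result}, and put $u(x)=\tilde u(Bx)+\ell(x)$. Since $\det B=1$, $\det\nabla^2 u(x)=\det\nabla^2\tilde u(Bx)$, which is $1$ off $\{B^{-1}(BP_i)\}=\{P_i\}$, while $|\pa u(P_i)|_{\mathcal H^n}=(\det B)\,|\pa\tilde u(BP_i)|_{\mathcal H^n}=a_i$, so $u$ solves \eqref{eq:more pts}; and $\tilde u(Bx)=\tfrac12|Bx|^2+O(|Bx|^{2-n})=\tfrac12 x^TAx+O(|x|^{2-n})$, giving \eqref{eq:b}. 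For $n=2$ one runs the same construction with the upper barrier $\tfrac12 x^TAx+d\log\sqrt{x^TAx}+\ell$, $d=\tfrac1{2\pi}\sum_i a_i$, replacing $\tfrac12|x|^2$, or simply invokes the two-dimensional theory (\cite{GMM}, \cite{J2}). Uniqueness: if $u,v$ both solve \eqref{eq:more pts} with the same $A,\ell$, then by Part~(i) they also share the same $d$ when $n=2$, so \eqref{eq:b} forces $u-v\to0$ at infinity; for $R$ large enough to contain all $P_i$ we have $\det\nabla^2 u=\det\nabla^2 v$ as measures in $B_R$, and the comparison principle for Alexandrov solutions (\cite{G}) gives $\sup_{B_R}|u-v|\le\sup_{\pa B_R}|u-v|\to0$ as $R\to\infty$, hence $u\equiv v$.

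The step I expect to be the crux is the identification of $d$ in Part~(i): it is the only place where the qualitative expansion of Theorem~\ref{lem:*} must be sharpened enough to read off a conserved flux, and where the global structure of the subdifferential map (monotonicity together with strict convexity outside the convex hull from \cite{CL}) genuinely enters. Everything else is bookkeeping built on the propositions of Section~\ref{sec2} and the comparison principle.
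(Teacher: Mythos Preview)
Your proposal is correct and follows essentially the same route as the paper. The paper's own proof is extremely terse: for $n=2$ it cites \cite{GMM} outright; for $n\ge 3$ it says Part~(i) follows from Proposition~\ref{prop:sub} and Theorem~\ref{lem:*}, that \eqref{value of d} is proved as (1.9) in \cite{CL}, and that Part~(ii) follows from Proposition~\ref{prop:to the main result}. You have simply unpacked these citations---spelling out the local extension via Proposition~\ref{prop:across}, the flux/area-balance computation behind \eqref{value of d}, the affine change of variables $B=A^{1/2}$ reducing Part~(ii) to Proposition~\ref{prop:to the main result}, and the comparison-principle uniqueness---so there is no substantive divergence in strategy.
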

\begin{proof}
When $n=2$, the above theorem has been proved in \cite{GMM}. We shall prove the case $n\ge 3$. The first part follows from Proposition \ref{prop:sub} and Theorem \ref{lem:*}. The proof of \eqref{value of d} is the same as that of (1.9) in \cite{CL} and we omit it here.
The second part follows easily from Proposition \ref{prop:to the main result}.
\end{proof}

Let $\mathscr{C}_k$ be the set of all generalized solutions of \eqref{eq:many singularity1} with $k$ distinct non-removable singular points, and $\mathscr{C}_k'$ be the set $\mathscr{C}_k$ modulo the affine equivalence. 
\begin{cor}\label{cor:multip point} Let $k\ge 2$ be an integer. Then for every $ u\in \mathscr{C}_k$, there exists a generalized solution $\tilde{u}$ of
\be\label{eq:c}
\det \nabla^2 \tilde{u}=1+\delta_0+ \sum_{i=1}^{k-1} a_i\delta_{\tilde P_i} \quad \mbox{in }\R^n,
\ee
with the behavior
\be\label{eq:d}
\begin{split}
\limsup_{|x|\to \infty}|x|^{n-2}|\tilde u(x)-\frac12 |x|^2|<\infty\\
\end{split}
\ee
such that $u$ is affine equivalent to $\tilde{u}$, where $a_1,\cdots, a_{k-1}\in (0,\infty)$ and $\tilde P_1, \cdots, \tilde P_{k-1}$ are some distinct points in $\R^n$ with $\tilde P_i\neq 0, i=1,\cdots, k-1$. 

Consequently, $\mathscr{C}_k'$ equals to the set of all solutions of \eqref{eq:c} and \eqref{eq:d} modulo the orthogonal group $O(n)$ and the symmetric group $S_{k-1}$, which can be identified as an orbifold of dimension $d_{n,k}$ , where $d_{n,k}$ is given in \eqref{eq:orbifold dim}.
\end{cor}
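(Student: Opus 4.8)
\textbf{Proof proposal for Corollary \ref{cor:multip point}.}

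The plan is to use the affine invariance of the Monge-Amp\`ere equation together with Theorem \ref{them:elementary} to put every element of $\mathscr{C}_k$ into a canonical form, and then to count the remaining parameters. Starting from $u\in\mathscr{C}_k$, Theorem \ref{them:elementary} tells us that $u$ extends to a convex solution of $\det\nabla^2 u=1+\sum_{i=1}^k a_i\delta_{P_i}$ with $a_i>0$ (all $k$ singularities being non-removable) and with the asymptotic expansion \eqref{eq:b} governed by some $A\in\mathcal{A}$ and linear function $\ell$. First I would apply the affine change $u\mapsto (\det S)^{-2/n}u(Sx+b)-\ell'(x)$: choosing $S$ to symmetrize $A$ to the identity (possible since $A$ is positive definite with determinant $1$, and any such $A=S^tS$ with $\det S=1$), subtracting off the linear part so that the new solution has the clean behavior \eqref{eq:d}, and translating so that $P_1\mapsto 0$. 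The remaining $k-1$ points become distinct points $\tilde P_i\neq 0$, and the coefficients $a_i$ are unchanged (the delta weights $|\pa u(P_i)|_{\mathcal{H}^n}$ are affine-invariant up to the normalization $\det S=1$, which we have imposed). By Proposition \ref{prop:to the main result} (or Theorem \ref{them:elementary}(ii)), the solution $\tilde u$ of \eqref{eq:c}--\eqref{eq:d} with these data exists and is unique, so the correspondence $u\mapsto\tilde u$ is well defined on the level of the listed data.

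Next I would identify precisely the residual freedom. The normalization above uses up the $GL(n)$ action except for the stabilizer of the identity matrix, which is $O(n)$, together with the translation freedom (used to send $P_1$ to $0$) and the labeling freedom among the remaining $k-1$ singular points, which is $S_{k-1}$. Hence $\mathscr{C}_k'$ is in bijection with the set of tuples $(\tilde P_1,a_1),\dots,(\tilde P_{k-1},a_{k-1})$ with $\tilde P_i\in\R^n\setminus\{0\}$ distinct and $a_i>0$, modulo the diagonal $O(n)$-action on the points and the $S_{k-1}$-action permuting the pairs. I would present this quotient explicitly: the raw parameter space is an open subset of $(\R^n\setminus\{0\})^{k-1}\times(0,\infty)^{k-1}$ of dimension $(k-1)(n+1)$, and one then subtracts the dimension of a generic $O(n)$-orbit. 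When the points $\tilde P_1,\dots,\tilde P_{k-1}$ span a subspace of dimension $\min(k-1,n)$, the generic stabilizer in $O(n)$ has dimension $\binom{n-\min(k-1,n)}{2}$ (rotations fixing the spanned subspace pointwise), so the $O(n)$-orbit has dimension $\binom{n}{2}-\binom{n-\min(k-1,n)}{2}$. A short computation then gives
\[
d_{n,k}=(k-1)(n+1)-\binom{n}{2}+\binom{n-\min(k-1,n)}{2},
\]
which equals $\frac{(k-1)(k+2)}{2}$ when $k-1\le n$ (using $\min(k-1,n)=k-1$ and simplifying $\binom{n}{2}-\binom{n-k+1}{2}=(k-1)n-\frac{(k-1)(k-2)}{2}$) and equals $(k-1)(n+1)-\frac{n(n-1)}{2}$ when $k-1>n$. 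The $S_{k-1}$-action is finite and does not affect the dimension, only the orbifold (rather than manifold) structure; the $O(n)$-action likewise introduces orbifold points where the stabilizer jumps, for instance along configurations with extra symmetry. This is what is meant by "identified as an orbifold of dimension $d_{n,k}$".

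The main obstacle, and the point I would treat most carefully, is the verification that the weights $a_i$ are genuinely free parameters and that the map to the listed normal form is a bijection onto its image --- in other words, that two solutions in normal form \eqref{eq:c}--\eqref{eq:d} with the same $(\tilde P_i, a_i)$ up to $O(n)\times S_{k-1}$ are affine equivalent, and conversely. The forward direction is the uniqueness in Proposition \ref{prop:to the main result}; the converse requires checking that an affine map carrying one normalized solution to another must preserve the normalizations \eqref{eq:d} (forcing the linear part to lie in $O(n)$) and must permute $\{0,\tilde P_1,\dots,\tilde P_{k-1}\}$ while respecting the weights. One subtlety is whether the distinguished point $0$ (the image of $P_1$) can be swapped with one of the $\tilde P_i$; since the choice of which $P_j$ to send to the origin was arbitrary, this is absorbed into viewing the configuration as an unordered $k$-tuple of weighted points, i.e. the full symmetry is really $S_k$ acting on $\{(0,1)\}\cup\{(\tilde P_i,a_i)\}$ after discarding the overall translation --- but translating so that the weighted centroid or the first point sits at $0$ reduces this back to $S_{k-1}$ on the remaining labels. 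I would make this bookkeeping precise, as it is exactly where an off-by-one in the dimension count could creep in, and then conclude that $d_{n,k}$ is as claimed in \eqref{eq:orbifold dim}.
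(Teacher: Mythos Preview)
Your overall strategy matches the paper's: use Theorem \ref{them:elementary} (equivalently Theorem \ref{lem:*} plus Proposition \ref{prop:sub}) to normalize the asymptotics to $\frac12|x|^2$, translate one singular point to the origin, and then count the remaining parameters modulo $O(n)\times S_{k-1}$. However, there is a genuine gap in your normalization.

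You only apply a \emph{unimodular} linear change $S$ (with $\det S=1$) together with a translation and subtraction of a linear function. Since, as you yourself note, the delta weights $|\pa u(P_i)|_{\mathcal{H}^n}$ are invariant under unimodular affine maps, after your normalization the equation reads
\[
\det\nabla^2 v \;=\; 1 + a_1\,\delta_0 + \sum_{i=2}^{k} a_i\,\delta_{\tilde P_i},
\]
with the \emph{original} weight $a_1$ at the origin, not $1$. So you have not actually reached the form \eqref{eq:c}. The paper fixes this with one more step that you omit: the dilation $\tilde u(x)=a_1^{-2/n}\,v(a_1^{1/n}x)$, which is an affine equivalence in the sense defined in the introduction (take $A=a_1^{1/n}I$, $\det A=a_1$), preserves the asymptotic \eqref{eq:d}, and rescales every delta weight by $a_1^{-1}$, sending the weight at $0$ to $1$ and the remaining weights to $a_i/a_1$.

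This omission is also why your claim that ``the normalization uses up the $GL(n)$ action except for the stabilizer $O(n)$'' is not quite right as stated. The subgroup of $GL(n)$ preserving the asymptotic form $\frac12|x|^2$ under affine equivalence is $\{\lambda O:\lambda>0,\ O\in O(n)\}$, i.e.\ $\R^+\times O(n)$, not just $O(n)$; the extra $\R^+$ factor is precisely the dilation you have not yet spent. Once you use it to force the weight at $0$ to equal $1$, the residual symmetry is exactly $O(n)$ (together with $S_{k-1}$ on the labels), and your dimension count in the second paragraph then goes through and agrees with the paper's computation of $d_{n,k}$. Your closing remarks about the bookkeeping between $S_{k-1}$ and $S_k$ are a reasonable concern, but note that once the weight at the origin is pinned to $1$ by the scaling, swapping $0$ with some $\tilde P_i$ forces a new rescaling and hence genuinely changes the normalized data; the paper simply records the quotient by $S_{k-1}$ on the remaining $k-1$ labeled pairs.
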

\begin{proof}[Proof of Corollary \ref{cor:multip point}] 
 By Theorem \ref{lem:*}, $u$ is affine equivalent to some $\bar{u}\in \mathscr{C}_k$ with asymptotical behavior \eqref{eq:prescrib behavior}. By Proposition \ref{prop:sub}, there exist positive numbers $\bar{a}_1,\cdots, \bar{a}_k$, and $\bar{P}_1,\cdots,\bar{P}_k$ in $\R^n$ such that
\[
\det \nabla^{2}\bar{u}=1+\sum_{i=1}^k \bar{a}_i\delta_{\bar{P}_i} \quad \mbox{in }\R^n.
\]
By some translation and subtracting a linear function, we may assume that $\bar P_k=0$. Let $\tilde{u}(x)=\bar a_k^{-2/n}\bar{u}(\bar a_k^{1/n} x)$. It satisfies
\[
\det\nabla^2 \tilde{u}=1+\delta_0+ \bar a_k^{-1} \sum_{i=1}^{k-1} \bar{a}_i\delta_{\tilde{P}_i}\quad \mbox{in }\R^n,
\]
and \eqref{eq:d}, which proves the first part of this corollary. 

Consequently, $\mathscr{C}_k'$ equals to the set of all solutions of \eqref{eq:c} and \eqref{eq:d} modulo the orthogonal group $O(n)$ and the symmetric group $S_{k-1}$. If we denote \[\mbox{conf}(m,n):=\{(P_1,\cdots,P_m)\in \R^{mn}: P_i\in\R^n\mbox{ and } P_i\neq P_j\mbox{ for } i,j=1,\cdots,m, i\neq j\}\] and $(\R^+)^{k-1}=\{(x_1,\cdots,x_{k-1})\in\R^{k-1}: x_l>0\mbox{ for all }l=1,\cdots, k-1\}$,
then $\mathscr{C}_k'$ can be identified as $\Big((\R^+)^{k-1}\times \big(\mbox{conf}(k-1,n)/O(n)\big)\Big)/S_{k-1}$, which is an orbifold of dimension $d_{n,k}$ given by
\[
d_{n,k}=
\begin{cases}
k-1+\frac{(k-1)k}{2},\quad\mbox{if}\quad k-1\le n,\\
k-1+(k-1)n-\frac{n(n-1)}{2},\quad\mbox{if}\quad k-1> n.
\end{cases}
\]
\end{proof}

\subsection{A strict convexity property}\label{sec:Regularity of local solutions}
We start with a lemma. For $x\in \R^n$, we write $x=(x',x_n)$ with $x'\in \R^{n-1}$.

\begin{lem}\label{lem:strict convex}
Let $\om\subset\R^n$ be a bounded and convex domain with the origin $0\in\pa\om$, $0<\lda<\infty$, $\varphi\in C(\pa\om)$ satisfying $\varphi\ge 0$ on $\pa\om$. Suppose $u$ is a nonnegative generalized solution of 
\[
\det\nabla^2 u\ge\lda \quad\mbox{in } \om
\]
 satisfying $u=\varphi$ on $\pa\om$. If $\varphi(x)\le c|x|^{1+\beta}$ near the origin for some $c>0$ and $\beta>1-\frac 2n$, then $u>0$ in $\om$. 
\end{lem}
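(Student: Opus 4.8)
The plan is to argue by contradiction: suppose $u(x_0) = 0$ for some $x_0 \in \om$. Since $u \geq 0$ and $u$ is convex, $x_0$ is a minimum point of $u$, so $0 \in \pa u(x_0)$. The key geometric idea is to exploit the lower bound $\det \nabla^2 u \geq \lambda$ via the Alexandrov maximum principle (or, equivalently, a barrier/volume estimate), which forces the subdifferential image $\pa u(\om)$ to be large; meanwhile the hypothesis $\varphi(x) \leq c|x|^{1+\beta}$ near $0$ with $\beta > 1 - \frac{2}{n}$ says that $u$ is very flat near the boundary point $0$, which should constrain $\pa u$ to be small there. Comparing the two will give the contradiction.

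More concretely, the first step is to reduce to the case $x_0 = 0$ is "seen" from the origin, i.e. to estimate how fast $u$ can grow away from $x_0$ in the direction of the flat boundary point. Since $u$ attains its minimum value $0$ at $x_0$ and $u = \varphi \leq c|x|^{1+\beta}$ on $\pa \om$ near $0$, along the segment from $x_0$ toward $0$ the convex function $u$ is squeezed between $0$ and a function that is $O(|x|^{1+\beta})$ near $\pa\om$; in particular $u$ is comparable to $|x - 0|^{1+\beta}$ up to constants on a small ball near $0$ intersected with $\om$, after sliding $x_0$ to be close to $0$ (one can translate coordinates so that the minimum is at a point at distance $\delta$ from $0$, and let $\delta \to 0$). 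The second step is the volume estimate: for the sub-level set $\{u < t\}$ (or for $u$ restricted to a small half-ball $B_r \cap \om$ with its boundary values controlled by $cr^{1+\beta}$), the Alexandrov estimate gives
\[
|\pa u(B_r \cap \om)|_{\mathcal{H}^n} \geq c_n \,\frac{\lambda \, |B_r \cap \om|^{n}}{(\text{osc of }u)^{n}} \gtrsim \lambda\, r^{n}\cdot r^{-n(1+\beta)} = \lambda\, r^{-n\beta}.
\]
But on the other hand, since $u$ is convex, $u(x_0)=0$ and $u \leq cr^{1+\beta}$ on the relevant piece of the boundary, every subgradient $p \in \pa u(B_r \cap \om)$ has $|p| \lesssim c r^{\beta}$ (a supporting plane at a point where $u \lesssim r^{1+\beta}$ over a set of diameter $\gtrsim r$, vanishing somewhere, has slope $\lesssim r^{\beta}$). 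Hence $|\pa u(B_r \cap \om)|_{\mathcal{H}^n} \lesssim (c r^{\beta})^n = c^n r^{n\beta}$. Combining, $\lambda \, r^{-n\beta} \lesssim c^n r^{n\beta}$, i.e. $r^{2n\beta} \gtrsim \lambda / c^n$, which fails as $r \to 0$ precisely because $\beta > 0$ — and in fact one needs the sharper exponent bookkeeping, where the threshold $\beta > 1 - \tfrac{2}{n}$ enters: the correct form of the Alexandrov estimate applied on $B_r \cap \om$ produces a power $r^{n - n(1+\beta)} = r^{-n\beta}$ on one side against $r^{n\beta}$ on the other only when the comparison is set up so that the half-ball geometry (the cone opening at $0 \in \pa\om$) contributes the right factor, and it is this geometry that demands $\beta > 1 - \tfrac 2n$ rather than merely $\beta > 0$.

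The main obstacle I expect is getting the exponents exactly right and handling the fact that $0$ is a \emph{boundary} point of $\om$: one cannot simply apply the interior Alexandrov estimate on a ball, but must work with $B_r \cap \om$ and control the contribution of $\pa(B_r\cap\om) \cap \om$ versus $\pa(B_r \cap \om) \cap \pa\om$. I would handle this by building an explicit convex lower barrier of the form $a(|x|^2/r^2 - 1)$ (scaled) on a carefully chosen region pinched between $x_0$ and the flat part of $\pa\om$, or equivalently by invoking a known localization lemma (of Caffarelli / Savin type) for sections of convex solutions with $C^{1,\beta}$ boundary data, to reduce to a normalized section and then running the volume comparison there. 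The threshold $\beta = 1 - \tfrac 2n$ should emerge as exactly the borderline where the normalized section can or cannot stay away from $0$; I would pin it down by tracking how the John ellipsoid of the section degenerates toward the boundary point. Once the contradiction is obtained for the translated minimum approaching $0$, the lemma follows since the same argument rules out $u$ vanishing anywhere in $\om$.
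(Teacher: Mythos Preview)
Your overall contradiction strategy is right, and your instinct to compare a lower bound forced by $\det\nabla^2 u \ge \lambda$ against an upper bound coming from $\varphi \le c|x|^{1+\beta}$ is exactly the paper's idea. But the execution has real gaps that prevent the argument from closing, and in particular you never actually recover the threshold $\beta > 1 - \tfrac{2}{n}$.

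First, the volume estimates in your second step are garbled. The Monge--Amp\`ere lower bound gives $|\pa u(E)|_{\mathcal H^n} \ge \lambda |E|$ for any Borel set $E$, not the formula you wrote; your displayed inequality $|\pa u(B_r\cap\om)| \gtrsim \lambda r^{-n\beta}$ cannot be right since the left side is finite while the right blows up as $r\to 0$. The upper bound $|p|\lesssim r^\beta$ for $p\in\pa u(B_r\cap\om)$ is also unjustified: the interior portion $\pa(B_r\cap\om)\cap\om$ of the boundary of your region carries values of $u$ that are \emph{not} controlled by $r^{1+\beta}$, so you cannot bound subgradients there. Your remark about ``sliding $x_0$ close to $0$'' is also not legitimate: $x_0$ is a fixed zero of $u$ and cannot be moved. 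As you yourself note, with an isotropic half-ball $B_r\cap\om$ the bookkeeping does not give the right threshold.

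The missing geometric idea is anisotropy. If $u(x_0)=0$ with $x_0\in\om$, then since $u\ge 0$ and $u(0)=0$, convexity forces $u\equiv 0$ on the entire segment $[0,x_0]$; after an affine change put this segment along the $x_n$-axis, say $x_0=2e_n$. Convexity together with the boundary bound $\varphi\le c|x|^{1+\beta}$ near $0$ then gives $u(x',x_n)\le C|x'|^{1+\beta}$ for $(x',x_n)$ in a fixed tube around the segment --- the \emph{transverse} distance $|x'|$, not $|x|$, is the relevant small parameter. Now use a barrier on an \emph{anisotropic} ellipsoid of width $r$ in $x'$ and fixed length in $x_n$, for instance $E_r=\{4|x'|^2/r^2+(x_n-3/4)^2\le 1/16\}$, which sits inside the cone from $0$ over $\{|x'|\le r,\ x_n=1\}$. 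The quadratic barrier $v$ with $\det\nabla^2 v=\lambda$ vanishing on $\pa E_r$ has depth $\sim r^{2-2/n}$, so comparison yields $\max_{E_r} u \gtrsim r^{2-2/n}$. Since $|x'|\le Cr$ on $E_r$, the upper bound gives $\max_{E_r} u \lesssim r^{1+\beta}$. These are incompatible as $r\to 0$ precisely when $1+\beta > 2-\tfrac{2}{n}$, i.e.\ $\beta > 1-\tfrac{2}{n}$. Your proposed isotropic barrier $a(|x|^2/r^2-1)$ cannot produce this exponent; it is the mismatch between one ``long'' direction along the zero segment and $n-1$ ``short'' transverse directions that generates the power $2-\tfrac{2}{n}$.
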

\begin{proof}
We argue by contradiction.  By some affine transformation, we may assume that for $e_n=(0',1)$, $2e_n\in\om$ and $u(2e_n)=0$. By the convexity of $u$ and that $u(0)=0$, $u(0', x_n)\equiv 0$ for $x_n\in (0,2)$. For all $|x'|\le \delta$ with sufficiently small $\delta$, it follows from the convexity of $u$ that
\[
u(x', 1)\le 0+ C_1 \varphi(z)\le C_1\cdot c |z|^{1+\beta} \le C_2 \cdot c|z'|^{1+\beta}\le C_3 \cdot c|x'|^{1+\beta},
\]
where $z$ is the intersection point of the ray $2e_n\to (x',1)$ and $\pa \om$, $C_1, C_2, C_3$ depend only on $\pa\om$, and we have used the fact $\pa\om$ is Lipschitz in the above inequalities. 
Using the convexity of $u$ again, we have for all $(x',x_n)$ with sufficiently small $|x'|$, $x_n\in (0,1)$,
\[
u(x', t)\le C |x'|^{1+\beta},
\]
where $C$ only depends on $c$ and $\pa\om$. Consider the cone generated by $\mathcal{C}_r:= B_r'(e_n)$ and $0$ for $r$ small, where $B_r'(e_n)=\{(x',1):|x'|\le r\}$. It is easy to see that the ellipsoid $E_r:=\{4|x'|^2/r^2+(x_n-3/4)^2\leq 1/16\}\subset \mathcal{C}_r$. Let
\[
v=\frac{\lambda^{\frac{1}{n}}r^{2-\frac{2}{n}}}{2\cdot 4^{\frac{n-1}{n}}}\,(4|x'|^2/r^2+(x_n-3/4)^2- 1/16).
\]
We see that $\det \nabla^2 u\ge\lambda =\det \nabla^2 v$. By comparison principle, we have
\[
-\max_{E_r}u\le u-\max_{E_r}u\le v\quad \mbox{in }E_r.
\]
At $x=3e_n/4$, we have
\[
\frac{\lambda^{\frac{1}{n}}r^{2-\frac{2}{n}}}{32\cdot 4^{\frac{n-1}{n}}}\le \max_{E_r}u\le Cr^{1+\beta}.
\]
Sending $r\to 0$, we will obtain a contradiction if $\beta>1-2/n$.
\end{proof}

\begin{cor}\label{ptws-convex}
Let $\om$ be a bounded and convex domain with $0\in\om$, $\om^+=\om\cap \R^n_+$, $\pa^+\om=\pa\om\cap\R^n_+$. Let $u$ be a convex generalized solution of
\[
\begin{split}
\begin{aligned}
\lambda\le \det& \nabla^2 u\le \Lambda&\quad&\mbox{in } \om,\\
u&=f & \quad&\mbox{on }\pa \om,
\end{aligned}
\end{split}
\]
where $0<\lambda\le\Lambda<\infty$, $f\in C(\pa \om)\cap C^{1,\beta}(\pa^+\om)$ with $\beta >1-\frac{2}{n}$. Then $u$ is strictly convex in $\om^+.$
\end{cor}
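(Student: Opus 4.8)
The plan is to argue by contradiction using the structure theorem for the non-strict-convexity set of a solution to a Monge-Amp\`ere equation with bounded right-hand side, and then to reduce to the one-sided barrier estimate of Lemma \ref{lem:strict convex}. Suppose $u$ fails to be strictly convex at some interior point $p\in\om^+$. By Caffarelli's theory (see \cite{Caffarelli, CL}), the set where a supporting hyperplane $\ell$ touches the graph of $u$ from below is a convex set $\Sigma$ with no extremal point in the interior of $\om$; hence $\Sigma$ is a convex subset of $\overline\om$ whose every extremal point lies on $\pa\om$, and $\Sigma$ has positive dimension. Replacing $u$ by $u-\ell$ (which changes neither the Monge-Amp\`ere measure nor the hypotheses, and only alters $f$ on $\pa\om$ by an affine function), we may assume $u\ge 0$, $u=0$ on $\Sigma$, and $\Sigma\cap\om^+\neq\emptyset$.

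The key geometric observation is that $\Sigma$, being a positive-dimensional convex set with extremal points only on $\pa\om$, must contain a boundary point $q\in\pa\om$; and since $\Sigma$ meets the open upper half-space, we can choose $q$ so that either $q\in\pa^+\om$, or $q$ lies on $\{x_n=0\}\cap\pa\om$ but the segment from $q$ into $\Sigma$ enters $\om^+$. I would treat the generic case $q\in\pa^+\om$ first: after an affine change of coordinates sending $q$ to the origin and the relevant segment of $\Sigma$ to a segment along the $x_n$-axis lying in $\{x_n\ge 0\}$, with another interior point of $\Sigma$ at $2e_n$, we are exactly in the situation of Lemma \ref{lem:strict convex}. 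Indeed $u\ge 0$, $\det\nabla^2 u\ge\lambda$ in $\om$, $u=0$ at the origin and at $2e_n$, so $u$ vanishes on the segment between them; and near the origin $f\in C^{1,\beta}(\pa^+\om)$ with $f(0)=0$ and $\nabla f(0)=0$ (the latter because $u\ge 0=u(0)$ forces the tangential derivative of $f$ at $0$ to vanish, $u$ touching its supporting plane $\{0\}$ there), whence $f(x)=o(|x|)$ and in fact $f(x)\le c|x|^{1+\beta}$ near $0$ on $\pa^+\om$. Lemma \ref{lem:strict convex} then yields $u>0$ in $\om$, contradicting $u=0$ on $\Sigma\cap\om\ni p$.

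The remaining point is to handle the case where the only available boundary point $q$ of $\Sigma$ sits on the equatorial set $\{x_n=0\}\cap\pa\om$, where no $C^{1,\beta}$ control on $f$ is assumed. Here I would use that $\Sigma$ also contains an interior point $p'\in\om^+$: slide along $\Sigma$ from $q$ toward $p'$ a small distance to reach a point $q'\in\Sigma$ with $q'\in\om^+$ strictly inside; then $q'$ has a whole neighborhood in $\om^+$, and running the barrier argument of Lemma \ref{lem:strict convex} centered at a nearby boundary point of $\om^+$ — or, more simply, observing that a convex function vanishing on a segment through an interior point together with $\det\nabla^2 u\ge\lambda>0$ is already impossible by the interior argument in \cite{Caffarelli} once the segment stays away from the part of $\pa\om$ where regularity is lost — gives the contradiction. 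I expect this last case to be the main obstacle: one must make sure the extremal structure of $\Sigma$ genuinely forces a boundary contact point that can be brought into $\pa^+\om$ (or into the interior), rather than hiding entirely on the unregularized equator; the cleanest route is probably to note that if $\Sigma$ met $\{x_n=0\}$ only, then by convexity $\Sigma\subset\{x_n\le 0\}$ or $\Sigma\subset\{x_n=0\}$, contradicting $\Sigma\cap\om^+\neq\emptyset$, so in fact $\Sigma$ must reach into $x_n>0$ and, its extremal points lying on $\pa\om$, must have an extremal point in $\pa^+\om$ — reducing everything to the first case.
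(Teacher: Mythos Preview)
Your proposal is correct and follows the same route as the paper: argue by contradiction, use Caffarelli's localization theorem to push the extremal points of the contact set $\Sigma$ to $\pa\om$, locate one such extremal point in $\pa^+\om$, and then invoke Lemma~\ref{lem:strict convex} there (after subtracting the supporting affine function, the boundary datum vanishes to order $1+\beta$ at that point since it is a local minimum and $f\in C^{1,\beta}$). Your closing Krein--Milman argument---if every extremal point of $\Sigma$ lay in $\{x_n\le 0\}$ then $\Sigma\subset\{x_n\le 0\}$, contradicting $p\in\Sigma\cap\om^+$---is exactly the right way to produce an extremal point in $\pa^+\om$, and it renders your intermediate discussion of the ``equatorial'' case unnecessary; you can delete that middle paragraph and go straight to this observation.
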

\begin{proof} 
Suppose $u$ is not strictly convex in $\om^+$ and let $y^1, y^2\in\om^+$ be such that the segment $\overline{PQ}$ is contained in the graph of $u$ with $P=(y^1,u(y^1))$ and $Q=(y^2,u(y^2))$. Let $\ell$ be a supporting hyperplane to $u$ at $(y^1+y^2)/2$ and let $E:=\{z\in\om^+:u(z)=\ell(z)\}$. It follows from Theorem 1 in \cite{Caffarelli90} that $E^*\subset\pa\om^+$, where $E^*$ is the set of extremal points of $E$. Since the line segment $\overline{y^1y^2}\subset E$, we have $\pa^+\om\cap E\neq \emptyset$. Let $z\in \pa^+\om\cap E$ and $v(x)=u(x+z)-\ell(x+z)$. It follows from Lemma \ref{lem:strict convex} that $v(y^1-z)>0$, which contradicts that $u(y^1)=\ell(y^1)$. 
\end{proof}
It is known that if $f\in C^{1,\beta}(\pa \om)$ for $\beta >1-\frac{2}{n}$ then $u$ is strictly convex in $\om$. Lemma \ref{lem:strict convex} and Corollary \ref{ptws-convex} assert that if $f$ is $C^{1,\beta}$ on a portion of the boundary $\pa\om$ with $\beta >1-\frac{2}{n}$, then $u$ is strictly convex in a corresponding portion of $\om$.

\begin{proof}[Proof of Theorem \ref{thm:strict convexity}]
It follows from Corollary \ref{ptws-convex} that $u$ is locally strictly convex in $\om\setminus\mathcal{C}(\Gamma)$. Now, let us discuss the case when $n=3$ or $4$. Suppose $u$ is not locally strictly convex in $\om\setminus\mathcal{L}(\Gamma)$ and let $y^1, y^2\in\om\setminus\mathcal{L}(\Gamma)$ be such that the segment $\overline{PQ}$ is contained in the graph of $u$ with $P=(y^1,u(y^1))$ and $Q=(y^2,u(y^2))$. Let $\ell$ be a supporting hyperplane to $u$ at $(y^1+y^2)/2$ and let $E:=\{z\in\om\setminus\mathcal{L}(\Gamma):u(z)=\ell(z)\}$. First of all, $u$ is convex in $\om$ by Corollary \ref{cor:Yconvex}. Secondly, it follows from the Theorem in \cite{Caffarelli93} that $u(x)\neq \ell(x)$ for those $x\in\om$ not on the line $L$ containing $\overline{y^1y^2}$. Hence, $E^*\subset L$. Since $\overline{y^1y^2}\subset\om\setminus\mathcal{L}(\Gamma)$, there exists $y^3\in\pa\om$ such that $y^1,y^2,y^3$ lie on the same line $L$ and $\overline{y^2y^3}\subset\om\setminus\mathcal{L}(\Gamma)$. It follows from Theorem 1 in \cite{Caffarelli90} that $y^3\in E^*$. This contradicts with Corollary \ref{ptws-convex}.
\end{proof}

\begin{proof}[Proof of Theorem \ref{them:elementary1}]
The first part of Theorem \ref{them:elementary1} follows from Corollary \ref{cor:multip point}. We now prove the regularity part. First, we know from \cite{CL} that for a solution $u$ of \eqref{eq:many singularity1}, it is smooth outside of a large ball. Then, it follows from Theorem \ref{thm:strict convexity} that $u$ is locally strictly convex, and thus, smooth away from the set of line segments each of which connects two singular points.
\end{proof}




\section{Line singularity}\label{sec:line and Y}

\subsection{Existence of global solutions with measure data}\label{sec:measure data}

In this section, we shall prove Theorem \ref{existenceY}.

\begin{thm}\label{existence1}
Let $f\in L^1_{loc}(\R^n)$, $f\ge 0$ in $\R^n$, and the support of $(f-1)$ be bounded. Then for every $c\in \R, b\in\R^n, A\in\mathcal{A}$, there exists a unique convex Alexandrov solution of
\be\label{eq:with f}
\det \nabla^2 u=f\quad\mbox{in }\R^n
\ee
satisfying
\[\lim_{|x|\to +\infty}|E(x)|=0,\]
where $E(x)=u(x)-(\frac 12 x^TAx+b\cdot x+c)$.
\end{thm}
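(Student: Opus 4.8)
The plan is to construct the solution by exhaustion on balls $B_R$ and pass to the limit, using the prescribed quadratic behavior at infinity as a barrier. After an affine change of variables $x\mapsto A^{1/2}x$ (which preserves $\det\nabla^2 u=f$ up to replacing $f$ by a function still equal to $1$ outside a bounded set, since $\det A^{1/2}=1$) and subtracting the affine function $b\cdot x+c$, it suffices to treat the case $A=I$, $b=0$, $c=0$, i.e.\ to find a unique convex Alexandrov solution with $u(x)-\tfrac12|x|^2\to 0$ as $|x|\to\infty$. Fix $R_0$ large enough that $\mathrm{supp}(f-1)\subset B_{R_0}$ and that $f\in L^1(B_{R_0})$; set $m:=\int_{B_{R_0}}f<\infty$.

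First I would solve, for each $R>R_0$, the Dirichlet problem
\[
\det\nabla^2 u_R=f\ \text{ in }B_R,\qquad u_R=\tfrac12 R^2\ \text{ on }\pa B_R,
\]
which has a unique convex Alexandrov solution by the classical theory (Aleksandrov; see \cite{G}), since $f\in L^1(B_R)$. Next I would produce $R$-independent barriers. For the upper barrier, $\tfrac12|x|^2$ is a subsolution of $\det\nabla^2 v\le f$ only where $f\ge 1$, so instead I compare on $B_R\setminus B_{R_0}$: there $\det\nabla^2 u_R=1$, and I take the explicit radial solution from Theorem \ref{thm a}/Proposition \ref{prop:to the main result}, namely a function of the form $w_c(x)=\int_0^{|x|}(\tau^n+c)^{1/n}\,\ud\tau+\text{const}$ with $c$ chosen (depending only on $m$ and $R_0$, not on $R$) so that $w_c\ge \tfrac12 R^2$ on $\pa B_{R_0}$-data is dominated; comparison on $B_R\setminus B_{R_0}$ together with the boundary values gives $u_R\le w_c$ there, hence $u_R\le \tfrac12|x|^2+O(|x|^{2-n})$ uniformly in $R$. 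For the lower barrier, $\tfrac12|x|^2$ itself is a supersolution where $f\le 1$ but not where $f>1$; here I compare on all of $B_R$ using a fixed convex function $\underline u$ with $\det\nabla^2\underline u\ge f$ globally and $\underline u\le\tfrac12 R^2$ on $\pa B_R$ — concretely $\underline u=\tfrac12|x|^2+v_0$ where $v_0$ is a fixed compactly-supported-gradient convex correction absorbing the mass of $(f-1)^+$ on $B_{R_0}$, exactly as in step (ii) of the proof of Proposition \ref{prop:to the main result}. Thus $\underline u\le u_R\le \tfrac12|x|^2+C|x|^{2-n}$ on $B_R$, with $\underline u(x)-\tfrac12|x|^2\to 0$.

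With these uniform two-sided bounds and local uniform Lipschitz bounds from convexity, I extract a subsequence $u_R\to u$ locally uniformly on $\R^n$; by the weak continuity of the Monge–Amp\`ere measure under local uniform convergence of convex functions, $\det\nabla^2 u=f$ in $\R^n$, and the squeeze between $\underline u$ and $\tfrac12|x|^2+C|x|^{2-n}$ forces $E(x)=u(x)-\tfrac12|x|^2\to 0$. Finally, uniqueness: if $u_1,u_2$ are two such solutions then $w=u_1-u_2\to 0$ at infinity and, since both have the same Monge–Amp\`ere measure $f$ and are smooth (hence the equation is uniformly elliptic along the segment joining them) on $\R^n\setminus B_{R_0}$ where $f\equiv 1$, $w$ satisfies a linear elliptic equation $a_{ij}\partial_{ij}w=0$ there with $w\to 0$ at infinity, while on $B_{R_1}$ for any $R_1>R_0$ the comparison principle for Alexandrov solutions (Theorem 1.4.6 in \cite{G}) applied on $B_{R_1}$ with boundary data controlled by $\sup_{\pa B_{R_1}}|w|$ gives $\sup_{B_{R_1}}|w|\le \sup_{\pa B_{R_1}}|w|\to 0$; letting $R_1\to\infty$ yields $w\equiv 0$. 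Theorem \ref{existenceY} then follows as the special case $\ud\mu=f\,\ud x$, and the general locally finite measure $\mu$ with $\mathrm{supp}(\mu-1)$ bounded is handled identically (the only input used about $f$ on $B_{R_0}$ was its finite total mass there, which for $\mu$ is $\mu(B_{R_0})<\infty$).

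The main obstacle I anticipate is making the barrier argument genuinely $R$-independent near infinity: one must choose the constant $c$ in the radial comparison function $w_c$ using only the total mass $m=\mu(B_{R_0})$ (via the Aleksandrov estimate / the identity $|\pa w_c(0)|_{\mathcal H^n}=c\,\omega_n$ tying $c$ to the point mass it replaces), and then verify that $w_c$ dominates $u_R$ on the annulus for every $R$ — this is where the precise form of the exterior asymptotics from Theorem \ref{lem:*} and the comparison principle on annuli do the real work. The decay rate $O(|x|^{2-n})$ of $E$ and its derivatives, claimed in the remark following the theorem, then comes by quoting \cite{CL} once $u$ is known to be an exterior solution with the stated leading behavior.
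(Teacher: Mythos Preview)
Your overall strategy---solve Dirichlet problems on $B_R$, trap $u_R$ between $R$-independent barriers, pass to the limit, and invoke comparison for uniqueness---is exactly the paper's. The uniqueness argument via comparison on large balls is fine. The gaps are in the barrier constructions and in what the barriers actually yield.

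\textbf{Upper barrier.} Your comparison on the annulus $B_R\setminus B_{R_0}$ requires knowing $u_R$ on $\partial B_{R_0}$ \emph{from above}, independently of $R$, and you never establish this; the obstacle you flag at the end is real and unaddressed. Note also that $w_c(r)=\int_0^r(\tau^n+c)^{1/n}\,\ud\tau$ satisfies $w_c(r)-\tfrac12 r^2\to C_c>0$ (a nonzero constant), so $w_c$ shifted to have asymptotic $\tfrac12|x|^2+O(|x|^{2-n})$ lies \emph{below} $\tfrac12|x|^2$ near infinity and hence below $u_R$ on $\partial B_R$---the wrong inequality. The paper avoids all this by taking a \emph{global} supersolution on $B_R$: set $\bar u\equiv 0$ on $B_1$ and $\bar u=\int_1^{|x|}(\tau^n-1)^{1/n}\,\ud\tau$ outside; then $\det\nabla^2\bar u=0\le f$ in $B_1$ and $=1=f$ outside, so after adding the constant $\beta_+=\sup(\tfrac12|x|^2-\bar u)$ one has $\bar u+\beta_+\ge u_R$ on all of $B_R$ by a single comparison.

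\textbf{Lower barrier.} A ``convex $v_0$ with compactly supported gradient'' is linear outside a compact set, so $\tfrac12|x|^2+v_0$ differs from $\tfrac12|x|^2$ by an affine function at infinity, not by something tending to zero; and the reference to step (ii) of Proposition~\ref{prop:to the main result} is not transferable, since there the subsolution is an average of explicit radial solutions with point masses, which has no analogue for a general $f\in L^1_{loc}$. The paper's construction is genuinely more delicate: solve $\det\nabla^2 v_1=f+a\eta$ in $B_1$ with zero boundary data, use Alexandrov's maximum principle (this is where only $\int_{B_1}f$ enters) to bound $v_1$ below, and glue to a radial piece $\int_1^r(\tau^n+K_2)^{1/n}\,\ud\tau$ with $K_2$ chosen so that the radial derivative jumps \emph{up} across $\partial B_1$; a sliding argument (as in Lemma~4.1 of \cite{CL}) then forces any touching point onto $\partial B_1$, where the corner gives a contradiction.

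\textbf{What the barriers give.} Even the paper's barriers only yield $|u(x)-\tfrac12|x|^2|\le C$, not decay to zero: both $\bar u+\beta_+$ and $\underline u+\beta_-$ differ from $\tfrac12|x|^2$ by a (generally nonzero) constant plus $O(|x|^{2-n})$ at infinity. The squeeze you claim does not force $E\to 0$. The paper closes this by applying Theorem~\ref{lem:*} to the limit $u$ (an exterior solution of $\det\nabla^2 u=1$) to get $u-\tfrac12|x|^2\to\tilde c$ for some constant $\tilde c$, and then takes $u-\tilde c$ as the solution with the prescribed $c$. You should use Theorem~\ref{lem:*} for this step, not only for the decay rate of derivatives.
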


\begin{proof}The uniqueness part follows from the comparison principle, and we will show the existence part.

The proof is similar to that of Theorem 1.7 in \cite{CL}. The difference is that we need to find a proper subsolution so that the estimates depend only on the $L^1$ norm of $f$ instead of the lower bound and $L^{\infty}$ norm of $f$. 

By affine invariance of the equation, we may assume $A=Id, b=0, c=0$. We may also assume $(f-1)$ is supported in $B_{1/2}$. For $R>0$, let $u_R$ be the unique convex Alexandrov solution of
\[
\begin{cases}
\begin{aligned}
\det\nabla^2 u_R&=f&\quad&\mbox{in }B_R,\\
u_R&=R^2/2&\quad&\mbox{on }\pa B_R.
\end{aligned}
\end{cases}
\]
We will show that along a sequence $R\to +\infty$, $u_R$ converges to a solution $u$ of \eqref{eq:with f} satisfying
\be\label{eq:asymp}
\sup_{\R^n}\left|u(x)-\frac{|x|^2}{2}\right|\le C,
\ee
where $C$ depends only on $n$ and $\int_{B_1}f\ud x$. In the following, we may assume that $f$ is smooth as long as the constants in our estimates depends only on $n$ and $\int_{B_1}f\ud x$, since otherwise we can use mollifiers to smooth $f$ and take the limit in the end. We may also suppose $f$ is positive in $B_{1/2}$, since otherwise we replace $f$ by $f+\va\chi$ with a smooth cut-off function $\chi$ which is supported in $B_1$ and equals to $1$ in $B_{1/2}$ and send $\va\to 0$ in the end.

Let $\eta$ be a nonnegative smooth function supported in $B_{1/4}$ satisfying $\int_{B_1}\eta\ud x=1$, and $v_1$ be the smooth solution of
\[
\begin{cases}
\begin{aligned}
\det\nabla^2 v_1&=f+a\eta&\quad&\mbox{in }B_1,\\
v_1&=0&\quad&\mbox{on }\pa B_1,
\end{aligned}
\end{cases}
\]
where $a>0$ will be chosen later. It follows from Alexandrov's maximum principle (see \cite{G}) that
\[
v_1\ge -c(n)\left(\int_{B_1}f(x)\ud x+a\right)^{\frac 1n}=:-c_0\quad\mbox{in }B_{1/2}.
\]
Let $r=|x|, K_1=\frac{4c_0}{3}, K_2=(2K_1)^n, v_2=K_1(r^2-1)$ and
\[
\underline{u}(x)=
\begin{cases}
\int_1^{r}(\tau^n+K_2)^{\frac 1n}\ud \tau,&\quad r\ge 1,\\
v_1, &\quad 0\le r<1.
\end{cases}
\]
First of all, $v_1\ge v_2$ in $\overline{B_{1/2}}$. Secondly, we can choose $a$ large such that $\det\nabla^2 v_2=K_2\ge 1$. Hence, $\det\nabla^2 v_1\le \det\nabla^2 v_2$ in $B_1\setminus \overline{B_{1/2}}$, and it follows from comparison principle that $v_1\ge v_2$ in $B_1\setminus \overline{B_{1/2}}$. So $v_1\ge v_2$ in $\overline{B_{1}}$. Then $\underline u\in C^0(\R^n)\cap C^{\infty}(\overline B_1)\cap C^{\infty}(\overline{\R^n\setminus B_1})$, $\underline u$ is locally convex in $\R^n\setminus B_1$,
\[
\begin{split}
\det\nabla^2 \underline u&=1\quad\mbox{on }\R^n\setminus\overline B_1,\\
\det\nabla^2 \underline u&\ge f\quad\mbox{on } B_1.
\end{split}
\]
Moreover, we have 
\be\label{eq:normal convex}
\underline u\ge v_2\ \ \mbox{in}\ \ B_1, \ \underline u=v_2\ \ \mbox{on}\ \ \pa B_1, \mbox{ and }\lim_{r\to 1^-}|\pa_r  v_2|<\lim_{r\to 1^+}|\pa_r \underline u|.
\ee
Also, since $n\ge 3$, we have
\[
\sup_{\R^n}\left|\underline u(x)-\frac{|x|^2}{2}\right|<+\infty.
\]
Define
\[
\bar{u}(x)=
\begin{cases}
\int_1^{|x|}(\tau^n-1)^{1/n}\ud\tau,&\quad |x|>1,\\
0,&\quad |x|\le 1.
\end{cases}
\]
It follows that
\[
\sup_{\R^n}\left|\bar u(x)-\frac{|x|^2}{2}\right|<+\infty.
\]
Hence
\[
\beta_+:=\sup_{\R^n}\left(\frac{|x|^2}{2}- \bar u(x)\right)<+\infty\quad\mbox{ and }\quad
\beta_-:=\inf_{\R^n}\left(\frac{|x|^2}{2}-\underline u(x)\right)>-\infty.
\]
As Lemma 4.1 in \cite{CL}, we shall show that
\be\label{eq:upperlowerbound}
\underline u(x)+\beta_-\le u_R(x)\le \bar u(x)+\beta_+\quad\forall x\in B_R.
\ee
Indeed, the second inequality of \eqref{eq:upperlowerbound} follows from Lemma 4.1 in \cite{CL}, since our choice of $\bar u$ is the same as the one in \cite{CL}. The first inequality of \eqref{eq:upperlowerbound} follows from the proof of Lemma 4.1 in \cite{CL}, and we include it here for completeness. It is clear that for $\beta$ very negative, we have
\[
\underline u(x)+\beta\le u_R(x)\quad\forall x\in B_R.
\]
Let $\bar \beta$ be the largest number for which the above inequality holds with $\beta=\bar\beta$.
If $\bar\beta\ge\beta_-$, we are done. Otherwise, $\bar\beta<\beta_-$, and for some $\bar x\in \overline B_R$,
\[
\underline u(\bar x)+\bar \beta\le u_R(\bar x).
\]
In view of the boundary data of $u_R$ and the definition of $\beta_-$, we have $|\bar x|<R$. Since
\[
\det \nabla^2\underline u\ge \det\nabla^2 u_R\quad\mbox{in }B_R\setminus\overline B_1
\]
and
\[
\det \nabla^2\underline u\ge \det\nabla^2 u_R\quad\mbox{in } B_1,
\]
we have, by the maximum principle, $|\bar x|=1$. But this is impossible in view of \eqref{eq:normal convex} and the smoothness of $u_R$. 
Hence, the first inequality of \eqref{eq:upperlowerbound} holds. 

Consequently, it follows from the convexity of $u_R$ that $|\nabla u_R|$ is uniformly bounded on every compact subset of $B_{R-1}$. Thus, along a sequence $R_i\to+\infty$,
\[
u_{R_i}\to u\quad \mbox{in } C^{0}_{loc}(\R^n)
\]
for some convex function $u$. Hence $u$ is an Alexandrov solution of \eqref{eq:with f} and satisfies \eqref{eq:asymp}. Finally, it follows from Theorem \ref{lem:*} (i) and \eqref{eq:asymp} that there exists $\tilde c\in \R$ such that 
\[
\lim_{|x|\to\infty}\left|u-\frac{|x|^2}{2}-\tilde c\right|=0.
\]
Thus, $u-\tilde c$ is the desired solution.
\end{proof}

The proof of Theorem \ref{existenceY} follows from a standard approximation method.
\begin{proof}[Proof of Theorem \ref{existenceY}]
Suppose $\mu-1$ is supported in $B_r$. Let $\{f_i\}$ be nonnegative $L^1_{loc}(\R^n)$ functions with supp$(f_i-1)\subset B_{r+1}$ such that
\[
f_i \rightharpoonup \mu
\]
weakly in $B_{r+2}$ and $\int_{B_{r+2}}f_i(x)\ud x\le C$ for some $C$ depending only on $n$ and $\mu$. Let $u_i$ be the solution of \eqref{eq:with f} with $f_i$ instead of $f$ as in Theorem \ref{existence1}. From the above we know that
$|u_i(x)-(\frac 12 x^TAx+b\cdot x+c)|\leq C$ for some $C$ depending only on $n$ and $\mu$. Hence $|u_i|+|\nabla u_i|$ are locally uniformly bounded. Passing to a subsequence (still denoted as $\{u_i\}$), $u_i\to u$ in $C_{loc}^0(\R^n)$ for some convex function $u$, which is an Alexandrov solution of \eqref{eq:1+mu} . As in the end of our proof of Theorem \ref{existence1}, there exists $\tilde c\in\R$ such that $u-\tilde c$ is a desired solution. Finally, the uniqueness part follows from the comparison principle.
\end{proof}

\begin{rem}
This method also provides another proof of Proposition \ref{prop:to the main result}.
\end{rem}

\subsection{Regularity and asymptotical behaviors of solutions near the singularity}\label{asymp-line}

In this section we analyze the behaviors of solutions near the isolated singularity and the line singularity. We will show that $|\nabla^2 u(x)|=O(1/dist (x, \Gamma))$ for $x$ away from the singular set $\Gamma$. This is the best we can have, since the solution in Theorem \ref{thm a} is indeed of this rate. Our proof makes use of Pogorelov estimates in a portion of the domain, which has been used before in \cite{TW3,Savin2} for boundary regularity of solutions of Monge-Amp\`ere equations.

\begin{thm}\label{thm:esti-line}
Let $\om\subset\R^n$ be a bounded smooth convex domain with $n\ge 2$ and $\Gamma\subset\subset\om$ be either a point or a straight line segment. 
Let $f\in C^{1,1}(\overline\om)$, $f>0$ in $\overline\om$, $u$ be a convex function in $\overline\om$ and $u\in C^2(\overline\om\setminus \Gamma)\cap C^4(\om\setminus \Gamma)$. If
\[
\det \nabla^2 u=f\quad \mbox{in }\om\setminus \Gamma,
\]
then for $x\in \om\setminus\Gamma$, we have
\be\label{eq:esti-line}
|\nabla^2 u(x)|\leq \frac{C}{dist(x,\Gamma)},
\ee
where $C>0$ depends only on $n$, $diam(\om)$, $\|\nabla \log f\|_{L^{\infty}(\om)}$, $\|\nabla^2 \log f\|_{L^{\infty}(\om)}$, $\|\nabla u\|_{L^{\infty}(\om)}$ and $\|\nabla^2 u\|_{L^{\infty}(\pa\om)}$, but is independent of $x$.
\end{thm}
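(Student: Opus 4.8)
The plan is to prove the interior gradient-type bound on the Hessian by a localized Pogorelov estimate, applied to a region that excludes a neighborhood of the singular set $\Gamma$ and whose boundary partly lies in the good set where we already control $u$. First I would fix a point $x_0 \in \om \setminus \Gamma$ and set $d = \mathrm{dist}(x_0, \Gamma) > 0$; after an affine normalization we may assume $d$ is comparable to $1$ and reduce to proving $|\nabla^2 u(x_0)| \le C$ with $C$ depending only on the stated quantities. The key geometric observation is that, since $\Gamma$ is a point or a segment, the set $\{x \in \om : \mathrm{dist}(x,\Gamma) > d/2\}$ is an open set on which $u$ is $C^4$ and solves $\det\nabla^2 u = f$ classically, and it is separated from $\Gamma$ by the ``slab'' or ``tube'' $\{\mathrm{dist}(x,\Gamma) \le d/2\}$. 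I would choose a sub-level set $\Omega_t = \{x : u(x) < \ell(x) + t\}$ of $u$ minus a supporting affine function $\ell$ at $x_0$, with $t$ small enough that $\Omega_t \subset\subset \om \setminus \Gamma$ (possible because $u$ is strictly convex there — in fact strictly convex away from $\mathcal C(\Gamma)$ by Theorem \ref{thm:strict convexity}, so the section is compactly contained once $t$ is small), OR alternatively use a section that reaches up to $\pa\om$, where $\|\nabla^2 u\|$ is assumed bounded.

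The heart of the argument is the Pogorelov computation: on the section $\Omega_t$ consider the auxiliary function
\[
w(x) = (\ell(x) + t - u(x))^{\gamma}\, \exp\!\left(\tfrac{\mu}{2}|\nabla u(x)|^2\right)\, \lambda_{\max}(\nabla^2 u(x)),
\]
(or the standard variant with $u_{\xi\xi}$ for a fixed direction $\xi$), where $\gamma, \mu$ are suitable constants. Since $w$ vanishes on $\pa\Omega_t$ (where $u = \ell + t$) and $u \in C^4$ in the interior, $w$ attains an interior maximum, and differentiating the equation $\log\det\nabla^2 u = \log f$ twice and using concavity of $\log\det$ on positive matrices yields, at the maximum point, a bound on $w$ in terms of $n$, $\|\nabla\log f\|_\infty$, $\|\nabla^2\log f\|_\infty$, $\mathrm{diam}(\om)$ and $\|\nabla u\|_\infty$. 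This is the classical interior Pogorelov estimate carried out verbatim inside a section rather than the whole domain; the only novelty is bookkeeping of which constants enter. From the bound on $w$ at its max point we get the bound at $x_0$: we need a lower bound on $\ell(x_0) + t - u(x_0) = t$ and we need $t$ itself controlled below, which is where the geometry of $\Gamma$ and the strict convexity (Theorem \ref{thm:strict convexity}) re-enter — the section of height $t$ at $x_0$ must be shown to be compactly contained in $\om\setminus\Gamma$ for a definite $t \gtrsim d^{2}$ (up to the normalization), using that $u$ is strictly convex and $C^2$ up to $\pa\om$ away from $\mathcal C(\Gamma)$ with Hessian bounded on $\pa\om$.

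Then I would translate the normalized bound back: the affine map used to normalize $d \sim 1$ rescales the Hessian by a factor $\sim d$, which produces exactly the claimed $C/\mathrm{dist}(x_0,\Gamma)$. For the case $\Gamma$ a point this is simpler (balls work as comparison domains); for $\Gamma$ a segment one works in cross-sectional hyperplanes as in Proposition \ref{prop:lineconvex}, or more directly uses that the tube of radius $d/2$ around the segment is convex and disjoint from the section. I expect the main obstacle to be precisely the control of the section height $t$ from below independently of $x_0$ near $\Gamma$: one must show sections of $u - \ell_{x_0}$ do not shrink faster than $\mathrm{dist}(x_0,\Gamma)^2$ as $x_0 \to \Gamma$, which requires combining the strict convexity away from $\mathcal C(\Gamma)$ with a uniform modulus coming from the boundary data $\|\nabla^2 u\|_{L^\infty(\pa\om)}$ and $\mathrm{diam}(\om)$; a clean way is to sandwich $u$ between a fixed smooth strictly convex barrier agreeing with $\|\nabla^2 u\|_{\pa\om}$ on $\pa\om$ and the cone/tangent-cone lower barrier near $\Gamma$, and extract the section size from the barrier. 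The Pogorelov computation itself, while lengthy, is standard and I would cite the analogous computations in \cite{TW3, Savin2} rather than reproduce every term.
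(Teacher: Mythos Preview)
Your approach differs substantially from the paper's, and the gap you yourself flag is a real one that your sketch does not close.

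The paper does \emph{not} use sections of $u$ at all. Instead it exploits the fact that a point or segment lies in a hyperplane: after putting $\Gamma$ on the $x_1$-axis, it works on the half-space region $\om_{n,\va}=\{x_n\ge\va\}$ and runs the Pogorelov argument with the \emph{linear} weight $\rho=x_n-\va$ in the test function $U=(x_n-\va)\,u_{ii}\,e^{\frac12 u_i^2}$. Because $\rho$ is linear, $\rho_{ij}\equiv0$, and after the standard shear-plus-rotation diagonalizing $\nabla^2 u$ at the interior max (fixing the direction being differentiated) one has $\rho_1=0$; the cross terms $\rho_i^2/(\rho^2 v_{ii})$ for $i\ge2$ cancel exactly against the corresponding terms coming from the first-order critical-point identities, leaving $v_{11}\le C$ with $C$ depending only on $\|\nabla\log f\|_\infty$, $\|\nabla^2\log f\|_\infty$, $\|\nabla u\|_\infty$. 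If the max falls on $\pa\om_{n,\va}$ one uses $\|\nabla^2 u\|_{L^\infty(\pa\om)}$ directly. Sending $\va\to0$ gives $x_n\,u_{ii}\le C$, hence $|\nabla^2 u|\le C/x_n$, and repeating for the other half-spaces yields $C/\mathrm{dist}(x,\Gamma)$. No strict convexity, no section geometry, no scaling is needed.

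Your proposed route via sections $\{u<\ell+t\}$ runs into exactly the obstacle you name, and your suggested fix does not work with the stated dependence of $C$. To trap a section of height $t\gtrsim d^2$ inside $\{\mathrm{dist}(\cdot,\Gamma)>d/2\}$ you need a \emph{uniform} lower bound $u-\ell_{x_0}\ge c|x-x_0|^2$ near $x_0$, i.e.\ a uniform lower Hessian bound, which is precisely what is not available a priori and is not implied by the listed quantities; Theorem~\ref{thm:strict convexity} gives only qualitative strict convexity, no modulus. Your affine normalization is also problematic: rescaling $x\mapsto x/d$ changes $\|\nabla u\|_\infty$, $\|\nabla\log f\|_\infty$, $\|\nabla^2\log f\|_\infty$ by powers of $d$, so the ``constant'' you obtain after un-scaling would depend on $d$. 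The linear-weight trick is the missing idea; once you use $\rho=x_n-\va$ instead of $(\ell+t-u)^\gamma$, all of these difficulties disappear and the $1/\mathrm{dist}$ rate falls out directly from the weight itself.
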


\begin{proof} By some translation and rotation, we may suppose that $\Gamma$ lies on the $x_1$-axis. We shall use Pogorelov type estimates.
For $\va>0$, let \[\om_{n,\va}=\{x\in \overline{\om}:x_n\ge \va\}.\]
We first show that \[(x_n-\va)u_{ii}(x)\le C\] for $x\in\om_{n,\va}$, where $i=1,\cdots, n-1$ and $C$ only depends on $n$, $diam(\om)$, $\|\nabla \log f\|_{L^{\infty}(\om)}$, $\|\nabla^2 \log f\|_{L^{\infty}(\om)}$, $\|\nabla u\|_{L^{\infty}(\om)}$ and $\|\nabla^2 u\|_{L^{\infty}(\pa\om)}$. Let
\[
U(x)=(x_n-\va)u_{11}e^{\frac 12 u_1^2}.
\]
If $U$ attains its maximum on $\pa\om_{n,\va}$, we are done. Suppose $U$ attains its maximum at an interior point $x_0$ in $\om_{n,\va}$. By the linear transformation:
\[
\begin{split}
y_i&=x_i,\quad i=2,\cdots, n,\\
y_1&=x_1-\sum_{i=2}^n\frac{u_{1i}(x_0)}{u_{ii}(x_0)}x_i,
\end{split}
\]
which leaves $U$, the equation and $\|\pa_1 f\|_{L^{\infty}}, \|\pa_{11}f\|_{L^{\infty}}$ unchanged (note that later we will only differentiate the equation with respect to $x_1$ twice), we may assume that $u_{1i}(x_0)=0$ for $i=2,\cdots, n$. Let $O$ be an orthogonal rotation which fixes $x_1$ such that $O^t\nabla^2 u(x_0)O$ is diagonal. Let $v(x)=u(Ox)$ and
\[
V(x)=\rho(x) v_{11}e^{\frac 12 v_1^2},
\]
where $\rho(x)=e_n^TOx-\va$ with $e_n=(0,\cdots,0,1)$. Then $V$ achieves its maximum at $\bar x_0=O^tx_0$ in $O^t(\om_{n,\va})$ and $\nabla^2v(\bar x_0)$ is diagonal. Thus, we have, at $\bar x_0$,
\[
\begin{split}
\frac{\rho_i}{\rho}+\frac{v_{11i}}{v_{11}}+v_1v_{1i}&=0,\\
-\frac{\rho_i^2}{\rho^2 }+\frac{v_{11ii}v_{11}-v^2_{11i}}{v^2_{11}}+v_{1i}^2+v_1v_{1ii}&\le 0,
\end{split}
\]
where we have used that $\rho$ is a linear function. Let $L$ be the linear operator at $x_0$,
\[
L:=\sum_{i=1}^n\frac{\pa_{ii}}{v_{ii}}.
\]
Since $\det\nabla^2 v=f(Ox)$, we have
\[
L(v_1)=\pa_1 \log f\quad\mbox{and}\quad L(v_{11})=\sum_{k,l=1}^n\frac{v_{1kl}^2}{v_{kk}v_{ll}}+\pa_{11}\log f.
\]
Consequently, at $\bar x_0$, we have
\[
\begin{split}
0&\ge\sum_{i=1}^n -\frac{\rho_i^2}{\rho^2 v_{ii}}+\frac{v_{11ii}v_{11}-v^2_{11i}}{v^2_{11}v_{ii}}+\frac{v_{1i}^2}{v_{ii}}+\frac{v_1v_{1ii}}{v_{ii}}\\
&=\sum_{i=1}^n -\frac{\rho_i^2}{\rho^2 v_{ii}} +\sum_{k,l=1}^n\frac{v_{1kl}^2}{v_{11}v_{kk}v_{ll}}+\frac{\pa_{11}\log f}{v_{11}}-\sum_{i=1}^n\frac{v^2_{11i}}{v^2_{11}v_{ii}}+v_{11}+v_1\pa_1\log f\\
&\ge \sum_{i=1}^n -\frac{\rho_i^2}{\rho^2 v_{ii}} +\sum_{i=2}^n\frac{v^2_{11i}}{v^2_{11}v_{ii}}+v_{11}+\frac{\pa_{11}\log f}{v_{11}}+v_1\pa_1\log f\\
&\ge \sum_{i=1}^n -\frac{\rho_i^2}{\rho^2 v_{ii}}+\sum_{i=2}^n \frac{1}{v_{ii}}\left(\frac{\rho_i}{\rho}\right)^2 +v_{11}+\frac{\pa_{11}\log f}{v_{11}}+v_1\pa_1\log f\\
&=v_{11}+\frac{\pa_{11}\log f}{v_{11}}+v_1\pa_1\log f,
\end{split}
\]
where we used that $\rho_1=0$. Hence $v_{11}\le C$, and thus, $(x_n-\va)u_{11}\le C$ in $\om_{n,\va}$, where $C$ depends only on $n$, $diam(\om)$, $\|\nabla \log f\|_{L^{\infty}(\om)}$, $\|\nabla^2 \log f\|_{L^{\infty}(\om)}$, $\|\nabla u\|_{L^{\infty}(\om)}$ and $\|\nabla^2 u\|_{L^{\infty}(\pa\om)}$. Similarly, we can show that $(x_n-\va)u_{ii}\le C$ in $\om_{n,\va}$ for $i=1,\cdots, n-1$.

Next, we shall show that \[(x_n-\va)u_{nn}\le C\] in $\om_{n,\va}$. Let
\[
W(x)=(x_n-\va)u_{nn}e^{\frac 12 u_n^2}.
\]
If $W$ attains its maximum on $\pa\om_{n,\va}$, we are done. Suppose $W$ attains its maximum at an interior point $x_0$ in $\om_{n,\va}$. Let $T$ be the linear transformation
\[
\begin{split}
y_i&=x_i,\quad i=1,\cdots, n-1,\\
y_n&=x_n-\sum_{i=1}^{n-1}\frac{u_{in}(x_0)}{u_{nn}(x_0)}x_i,
\end{split}
\]
and $v(x)=u(Tx)$. Let
\[
\tilde W=(x_n-\sum_{i=1}^{n-1}\frac{u_{in}(x_0)}{u_{nn}(x_0)}x_i -\va)v_{nn}e^{\frac 12 v_n^2},
\]
for $x\in T^{-1}(\om_{n,\va})$. Then $\tilde W$ attains its maximum at $\tilde x_0=T^{-1}(x_0)$ in $T^{-1}(\om_{n,\va})$ and $v_{in}(\tilde x_0)=0$ for $i=1,\cdots, n-1$. Let $O=(O_{ij})$ be an orthogonal rotation which fixes $x_n$ such that $O^t\nabla^2 v(\tilde x_0)O$ is diagonal. Let $w(x)=v(Ox)$ and
\[
\overline W=\rho(x)v_{nn}e^{\frac 12 v_n^2},
\]
where $\rho(x)=(x_n-\sum_{i,j=1}^{n-1}\frac{u_{in}(x_0)}{u_{nn}(x_0)}O_{ij}x_j -\va)$. Then $\overline W$ achieves its maximum at $\bar x_0= O^{-1}T^{-1}(x_0)$ in $ O^{-1}T^{-1}(\om_{n,\va})$. By the same arguments as above, we obtain, at $\bar x_0$,
\[
0\ge -\frac{\rho_n^2}{\rho^2 w_{nn}}+w_{nn}+\frac{\pa_{nn}\log f}{w_{nn}}+w_n\pa_n\log f.
\]
Hence, $\rho(\bar x_0) v_{nn}(\bar x_0)\le C$, and thus, $W\le C$, where $C>0$ depends only on $n$, $diam(\om)$, $\|\nabla \log f\|_{L^{\infty}(\om)}$, $\|\nabla^2 \log f\|_{L^{\infty}(\om)}$, $\|\nabla u\|_{L^{\infty}(\om)}$ and $\|\nabla^2 u\|_{L^{\infty}(\pa\om)}$. So we can conclude that $(x_n-\va)u_{nn}\le C$ in $\om_{n,\va}$.

By sending $\va\to 0$, we have that for all $x\in \om$ with $x_n>0$,
\[
|\nabla^2 u(x)|\le C/x_n.
\]
Similarly, we can show that for all $x\in \om\setminus\overline\Gamma$, we have
\[
|\nabla^2 u(x)|\le \frac{C}{dist(x,\Gamma)},
\]
where $C>0$ depends only on $n$, $diam(\om)$, $\|\nabla \log f\|_{L^{\infty}(\om)}$, $\|\nabla^2 \log f\|_{L^{\infty}(\om)}$, $\|\nabla u\|_{L^{\infty}(\om)}$ and $\|\nabla^2 u\|_{L^{\infty}(\pa\om)}$, but is independent of $x$.
\end{proof}

\begin{proof}[Proof of Theorem \ref{thm:asymptotical behavior}]
It is clear $u$ is locally strictly convex, and thus, smooth away from $\Gamma$. Hence, Theorem \ref{thm:asymptotical behavior} follows from Theorem \ref{thm:esti-line}.
\end{proof}

Finally, we study the Dirichlet problem with isolated and line singularities and show some explicit dependence of the constant $C$ in \eqref{eq:esti-line} from the give data.
\begin{thm} \label{thm:line}
Let $\om$ be a bounded strictly convex domain in $\R^n$ with $\pa \om \in C^4$ and $n\ge 2$, $f\in C^{1,1}(\overline \om)$, $f>0$ in $\overline \om$ and $\varphi\in C^4(\pa \om)$, $\Gamma\subset\subset\om$ be either a point or a straight line segment and $\mu$ be a finite Borel measure supported on $\Gamma$.  Let $u\in C(\overline \om)$ be the unique generalized convex solution of the Dirichlet problem
\be \label{eq:mae-line}
\begin{split}
\begin{aligned}
\det \nabla^2 u&=f+\mu&\quad& \mbox{in }\om,\\
u&=\varphi& \quad & \mbox{on } \pa \om.
\end{aligned}
\end{split}
\ee
Then $u\in C^{0,1}(\om) \cap C^{3,\al}_{loc}(\overline \om \setminus \Gamma )$ for any $\al\in (0,1)$. Moreover, we have
\be\label{eq:mae-line-esti}
|\nabla^2 u(x)|\leq \frac{C}{dist(x,\Gamma)},
\ee
where $C>0$ depends only on $\om, n, \min_{\overline \om}f,\|f\|_{C^{1,1}(\overline \om)}, \|\varphi\|_{C^4(\pa \om)}, \mu(\om)$, and $ dist(\Gamma,\pa \om)$.
\end{thm}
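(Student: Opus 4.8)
The plan is to reduce Theorem \ref{thm:line} to the a priori estimate of Theorem \ref{thm:esti-line} by approximating the singular measure $\mu$ with smooth densities and carefully tracking the dependence of constants. First I would establish existence, uniqueness, and global Lipschitz regularity of $u$: uniqueness follows from the comparison principle for Alexandrov solutions, while existence follows by the Perron method or by approximation, using as barriers the explicit radial solutions of $\det\nabla^2 w = c + a\delta_{x_0}$ constructed in Section \ref{sec2} (translated to be centered along $\Gamma$) together with smooth supersolutions coming from the classical Dirichlet problem $\det\nabla^2 \overline u = f$ in $\om$, $\overline u = \varphi$ on $\pa\om$. The Lipschitz bound $u\in C^{0,1}(\om)$ comes from convexity once we know $u$ is bounded between these barriers, and the constant depends only on the listed data; the interior smoothness $u\in C^{3,\al}_{loc}(\overline\om\setminus\Gamma)$ away from $\Gamma$ follows from Theorem \ref{thm:strict convexity} (strict convexity of $u$ off $\mathcal{C}(\Gamma)$, hence off $\Gamma$ in dimensions $n=3,4$, and more generally from \cite{CL} since $\varphi\in C^4\subset C^{1,\beta}$), combined with Caffarelli--Schauder-type interior estimates and the $C^4$ regularity of $\pa\om$ and $\varphi$ for estimates up to $\pa\om\setminus\Gamma$.

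The second and main step is the gradient-of-second-derivative estimate \eqref{eq:mae-line-esti}. Here I would mollify: replace $\mu$ by $\mu_\va = \mu * \rho_\va$ (or, since $\mu$ is supported on a segment, smear it into a thin tube of width $\va$ about $\Gamma$) to get $f_\va := f + \mu_\va \in C^{1,1}(\overline\om)$ with $f_\va > 0$, and let $u_\va\in C(\overline\om)$ solve $\det\nabla^2 u_\va = f_\va$ in $\om$, $u_\va = \varphi$ on $\pa\om$. By classical Caffarelli--Guan--Trudinger--Wang theory each $u_\va$ is smooth in $\overline\om$ (recall $\pa\om\in C^4$, $\varphi\in C^4$). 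The point is that the Pogorelov-type argument in the proof of Theorem \ref{thm:esti-line} was carried out on the sub-domains $\om_{n,\va'}=\{x_n\ge\va'\}$ and gives a bound on $(x_n-\va')|\nabla^2 u|$ depending on $n$, $\mathrm{diam}(\om)$, $\|\nabla\log f_\va\|_{L^\infty}$, $\|\nabla^2\log f_\va\|_{L^\infty}$, $\|\nabla u_\va\|_{L^\infty(\om)}$, and $\|\nabla^2 u_\va\|_{L^\infty(\pa\om)}$ --- but \emph{only on the part of the domain where $f_\va$ is smooth}, namely away from a neighborhood of $\Gamma$; hence those $\log f_\va$ norms can be taken over $\om\setminus\Gamma$ and are controlled by $\|f\|_{C^{1,1}(\overline\om)}$, $\min_{\overline\om} f$, and $\mathrm{dist}(\Gamma,\pa\om)$ uniformly in $\va$. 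The boundary Hessian bound $\|\nabla^2 u_\va\|_{L^\infty(\pa\om)}$ is controlled uniformly in $\va$ by the standard barrier construction for the Dirichlet problem, depending only on $\om$, $n$, $\min_{\overline\om} f$, $\|f\|_{C^{1,1}}$, $\|\varphi\|_{C^4}$, and $\mu_\va(\om)=\mu(\om)$ (the upper bound on the right-hand side total mass enters the barriers). The gradient bound $\|\nabla u_\va\|_{L^\infty(\om)}$ is uniform because $u_\va$ is squeezed between barriers as above.

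Having these uniform bounds, I would apply Theorem \ref{thm:esti-line} (or rather rerun its proof) to each $u_\va$ on $\om\setminus N_\delta(\Gamma)$ for each fixed $\delta>0$, obtaining $|\nabla^2 u_\va(x)|\le C/\mathrm{dist}(x,\Gamma)$ with $C$ independent of $\va$ and of $x$. Then let $\va\to 0$: $u_\va\to u$ locally uniformly (by the comparison principle applied to $f_\va\ge f_{\va'}$-type monotonicity, or by compactness of convex functions with uniform bounds and uniqueness of the limit Alexandrov solution), and since $\mu_\va\,dx\rightharpoonup \mu$ weakly the limit is the solution of \eqref{eq:mae-line}; the second-derivative bound passes to the limit on $\om\setminus\Gamma$ by interior elliptic estimates (the $u_\va$ are locally uniformly $C^{2}$ away from $\Gamma$). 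Finally one checks $C^{3,\al}_{loc}(\overline\om\setminus\Gamma)$ by bootstrapping with Schauder estimates up to the boundary, using $\pa\om\in C^4$ and $\varphi\in C^4$. The main obstacle I anticipate is making the uniform-in-$\va$ control of $\|\nabla^2 u_\va\|_{L^\infty(\pa\om)}$ genuinely depend only on $\mu(\om)$ and not on finer properties of $\mu_\va$: this requires the boundary barriers to be built from the total mass alone, which is exactly the kind of $L^1$-dependent subsolution constructed in the proof of Theorem \ref{existence1}, so I would import that construction (localized near $\pa\om$, where $f_\va\equiv f$ is smooth) to pin down the normal-derivative and tangential second-derivative bounds on $\pa\om$ uniformly.
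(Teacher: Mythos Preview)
Your proposal is correct and follows essentially the same approach as the paper: approximate $\mu$ by smooth densities $\eta_\va$ supported near $\Gamma$, obtain uniform-in-$\va$ Lipschitz bounds via barriers and uniform boundary $C^2$ bounds (the paper cites Guan's boundary estimate \cite{Guan}, using that $\det\nabla^2 u_\va = f$ near $\pa\om$), then apply the Pogorelov-type estimate of Theorem~\ref{thm:esti-line} on the half-domains away from $\Gamma$ and pass to the limit. The only cosmetic difference is the choice of lower barrier---the paper uses $\underline u = u_1 + A u_2$ with $u_1,u_2$ smooth Monge--Amp\`ere solutions and $A$ chosen via Alexandrov's maximum principle so that $\sup_\Gamma \underline u \le \inf_\Gamma u$, which is somewhat cleaner than importing the radial singular solutions or the $L^1$-subsolution of Theorem~\ref{existence1}.
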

\begin{proof} We divide the proof into several steps.

\medskip

\emph{Step 1.} $C^0$ estimate.

\medskip

Since $u$ is convex, $u\leq \max_{\pa \om}\varphi$. If $u\geq \min_{\pa \om}\varphi$, we are done. Otherwise, let
$D=\{u<\min_{\pa \om}\varphi\}\subset \om$. By the Alexandrov's maximum principle, we have
\[
|u(x)-\min_{\pa \om}\varphi|^n\leq C(n) diam(D)^{n-1} dist(x,\pa D)\Big(\int_{D} f\,\ud x+\mu(\om)\Big).
\]
Hence $|u|\leq C_0$ for some positive $C_0$ depending only on $\om, n, \mu(\om), \|\varphi \|_{L^\infty(\pa \om)}$ and $\|f \|_{L^\infty( \om)}$.

\medskip

\emph{Step 2.} $C^{0,1}$ estimate.

\medskip

Clearly, the harmonic extension $h$ of $\varphi$ in $\om$ provides an upper bound of $u$. We shall construct a function which provides a lower bound of $u$.
Let $u_1,u_2 \in C^{3,\al}(\overline \om)$ be the solutions (see \cite{CNS, TW3}) of
\[
\begin{split}
\det \nabla^2 u_1&=f \quad \mbox{in }\om,\\
u_1&=\varphi \quad  \mbox{on } \pa \om,
\end{split}
\]
and
\[
\begin{split}
\det \nabla^2 u_2&=1 \quad \mbox{in }\om,\\
u_2&=0 \quad  \mbox{on } \pa \om,
\end{split}
\]
respectively. Applying the Alexandrov's maximum principle to $u_2$, we see that there is a constant $A>0$ depending only on
$\om, n, \mu(\om), dist(\Gamma,\pa \om), \|\varphi \|_{L^\infty(\pa \om)}$ and $\|f \|_{L^\infty( \om)}$ such that for $\underline{u}(x):= u_1(x)+Au_2(x)$
\[
\sup_{\Gamma}\underline{u}\leq \inf_{\Gamma}u.
\]
On the other hand, $ \underline{u}=u$ on $\pa \om$ and $\det\nabla^2\underline{u} >f=\det\nabla^2u$ in $\om\setminus\Gamma$.
It follows from the comparison principle that $\underline{u}\leq u$ in $\om$. In conclusion, we have
\[
h=u=\underline{u}\mbox{ on } \pa\om\quad\mbox{ and } \quad  h\geq u \geq \underline{u} \mbox{ in }\om.
\]
Hence, for any $x\in \pa \om$,  $|\pa u(x)|\leq C$. Since $u$ is convex, $diam (\pa u(\om))\leq C_1$ for some
$C_1>0$ depending only on $\om, n, \mu(\om), dist(\Gamma,\pa \om), \min_{\overline \om}f, \|f\|_{C^{1,1}(\overline \om)}, \|\varphi\|_{C^4(\pa \om)}$.

\medskip

\emph{Step 3.} $C^2$ estimates for approximating solutions $u_\va$ on the boundary $\pa \om$.

\medskip

Let us consider the following approximating problem
\[
\begin{split}
\begin{aligned}
\det \nabla^2 u_\va&=f+\eta_\va(x) &\quad &\mbox{in }\om,\\
u_\va&=\varphi& \quad & \mbox{on } \pa \om,
\end{aligned}
\end{split}
\]
where $\eta_\va$ is nonnegative and smooth, supp$(\eta_\va)\subset \subset Q_\va(\Gamma):=\{x\in\om: dist(x,\Gamma)< \va\}$ and $\eta_\va\rightharpoonup \mu$ weakly as $\va\to 0$. We may also assume $f$ is smooth. Then, up to a subsequence, $u_\va\to u$ in $C^0_{loc}(\om)$.
 Let $\theta=\frac{1}{10}dist(\Gamma,\pa \om)$.  As in step 2, we can construct a subsolution $\underline{u}$ such that
\[
\begin{split}
\begin{aligned}
\det \nabla^2 \underline{u}&\geq f+A& \quad &\mbox{in }\om,\\
\underline{u}&=\varphi& \quad & \mbox{on } \pa \om,
\end{aligned}
\end{split}
\]
and $\underline{u}\leq u-\theta$ in $\pa Q_\theta(\Gamma)$. Hence, we have $\underline{u}\leq u_\va$ on $\pa Q_\theta(\Gamma)$
for small $\va$. By the comparison principle,
\[
\underline{u}\leq u_\va\leq h\quad \mbox{in } \om\setminus Q_\theta(\Gamma)\quad\mbox{for all small }\va.
\]
Hence, $|u_\va|_{C^{0,1}(\om)}$ is uniformly bounded, and thus, $u_\va\to u$ in $C^0(\overline\om)$. Furthermore, since
\[
\begin{split}
\det \nabla^2 u_\va&= f \quad \mbox{in }\om\setminus Q_\theta(\Gamma),\\
u_\va&=\varphi \quad  \mbox{on } \pa \om
\end{split}
\]
for small $\va$, the $C^2$ boundary estimate in Theorem 2.1  of \cite{Guan} gives
\[
|\nabla^2 u_\va |\leq C \quad \mbox{on }\pa\om,
\]
where $C>0$ depends only on $\om, n, \mu(\om), \min_{\overline \om}f,\|f\|_{C^{1,1}(\overline \om)}, \|\varphi\|_{C^4(\pa \om)}$  and $ dist(\Gamma,\pa \om)$.

\medskip

\emph{Step 4.} $C^2$ estimates for $u_\va$ away from $\Gamma$ and complete the proof.

\medskip

It follows from Theorem \ref{thm:esti-line} and the above three steps that for $\tau>0$ we have
\[
|\nabla^2 u_\va(x)| \leq \frac{C}{dist(x, \Gamma)}\quad\forall x\in \om\setminus\overline{Q_\tau(\Gamma)}
\]
if $\va$ is sufficiently small, where $C>0$ depends only on $\om$, $n$, $\mu(\om)$, $\min_{\overline \om}f$, $\|f\|_{C^{1,1}(\overline \om)}$, $\|\varphi\|_{C^4(\pa \om)}$  and $ dist(\Gamma,\pa \om)$. Sending $\va\to 0$ first and then sending $\tau\to 0$, we have
\[
|\nabla^2 u(x)| \leq \frac{C}{dist(x, \Gamma)}\quad\forall x\in \om\setminus\Gamma.
\]

The rest of the theorem follows from Evans-Krylov theorem and Schauder estimates of elliptic equations. In conclusion, we complete the proof.
\end{proof}

\begin{rem}\label{rem:convex set}
In fact, both Theorem \ref{thm:esti-line} and Theorem \ref{thm:line} also hold for $\Gamma$ being a convex set, which follows from the same proofs as above.
\end{rem}

\small

\bigskip

\noindent Tianling Jin

\noindent Department of Mathematics, The University of Chicago\\
5734 S. University Avenue, Chicago, IL, 60637 USA\\[1mm]
Email: \textsf{tj@math.uchicago.edu}

\medskip

\noindent Jingang Xiong

\noindent Beijing International Center for Mathematical Research, Peking University\\
Beijing 100871, China\\[1mm]
Email: \textsf{jxiong@math.pku.edu.cn}

\end{document}